\DeclareSymbolFont{cyrletters}{OT2}{wncyr}{m}{n}
\DeclareMathSymbol{\Sha}{\mathalpha}{cyrletters}{"58}
\newtheorem{prop}{Proposition}
\newtheorem{thm}{Theorem}
\newtheorem{lemma}{Lemma}
\newtheorem{definition}{Definition}
\DeclareMathOperator*{\argmin}{arg\,min}
\newcolumntype{Y}{>{\centering\arraybackslash}X}
\newcolumntype{M}[1]{>{\centering\arraybackslash}m{#1}}
\newcommand\Tstrut{\rule{0pt}{3.6ex}}
\newcommand\Bstrut{\rule[-2ex]{0pt}{0pt}}
\def\mytitle{Periodic Splines and Gaussian Processes for the Resolution of Linear Inverse Problems}
\begin{document}

%%%%%%%%%%%%%%%%%%%%%%%%%%%%%%%%%%%%%%%%%%%%%%%%%%%%%%%%%%%%%%%%%%%%%%%%%%%%%%%%
\title{\mytitle}
\author{\thanks{Copyright (c) 2015 IEEE. Personal use of this material is
permitted. However, permission to use this material for any other purposes must
be obtained from the IEEE by sending a request to
pubs-permissions@ieee.org.}Ana\"{i}s Badoual$^{\ast}$, Julien Fageot$^{\ast}$, and Michael
Unser\thanks{The authors are with the
Biomedical Imaging Group, {\'{E}}cole polytechnique f{\'{e}}d{\'{e}}rale de Lausanne (EPFL),
Switzerland. This work was funded by the Swiss National Science
Foundation under Grant 200020-162343 and by the ERC grant agreement No 692726. 

$^\ast$ These authors contributed equally to this work.}}
\markboth{\mytitle}{Badoual {\textit{et al.}}: \mytitle}

%%%%%%%%%%%%%%%%%%%%%%%%%%%%%%%%%%%%%%%%%%%%%%%%%%%%%%%%%%%%%%%%%%%%%%%%%%%%%%%%
\maketitle

% %%%%%%%%%%%%%%%%%%%%%%%%%%%%%%%%%%%%%%%%%%%%%%%%%%%%%%%%%%%%%%%%%%%%%%%%%%%%%%%
\begin{abstract}
This paper deals with the resolution of inverse problems in a periodic setting or, in other terms, the reconstruction of periodic continuous-domain signals from their noisy measurements.
We focus on two reconstruction paradigms: variational and statistical.
In the variational approach, the reconstructed signal is solution to an optimization problem that establishes a tradeoff between fidelity to the data and smoothness conditions via a quadratic regularization associated to a linear operator.
In the statistical approach, the signal is modeled as a stationary random process defined from a Gaussian white noise and a whitening operator; one then looks for the optimal estimator in the mean-square sense.
We give a generic form of the reconstructed signals for both approaches,
allowing for a rigorous comparison of the two. 
We fully characterize the conditions under which the two formulations yield the same solution, which is a periodic spline in the case of sampling measurements. 
We also show that this equivalence between the two approaches remains valid on simulations for a broad class of problems.
This extends the practical range of applicability of the variational method.
\end{abstract}
%%%%%%%%%%%%%%%%%%%%%%%%%%%%%%%%%%%%%%%%%%%%%%%%%%%%%%%%%%%%%%%%%%%%%%%%%%%%%%%%
\begin{IEEEkeywords}
Periodic signals, variational methods, representer theorem, Gaussian processes, MMSE estimators, splines.
\end{IEEEkeywords}

% %%%%%%%%%%%%%%%%%%%%%%%%%%%%%%%%%%%%%%%%%%%%%%%%%%%%%%%%%%%%%%%%%%%%%%%%%%%%%%%
\section{Introduction}\label{sec:intro}
% %%%%%%%%%%%%%%%%%%%%%%%%%%%%%%%%%%%%%%%%%%%%%%%%%%%%%%%%%%%%%%%%%%%%%%%%%%%%%%%
This paper deals with inverse problems: one aims at recovering an unknown signal from its corrupted measurements.
To be more specific, the motivation of this work is the reconstruction of an unknown \emph{continuous-domain} and \emph{periodic} signal $f$ from its $M$ noisy measurements $y_m \approx \langle \nu_m, f \rangle = \int_0^1 \nu_m(t)f(t) \mathrm{d} t $ for $m=1\ldots M$, 
 where the $\nu_m$ are measurement functions.
The goal is then to build an output signal ${f}_{\mathrm{opt}}$ that is as close as possible to $f$. 

\subsection{Inverse Problems in the Continuous Domain} 

Inverse problems are often formulated in the discrete domain~\cite{Banham1997digital,Karayiannis1990regularization,Figueiredo2003algorithm,Afonso2011augmented,Bertero1998introduction}.	
This is motivated by the need of manipulating digital data on computers.
Nevertheless, many naturally occurring signals depend on continuous variables (\emph{e.g.}, time or position). This leads us to attempt recovering a signal $f_{\mathrm{opt}}(t)$ that depends on the continuous variable $t \in [0,1]$. In contrast with the classical discrete setting, our search space for this reconstructed signal is thus infinite-dimensional~\cite{Adcock2015generalized}. Moreover, we choose a regularization based on true derivatives (as opposed to finite differences) to impose some smoothness on the reconstructed signal, a concept that is absent in the discrete setting. 
	
When considering continuous-domain reconstruction methods, a majority of works, typically in machine learning, deal with sampling measurements. The goal is then to recover $f$ from its (possibly noisy) values $y_m\approx f(t_m)$ at fixed location $t_m$. 
In order to investigate a more general version of inverse problems, we shall consider generalized measurements~\cite{Papoulis1977generalized,Eldar2006minimum}. They largely exceed the sampling case and include Fourier sampling or convolution (\emph{e.g.}, MRI, x-ray tomography~\cite{Piccolomini2002regularization, Bostan2013Sparse}). 
Our only requirement is that the measurements $y_m$ depend linearly on, and evolve continuously with, the unknown signal $f$ up to some additive noise, so that $y_m \approx \langle \nu_m, f \rangle$.

\subsection{Variational vs. Statistical Methods}

In the discrete domain, two standard strategies are used to reconstruct 
an input signal $\bf{x}$ from its noisy measurements $\bf{y} \approx \mathbf{H} x$, where $\mathbf{H}$ models the acquisition process~\cite{Bertero1998introduction}.
The first approach is deterministic and can be tracked back to the '60s with Tikhonov's seminal work~\cite{Tikhonov1963solution}.
The ill-posedness of the problem usually imposes the addition of a regularizer.
By contrast, Wiener filtering is based on the stochastic modelization of the signals of interest and the optimal estimation of the targeted signal $\bf{x}$.
This paper generalizes these ideas for the reconstruction of \emph{continuous} signals from their \emph{discrete} measurements.

In the variational setting, the reconstructed signal is a solution to an optimization problem that imposes some smoothness conditions~\cite{cassel2013variational}. More precisely, the optimization problem may take the form 
\begin{equation}\label{eq: intro}
{f}_{\mathrm{opt}} = \argmin_f \bigg( \sum_{m=1}^M  \big(y_m -\langle \nu_m, f \rangle \big)^2 + \lambda \| \mathrm{L}f\| ^2_{L_2} \bigg),
\end{equation}
where $\mathrm{L}$ is a linear operator.
The first term in~\eqref{eq: intro} controls the data fidelity. The regularization term $\| \mathrm{L}f\| ^2_{L_2} $ constrains the function to satisfy certain smoothness properties (for this reason, the variational approach is sometimes called a smoothing approach).
The parameter $\lambda$ in~\eqref{eq: intro} quantifies the tradeoff between the fidelity to the data and the regularization constraint.

In the statistical setting, the signal is modeled as a random process and is optimally reconstructed using estimation theory~\cite{Moon2000mathematical}. 
More precisely, one assumes that the continuous-domain signal is the realization of a stochastic process $s$ and that the samples are given by $y_m= \langle \nu_m , s \rangle + \epsilon_m$, where $\epsilon_m$ is a random perturbation and $\nu_m$ a linear measurement function.
In this case, one specifies the reconstructed signal as the optimal statistical estimator in the mean-square sense
\begin{equation}
	{f}_{\mathrm{opt}}  = \argmin_{\tilde{s}} \mathbb{E} \left[ \lVert s - \tilde{s}(\cdot | \mathbf{y}) \rVert_{L_2}^2 \right],
\end{equation}
where the estimators $t \mapsto \tilde{s}(t | \mathbf{y})$ are computed from the generalized samples $y_m$. The solution depends on the measurement function $\nu_m$ and the stochastic models specified for $s$ and $\epsilon_m$. In our case, the random process $s$ is characterized by a linear operator $\mathrm{L}$ that is assumed to have a whitening effect (it transforms $s$ into a periodic Gaussian white noise), while the perturbation is i.i.d. Gaussian.

\subsection{Periodic and General Setting}
The variational and statistical approaches have been extensively studied for continuous-domain signals defined on the infinitely supported real line. 
However, it is often assumed in practice that the input signals are periodic. 
In fact, a standard computational approach to signal processing is to extend by periodization the signals of otherwise bounded support. 
Periodic signals arise also naturally in applications such as the parametric representation of closed curves~\cite{Cohen1994part, Delgado2012ellipse,Badoual2017subdivision}. 
This has motivated the development of signal-processing tools and techniques specialized to periodic signals in sampling theory, error analysis, wavelets, stochastic modelization, or curve representation~\cite{Vetterli2002FRI,Maravic2005sampling,blu2008sparse,Jacob2002sampling,Triebel2008function, Fageot2017besov,Badoual2016inner}. 

In this paper, we develop the theory of the variational and statistical approaches for periodic continuous-domain signals in a very general context, including the following aspects:
\begin{itemize}[leftmargin=10 pt]
\item We consider a broad class of measurement functions, with the only assumptions that they are linear and continuous. 

\item Both methods refer to an underlying linear operator $\mathrm{L}$ that affects the smoothness properties of the reconstruction. We deal with a very broad class of linear operators acting on periodic functions. 

\item We consider possibly non-quadratic data fidelity terms in the smoothing approach.
\end{itemize}

	\subsection{Related Works}
The topics investigated in this paper have already received some attention in the literature, mostly in the non-periodic setting.
	
	\paragraph{Reconstruction over the Real Line}
	%variational
Optimization problems of the form~\eqref{eq: intro} appear in many fields and receive different names, including inverse problems in image processing~\cite{Bertero1998introduction}, representer theorems in machine learning~\cite{Scholkopf2001generalized}, or sometimes interpolation elsewhere.
Schoenberg was the first to show the connection between~\eqref{eq: intro} and spline theory~\cite{Schoenberg1964spline}. 
Since then, this has been extended to other operators~\cite{Unser2007self}, or to the interpolation of the derivative of the signal~\cite{Uhlmann2016hermite,Condat2011quantitative}.
Many recent methods are dealing with non-quadratic regularization, especially the ones interested in the reconstruction of sparse discrete~\cite{Candes2006sparse,Donoho2006} or continuous signals~\cite{Adcock2015generalized,Denoyelle2015support,Unser2016splines,gupta2018continuous}. 
We discuss this aspect more extensively in Section~\ref{sec:TV}.

	%MMSE: optimality of splines
A statistical framework requires the specification of the noise and of the signal stochastic model. The signal is then estimated from its measurements. A classical measure of the quality of an estimator is the mean-square error. This criterion is minimized by the minimum mean-square error (MMSE) estimator~\cite{Moon2000mathematical,Tarantola2005inverse}.
The theory has been developed mostly for Gaussian processes and in the context of sampling measurements~\cite{Berlinet2011reproducing}. We are especially interested in innovation models, for which one assumes that the signal can be whitened (\emph{i.e.}, transformed into a white noise) by the application of a linear operator~\cite{Kailath1968innovationsA,Unser2014unifiedContinuous}. Non-periodic models have been studied in many situations, including the random processes associated with differential~\cite{Kimeldorf1970spline, Uhlmann2015SampTA} or fractional operators~\cite{Blu2007self}. Extensions to non-Gaussian models are extensively studied by Unser and Tafti~\cite{Unser2014sparse}. 
	
	%connection
The statistical and variational frameworks are deeply connected. 
It is remarkable that the solution of either problem can be expressed as spline functions in relation with the linear operator $\mathrm{L}$ involved in regularization (variational approach) or whitening (statistical approach).
Wahba has shown that the two approaches are strictly equivalent in the case of stationary Gaussian models~\cite{Wahba1990spline}. This equivalence has also been recognized by several authors since then, as shown by Berlinet and Thomas-Agnan~\cite{Berlinet2011reproducing}, and Unser and Blu~\cite{Unser2005generalized}.
In the non-stationary case, this equivalence is not valid any more and the existence of connections has received less attention. 

	\paragraph{Reconstruction of Periodic Signals}
	%periodic
Some strong practical concerns have motivated the need for an adaptation of the theory to the periodic setting.
Important contributions in that direction have been proposed.
Periodic splines are constructed and applied to sampling problems by Schoenberg~\cite{Schoenberg1964trigonometric} and Golomb~\cite{Golomb1968approximation}. The smoothing spline approach is studied in the periodic setting by Wahba~\cite{Wahba1990spline} for derivative operators of any order. Although the periodic extension of the classical theory is briefly mentioned by several authors~\cite{Berlinet2011reproducing,Wahba1990spline,de1978practical}, we are not aware of a global treatment. Providing a general analysis in the periodic setting is precisely what we propose in this paper. 
	
	\subsection{Outline and Main Contributions}
Section~\ref{sec:mathematical_background} contains the main notations and tools for periodic functions and operators.
In Section~\ref{sec:representer_th}, we state the periodic representer theorem (Theorem~\ref{th: periodic representer thm}). It fully specifies the form of the solution in the variational approach in a very general setting. For the specific case of sampling measurements, we show that this solution is a periodic spline (Proposition~\ref{prop: Spline}).
Section~\ref{sec:processes_MMSE} is dedicated to the statistical approach. We introduce a class of periodic stationary processes (the \emph{Gaussian bridges}) for which we specify the MMSE estimator in the case of generalized linear measurements (Theorem~\ref{th: MMSE}). We also provide a theoretical comparison between the variational and statistical approaches by reformulating the MMSE estimation as the solution of a new optimization problem (Proposition~\ref{prop: opt pb}). This highlights the strict equivalence of the two approaches for invertible operators and extends known results from sampling to generalized linear measurements.
For non-invertible operators, we complete our analysis with simulations in Section~\ref{sec:simulations}. In particular, we give empirical evidence of the practical relevance of the variational approach for the reconstruction of periodic stationary signals.
We provide in Section~\ref{sec:periodicvsline} a comparison between our results in the periodic setting and the known results over the real line.
Finally, we conclude in Section~\ref{sec:conslusions}.
All the proofs have been postponed to the Appendix sections.

% %%%%%%%%%%%%%%%%%%%%%%%%%%%%%%%%%%%%%%%%%%%%%%%%%%%%%%%%%%%%%%%%%%%%%%%%%%%%%
\section{Mathematical Background for Periodic Signals}\label{sec:mathematical_background}
% %%%%%%%%%%%%%%%%%%%%%%%%%%%%%%%%%%%%%%%%%%%%%%%%%%%%%%%%%%%%%%%%%%%%%%%%%%%%%%%

%\subsection{Notations}
Throughout the paper, we consider periodic functions and random processes. Without loss of generality, the period can always be normalized to one. Moreover, we identify a periodic function over $\mathbb{R}$ with its restriction to a single period, chosen to be $\mathbb{T} = [0,1)$.
We use the symbols $f$, $s$, and $\tilde{s}$ to specify a function, a random process, and an estimator of $s$, respectively. 

We call $\mathcal{S}(\mathbb{T})$ the space of $1$-periodic functions that are infinitely differentiable, $\mathcal{S}^{\prime}(\mathbb{T})$ the space of $1$-periodic generalized functions (dual of $\mathcal{S}(\mathbb{T})$), and $L_2(\mathbb{T})$ the Hilbert space of square integrable $1$-periodic functions associated with the norm ${\lVert f \rVert_{L_2} = (\int_0^1 \lvert f(t) \rvert^2 \mathrm{d} t )^{1/2}}$. Working with $\mathcal{S}'(\mathbb{T})$ allows us to deal with functions with no pointwise interpretation, such as the Dirac comb defined by
\begin{equation}
\Sha=\sum\limits_{k \in \mathbb{Z}}\delta(\cdot -k),
\end{equation} 
where $\delta$ is the Dirac impulse. 
The duality product between an element $f \in \mathcal{S}'(\mathbb{T})$ and a smooth function $g \in \mathcal{S}(\mathbb{T})$ is denoted by $\langle f, g \rangle$. For instance, $\langle \Sha , g \rangle = g(0)$ for every $g$.
When the two real functions are in $L_2(\mathbb{T})$, we simply have the usual scalar product $\langle f , g \rangle = \int_{0}^1 f(t) {g(t)} \mathrm{d} t$.
All these concepts are extended to complex-valued functions in the usual manner with the convention that $\langle f , g \rangle = \int_{0}^1 f(t) \overline{g(t)} \mathrm{d}t$ for square-integrable functions.
The complex sinusoids are denoted by $e_k(t)=\mathrm{e}^{\mathrm{j} 2\pi k t}$ for any $k \in \mathbb{Z}$ and $t \in \mathbb{T}$. 
Any periodic generalized function $f \in \mathcal{S}'(\mathbb{T})$ can be expanded as
\begin{equation}\label{eq: Fourier serie}
f(t)=\sum_{k\in \mathbb{Z}}\widehat{f}[k] \mathrm{e}^{\mathrm{j} 2\pi k t}=\sum_{k\in \mathbb{Z}}\widehat{f}[k] e_k(t),
\end{equation}
where the $\widehat{f}[k]$ are the Fourier coefficients of $f$, given by
$\widehat{f}[k] = \langle f , e_k \rangle$. Finally, the convolution between two periodic functions $f$ and $g$ is given by
\begin{equation}\label{eq: periodic convolution}
(f \ast g)(t)=\langle f, g(t-\cdot)\rangle.
\end{equation}
If $f, g \in L_2(\mathbb{T})$, we have that $(f \ast g)(t)=\int_0^1f(\tau)g(t-\tau)\mathrm{d}\tau$.

\subsection{Linear and Shift-Invariant Operators} \label{subsec:LSIop}
Let $\mathrm{L}$ be a linear, shift-invariant (LSI), and continuous operator from $\mathcal{S}(\mathbb{T})$ to $\mathcal{S}'(\mathbb{T})$.
The shift invariance implies the existence of $\widehat{L}[k] \in \mathbb{C}$ such that
\begin{equation}\label{eq: LSI}
\mathrm{L}e_k=\widehat{L}[k]e_k,
\end{equation}
for any $k \in \mathbb{Z}$. We call $\widehat{L}[k]$ the frequency response of the operator $\mathrm{L}$; it is also given by
\begin{equation}
\widehat{L}[k]= \langle \mathrm{L} \{\Sha\}, e_k \rangle = \int_0^1 \mathrm{L}\{\Sha\}(t)\mathrm{e}^{-\mathrm{j} 2\pi k t} \mathrm{d}t.
\end{equation}

The sequence $(\widehat{L}[k])$ is the Fourier series of the periodic generalized function $\mathrm{L}\{\Sha\}$, and is therefore of slow growth~\cite[Chapter VII]{Schwartz1966distributions}.
This implies that $\mathrm{L}$, a priori from $\mathcal{S}(\mathbb{T})$ to $\mathcal{S}'(\mathbb{T})$, actually continuously maps $\mathcal{S}(\mathbb{T})$ into itself. This is a significant difference with the non-periodic setting --- we discuss this point in the conclusion in Section~\ref{sec:conslusions}. Therefore, one can extend it by duality from $\mathcal{S}^{\prime}(\mathbb{T})$ to $\mathcal{S}^{\prime}(\mathbb{T})$. Then, for every $f \in \mathcal{S}^{\prime}(\mathbb{T})$, we easily obtain from~\eqref{eq: LSI} that
\begin{equation}
\mathrm{L}f(t)=\sum_{k\in \mathbb{Z}}\widehat{(\mathrm{L}f)}[k]e_k(t), \ \mbox{where } \widehat{(\mathrm{L}f)}[k]=\widehat{f}[k]\widehat{L}[k].
\end{equation}

The null space of $\mathrm{L}$ is $\mathcal{N}_{\mathrm{L}} = \{ f \in \mathcal{S}'(\mathbb{T}) \ | \ \mathrm{L} f = 0\}$.
We shall only consider operators whose null space is finite-dimensional, in which case $\mathcal{N}_{\mathrm{L}}$ can only be made of linear combinations of sinusoids at frequencies that are annihilated by $\mathrm{L}$. 
We state this fact in Proposition~\ref{prop: null space} and prove it in Appendix~\ref{app: null space}.

\begin{prop}\label{prop: null space}
Let $\mathrm{L}$ be a continuous LSI operator. If $\mathrm{L}$ has a finite-dimensional null space $\mathcal{N}_{\mathrm{L}}$ of dimension $N_0$, then the null space is of the form
\begin{equation}\label{eq: null space}
\mathcal{N}_{\mathrm{L}}=\mathrm{span}\{e_{k_n}\}_{n=1}^{N_0}, 
\end{equation}
where the $k_n \in \mathbb{Z}$ are distinct.
\end{prop}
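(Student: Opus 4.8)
The plan is to exploit the diagonalization of $\mathrm{L}$ in the Fourier basis, which reduces the problem to a statement about sequences. First I would take an arbitrary $f \in \mathcal{N}_{\mathrm{L}}$, expand it as $f = \sum_{k\in\mathbb{Z}} \widehat{f}[k]\, e_k$ as in~\eqref{eq: Fourier serie}, and apply $\mathrm{L}$ term by term using the identity $\widehat{(\mathrm{L}f)}[k] = \widehat{f}[k]\,\widehat{L}[k]$ established just before the statement. Since the Fourier coefficients of the generalized function $\mathrm{L}f$ vanish if and only if $\mathrm{L}f = 0$, membership $f \in \mathcal{N}_{\mathrm{L}}$ is equivalent to the pointwise-in-$k$ condition $\widehat{f}[k]\,\widehat{L}[k] = 0$ for every $k \in \mathbb{Z}$. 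Hence the support of $(\widehat{f}[k])_k$ is contained in the zero set $Z = \{k \in \mathbb{Z} : \widehat{L}[k] = 0\}$.

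Next I would argue that $Z$ must be finite. If $Z$ were infinite, then for each $k \in Z$ the sinusoid $e_k$ itself satisfies $\mathrm{L}e_k = \widehat{L}[k] e_k = 0$ by~\eqref{eq: LSI}, so $e_k \in \mathcal{N}_{\mathrm{L}}$; an infinite family of distinct sinusoids $\{e_k\}_{k\in Z}$ is linearly independent (e.g.\ by orthogonality in $L_2(\mathbb{T})$), which would force $\dim \mathcal{N}_{\mathrm{L}} = \infty$, contradicting the hypothesis $N_0 < \infty$. Write $Z = \{k_1, \dots, k_{N_1}\}$ with the $k_n$ distinct. From the previous paragraph, every $f \in \mathcal{N}_{\mathrm{L}}$ has its Fourier expansion supported on $Z$, i.e.\ $f = \sum_{n=1}^{N_1} \widehat{f}[k_n]\, e_{k_n}$, so $\mathcal{N}_{\mathrm{L}} \subseteq \mathrm{span}\{e_{k_n}\}_{n=1}^{N_1}$. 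Conversely each $e_{k_n}$ lies in $\mathcal{N}_{\mathrm{L}}$, so the reverse inclusion holds and $\mathcal{N}_{\mathrm{L}} = \mathrm{span}\{e_{k_n}\}_{n=1}^{N_1}$. Since the $e_{k_n}$ are linearly independent, counting dimensions gives $N_1 = N_0$, which is exactly~\eqref{eq: null space}.

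The one point that needs a little care — and which I expect to be the main (though modest) obstacle — is the justification that applying $\mathrm{L}$ commutes with the infinite Fourier sum for a general $f \in \mathcal{S}'(\mathbb{T})$, and that a generalized function with all Fourier coefficients zero is zero. Both facts are standard: the continuity of $\mathrm{L}$ on $\mathcal{S}'(\mathbb{T})$ (noted in the excerpt, since $(\widehat{L}[k])$ is of slow growth and $\mathrm{L}$ maps $\mathcal{S}(\mathbb{T})$ into itself) lets us pass $\mathrm{L}$ through the series, and the Fourier coefficients $\widehat{g}[k] = \langle g, e_k\rangle$ determine $g \in \mathcal{S}'(\mathbb{T})$ uniquely because $\mathcal{S}(\mathbb{T})$ is spanned (in the topological sense) by the $e_k$. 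With these in hand the argument is entirely elementary. I would therefore state these two facts explicitly at the start of the proof, cite them to the distribution-theory reference already used in the paper, and then run the Fourier-support argument above.
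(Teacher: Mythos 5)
Your proof is correct, but it follows a genuinely different route from the paper's. You lean on the Fourier-multiplier identity $\widehat{(\mathrm{L}f)}[k]=\widehat{f}[k]\widehat{L}[k]$, valid for every $f\in\mathcal{S}'(\mathbb{T})$ (which the paper does establish in Section~\ref{subsec:LSIop}, before the proposition, via the slow growth of $(\widehat{L}[k])$ and the duality extension of $\mathrm{L}$ — so there is no circularity), and you reduce the statement to a support argument: $f\in\mathcal{N}_{\mathrm{L}}$ iff $\widehat{f}$ is supported on the zero set $Z=\{k:\widehat{L}[k]=0\}$, and $Z$ must be finite since each $e_k$, $k\in Z$, lies in the null space and these are linearly independent. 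The paper instead runs the classical operator-level argument: shift-invariance plus closedness of the finite-dimensional null space shows $\mathcal{N}_{\mathrm{L}}$ is stable under differentiation, so $p,p^{(1)},\dots,p^{(N_0)}$ are linearly dependent and every null-space element solves a constant-coefficient ODE, whose $1$-periodic solutions are exactly integer-frequency sinusoids. Your version is shorter and more transparent once the Fourier machinery is in place, and it identifies the null-space frequencies directly as the zeros of $\widehat{L}$; the paper's version mirrors the standard non-periodic proof and does not need term-by-term application of $\mathrm{L}$ to the Fourier series of a generalized function. The two caveats you flag (commuting $\mathrm{L}$ with the series, and injectivity of the Fourier coefficient map on $\mathcal{S}'(\mathbb{T})$) are indeed the only points requiring justification, and both are standard facts you can cite as you propose.
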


\smallskip
\noindent From~\eqref{eq: LSI} and~\eqref{eq: null space}, we deduce that $\widehat{L}[k]=0$ if and only if $k=k_n$ for some $n\in [1 \ldots N_0]$.
In the following, we consider real-valued operators. In that case, we have the Hermitian symmetry
$ \overline{\widehat{L}[-k]}={\widehat{L}}[k]$. Moreover, $e_{k_n} \in \mathcal{N}_{\mathrm{L}}$ if and only if $e_{-k_n} \in \mathcal{N}_{\mathrm{L}}$.
The orthogonal projection of $f$ on the null space $\mathcal{N}_{\mathrm{L}}$ is given by
\begin{equation}\label{eq: Proj null space}
\mathrm{Proj}_{\mathcal{N}_{\mathrm{L}}} \{f\}=\sum_{n=1}^{N_0}\widehat{f}[k_n]e_{k_n}.
\end{equation}

Let ${\mathcal{K}_{\mathrm{L}}=\mathbb{Z}\backslash \{k_n\}_{ n \in \{1 \ldots N_0\}}}$. Then,~\eqref{eq: Fourier serie} can be re-expressed as
${f=\mathrm{Proj}_{\mathcal{N}_{\mathrm{L}}} \{f\}+ \sum_{k \in \mathcal{K}_{\mathrm{L}}} \widehat{f}[k]e_k}$ and we have that
$	\mathrm{L} f (t)= \sum_{k \in \mathcal{K}_{\mathrm{L}}} \widehat{f}[k] \widehat{L}[k] e_k(t),
$ which yields the Parseval relation
\begin{equation}\label{eq: Parseval}
\int_0^1| \mathrm{L}f (t)|^2 \mathrm{d}t=\sum_{k \in \mathcal{K}_{\mathrm{L}}} \big | \widehat{f}[k] \big |^2 \big |\widehat{L}[k] \big |^2 .
\end{equation}

\subsection{Periodic $\mathrm{L}$-Splines} \label{sec : periodic splines}
Historically, splines are functions defined to be piecewise polynomials~\cite{Schoenberg1973cardinal}. A spline is hence naturally associated to the derivative operator of a given order~\cite{Unser1999splines} in the sense that, for a fixed $N\geq 1$, a spline function $f : \mathbb{R}\rightarrow \mathbb{R}$ satisfies ${\mathrm{L} f (t) = \sum a_m \delta(t-t_m)}$ with $\mathrm{L} = \mathrm{D}^N$ the $N$th derivative.
Splines have been extended to differential~\cite{schumaker2007spline,Schultz1967Lsplines,Unser2005cardinal,Unser2005think}, fractional~\cite{Unser2007self,Panda2006fractional} or, more generally, spline-admissible operators~\cite{Unser2014sparse}.
We adapt here this notion to the periodic setting, where the Dirac impulse $\delta$ is replaced by the Dirac comb $\Sha$. 

\begin{definition} \label{def:Lspline}
Consider an LSI operator $\mathrm{L}$ with finite-dimensional null space.
We say that a function $f$ is a \emph{periodic $\mathrm{L}$-spline} if
\begin{equation}
\mathrm{L} f (t) = \sum_{m=1}^M a_m \Sha( t - t_m)
\end{equation}
for some integer $M \geq 1$, weights $a_m \in \mathbb{R}$, and knot locations $t_m \in \mathbb{T}$. 
\end{definition}

Periodic $\mathrm{L}$-splines play a crucial role in the variational and statistical approaches for the resolution of inverse problems in the periodic setting. We represent some periodic splines associated to different operators in Figure~\ref{fig: spline}. 
 \begin{figure}
 \centering
 \hspace*{-0.2cm}
 \subfigure[$\mathrm{L}=\mathrm{D}+\mathrm{I}$ and $M=4$]{\includegraphics[scale=0.32]{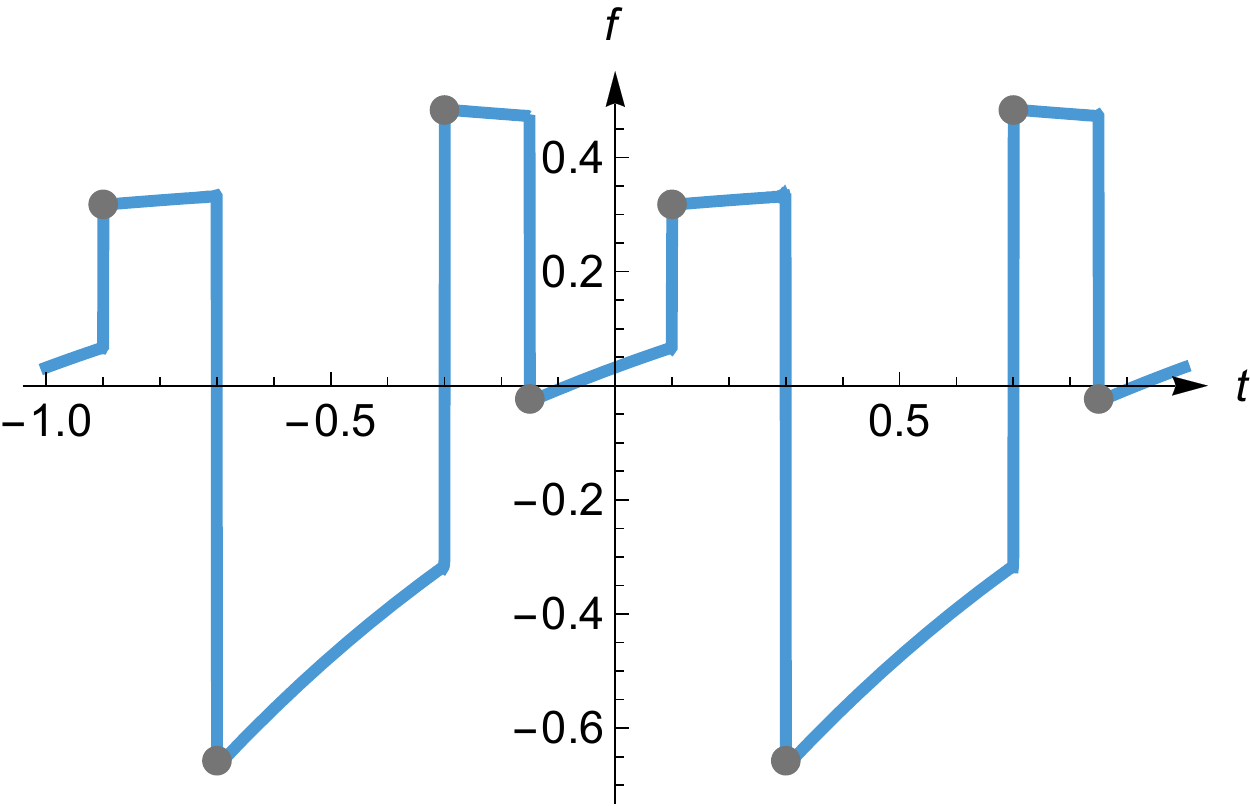}}
  \hspace*{+0.2cm}
  \subfigure[$\mathrm{L}=\mathrm{D}^2$ and $M=5$]{\includegraphics[scale=0.32]{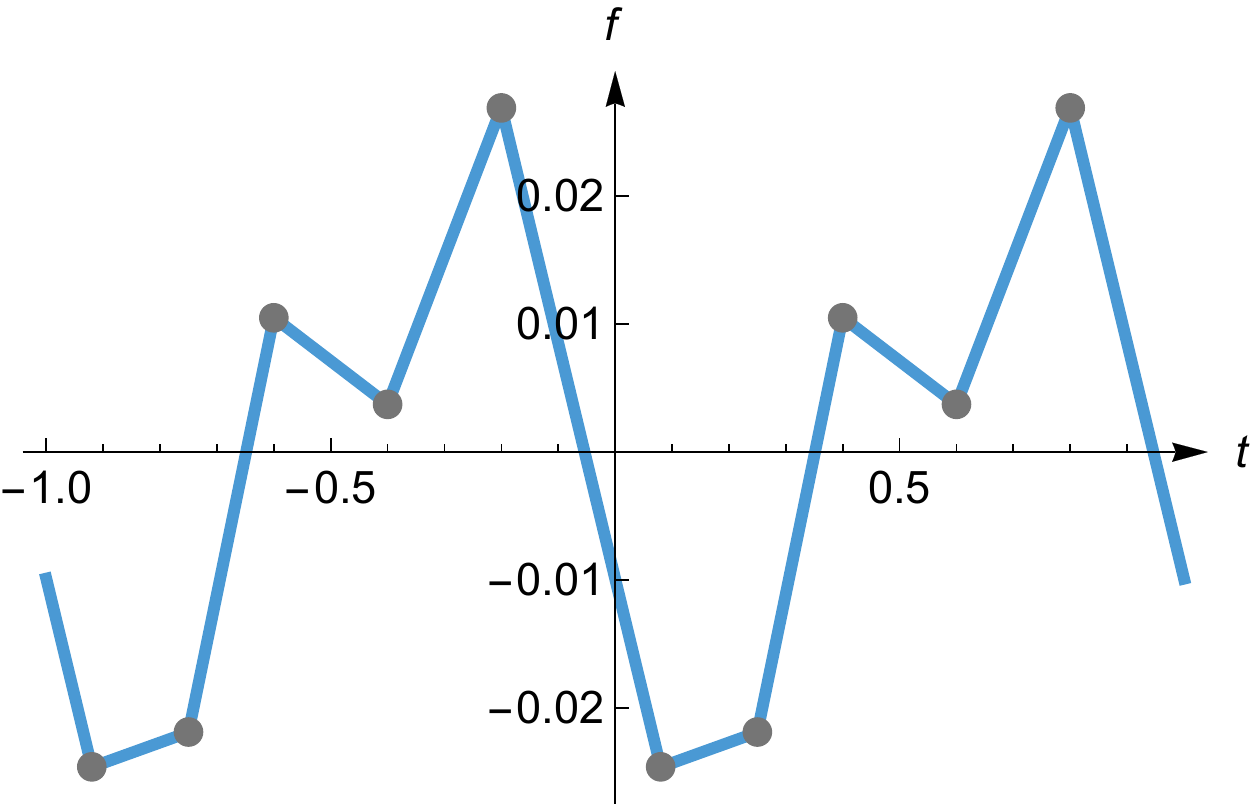}}
 \caption{Illustrations of periodic $\mathrm{L}$-splines. Dots: nodes $\big( t_m, f(t_m) \big)$. The spline in (a) corresponds to the periodization of an exponential B-spline (see Figure 1 in~\cite{Unser2005cardinal}). }
 \label{fig: spline}
 \end{figure}

% %%%%%%%%%%%%%%%%%%%%%%%%%%%%%%%%%%%%%%%%%%%%%%%%%%%%%%%%%%%%%%%%%%%%%%%%%%%%%
\section{Periodic Representer Theorem}\label{sec:representer_th}
% %%%%%%%%%%%%%%%%%%%%%%%%%%%%%%%%%%%%%%%%%%%%%%%%%%%%%%%%%%%%%%%%%%%%%%%%%%%%%%%
We now consider a continuous LSI operator $\mathrm{L}$ with finite-dimensional null space $\mathcal{N}_{\mathrm{L}}$. Let $\boldsymbol{\nu}$ be the vector of the linear measurement functions $\nu_1, \ldots, \nu_M$.
They usually are of the form $\nu_m = \delta(\cdot - t_m)$ for time-domain sampling problems. 
Here, we consider general linear measurements to include any kind of inverse problems.
 In this section, our goal is to recover a function $f$ from observed data $\mathbf{y}=(y_1, \ldots ,y_M)$ such that $y_m \simeq \langle \nu_m, f\rangle$.
To do so, we consider the variational problem
\begin{equation}\label{eq: opt pb}
\underset{f}{\min}\bigg( F(\mathbf{y},\boldsymbol\nu (f))+\lambda \| \mathrm{L}f\| _{L_2}^2\bigg),
\end{equation}
where $F: \mathbb{R}^M \times \mathbb{R}^M \rightarrow \mathbb{R}^+$ is a strictly convex and continuous function called the \textit{cost function}. This function controls the fidelity to data. A special attention will be given to the quadratic data fidelity of the form
\begin{equation}\label{eq: quadratic F}
F(\mathbf{y},\boldsymbol\nu (f))=\sum\limits_{m=1}^M (y_m-\langle \nu_m, f\rangle )^2.
\end{equation}
We give the solution of~\eqref{eq: opt pb} for the space of $1$-periodic functions in Theorem~\ref{th: periodic representer thm}. To derive this solution, we first introduce and characterize the space of functions on which~\eqref{eq: opt pb} is well-defined.

\subsection{Search Space} \label{sec:searchspace}
The optimization problem~\eqref{eq: opt pb} deals with functions such that $\mathrm{L} f$ is square-integrable, which leads us to introduce ${\mathcal{H}_{\mathrm{L}}=\{f \in \mathcal{S}^{\prime} (\mathbb{T}) \ | \  \mathrm{L}f \in L_2(\mathbb{T})\}}$. Due to~\eqref{eq: Parseval}, we have that
\begin{equation}\label{eq: search space}
\mathcal{H}_{\mathrm{L}}=\{f \in \mathcal{S}^{\prime} (\mathbb{T}) \ | \  \sum_{k\in \mathcal{K}_{\mathrm{L}}}|\widehat{f}[k]|^2|\widehat{L}[k]|^2 <  + \infty\}.
\end{equation}
Similar constructions have been developed for functions over $\mathbb{R}$ or for sequences by Unser \textit{et al.}~\cite{Unser2016splines,Unser2016representer}. 
We now identify a natural Hilbertian structure on $\mathcal{H}_{\mathrm{L}}$. If $\mathrm{L}: \mathcal{H}_{\mathrm{L}} \rightarrow L_2(\mathbb{T})$ is invertible, then $\mathcal{H}_{\mathrm{L}}$ inherits the Hilbert-space structure of $L_2$ via the norm $\|\mathrm{L}f\|_{L_2}$. However, when $\mathrm{L}$ has a nontrivial null space, $\|\mathrm{L}f \|_{L_2}$ is only a semi-norm, in which case there exists $f \neq 0$ (any element of the null space of $\mathrm{L}$) such that $\| \mathrm{L}f \|_{L_2}=0$. To obtain a \textit{bona fide} norm, we complete the semi-norm with a special treatment for the null-space components in Proposition~\ref{prop: inner product}.

\begin{prop}\label{prop: inner product}
Let $\mathrm{L}$ be a continuous LSI operator whose finite-dimensional null space is defined by ${\mathcal{N}_{\mathrm{L}}=\mathrm{span}\{e_{k_n}\}_{n=1}^{N_0}}$.
We fix $\gamma^2 > 0$. Then, $\mathcal{H}_{\mathrm{L}}$ is a Hilbert space for the inner product
\begin{equation}\label{eq: inner product}
\langle f,g\rangle _{\mathcal{H}_{\mathrm{L}}}=\langle \mathrm{L} f, \mathrm{L} g\rangle  +\gamma^2 \sum_{n=1}^{N_0}\widehat{f}[k_n]\overline{\widehat{g}[k_n]}.
\end{equation}
\end{prop}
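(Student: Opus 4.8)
The plan is to check the inner-product axioms first and then prove completeness, which is where the real content lies. The bilinear form is well defined on $\mathcal{H}_{\mathrm{L}}$ because $\langle \mathrm{L}f,\mathrm{L}g\rangle$ is a finite $L_2$ pairing and the sum over $\{k_n\}$ is finite; sesquilinearity and Hermitian symmetry are then inherited termwise. For positive-definiteness, if $\langle f,f\rangle_{\mathcal{H}_{\mathrm{L}}}=0$ then both non-negative contributions vanish, so $\|\mathrm{L}f\|_{L_2}=0$ and $\widehat{f}[k_n]=0$ for all $n$; by the Parseval identity~\eqref{eq: Parseval} together with the fact that $\widehat{L}[k]\neq 0$ for every $k\in\mathcal{K}_{\mathrm{L}}$ (a consequence of Proposition~\ref{prop: null space}), this forces $\widehat{f}[k]=0$ on $\mathcal{K}_{\mathrm{L}}$ as well, hence $f=0$.

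For completeness I would realize $\mathcal{H}_{\mathrm{L}}$ concretely via the linear map $\Phi\colon f\mapsto\bigl(\mathrm{L}f,\ \gamma\widehat{f}[k_1],\dots,\gamma\widehat{f}[k_{N_0}]\bigr)\in L_2(\mathbb{T})\times\mathbb{C}^{N_0}$, which is an isometry by the very definition of the norm~\eqref{eq: inner product}. The next step is to identify its range: since $\widehat{(\mathrm{L}f)}[k_n]=\widehat{f}[k_n]\widehat{L}[k_n]=0$, the first component always lands in the closed subspace $L_2(\mathbb{T})\ominus\mathcal{N}_{\mathrm{L}}$, and conversely every pair $(g,\mathbf a)$ with $\widehat{g}[k_n]=0$ is attained by the function with coefficients $\widehat{f}[k]=\widehat{g}[k]/\widehat{L}[k]$ on $\mathcal{K}_{\mathrm{L}}$ and $\widehat{f}[k_n]=a_n/\gamma$. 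Thus $\Phi$ would be an isometric isomorphism onto $(L_2(\mathbb{T})\ominus\mathcal{N}_{\mathrm{L}})\times\mathbb{C}^{N_0}$, a product of complete spaces, and completeness of $\mathcal{H}_{\mathrm{L}}$ follows. (Equivalently, one can extract coefficientwise limits of a Cauchy sequence $(f_j)$ — each $\widehat{f_j}[k]$ is Cauchy in $\mathbb{C}$ because the relevant weight $\widehat{L}[k]$, resp. $\gamma$, is nonzero — and use Fatou on the series in~\eqref{eq: search space} to show the limiting coefficients have finite $\mathcal{H}_{\mathrm{L}}$-norm and are approached in norm.)

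The step I expect to be delicate is verifying that the candidate preimage $f=\sum_k\widehat{f}[k]e_k$ is genuinely an element of $\mathcal{S}'(\mathbb{T})$, i.e. that its coefficient sequence has at most polynomial growth. Since $g\in L_2(\mathbb{T})$ only gives $\widehat{g}\in\ell_2$, this requires $1/\widehat{L}[k]$ to be of slow growth on $\mathcal{K}_{\mathrm{L}}$ — equivalently a polynomial lower bound $|\widehat{L}[k]|\gtrsim(1+|k|)^{-r}$ — which is where the structural assumptions on the admissible operator $\mathrm{L}$ must enter; it does not follow from continuity alone. Granting such a bound, $\widehat{f}[k]$ grows at most polynomially, so $f\in\mathcal{S}'(\mathbb{T})$ and hence $f\in\mathcal{H}_{\mathrm{L}}$ by~\eqref{eq: search space}, and the argument closes. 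Everything else is routine.
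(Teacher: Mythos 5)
Your verification of the inner-product axioms is exactly the paper's proof: the authors likewise dispose of linearity, Hermitian symmetry, and non-negativity in one line and reduce everything to positive-definiteness via the Parseval relation~\eqref{eq: Parseval} and the fact that $\widehat{L}[k]\neq 0$ for $k\in\mathcal{K}_{\mathrm{L}}$. Where you differ is that you also address completeness, which the paper's proof does not mention at all; your isometry $\Phi$ onto $(L_2(\mathbb{T})\ominus\mathcal{N}_{\mathrm{L}})\times\mathbb{C}^{N_0}$ is the natural way to do it, and the step you flag as delicate is a genuine one: surjectivity of $\Phi$ (equivalently, membership of the candidate limit in $\mathcal{S}'(\mathbb{T})$) requires $1/\widehat{L}[k]$ to be of slow growth on $\mathcal{K}_{\mathrm{L}}$, and this does not follow from continuity plus a finite-dimensional null space. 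Indeed, the operator with frequency response $\widehat{L}[k]=\mathrm{e}^{-|k|}$ satisfies all the stated hypotheses (it maps $\mathcal{S}(\mathbb{T})$ into itself and has trivial null space), yet the image of $\Phi$ is then a dense proper subspace of $L_2(\mathbb{T})$, so $\mathcal{H}_{\mathrm{L}}$, defined inside $\mathcal{S}'(\mathbb{T})$, is not complete for that operator. The proposition is thus implicitly relying on a polynomial lower bound $|\widehat{L}[k]|\geq C(1+|k|)^{-r}$ on $\mathcal{K}_{\mathrm{L}}$, which holds for every operator the paper actually uses (differential-type operators, and automatically whenever the RKHS condition~\eqref{eq:condition RKHS} of Proposition~\ref{prop: RKHS} holds, since then $1/|\widehat{L}[k]|$ is bounded). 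In short, your argument subsumes the paper's proof, supplies the completeness step the paper leaves tacit, and correctly isolates the extra hypothesis under which that step closes.
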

\noindent The proof is given in Appendix~\ref{app: inner-product}. We have that
${\| f \|_{\mathcal{H}_{\mathrm{L}}}^2=\| \mathrm{L}f\|_{L_2}^2+ \gamma^2 \| \mathrm{Proj}_{\mathcal{N}_{\mathrm{L}}} \{f\}\|_{L_2}^2}$, where $\mathrm{Proj}_{\mathcal{N}_{\mathrm{L}}} \{f\}$ is given by~\eqref{eq: Proj null space}. The coefficient $\gamma^2$ balances the contribution of both terms.

\subsection{Periodic Reproducing-Kernel Hilbert Space} \label{sec:conditionRKHS}
Reproducing-kernel Hilbert spaces (RKHS) are Hilbert spaces on which the evaluation maps $f \mapsto f(t)$ are well-defined, linear, and continuous.
In this section, we answer the question of when the Hilbert space $\mathcal{H}_{\mathrm{L}}$ associated to an LSI operator $\mathrm{L}$ with finite-dimensional null space is a RKHS. This property is relevant to us because periodic function spaces that are RKHS are precisely the ones for which one can use measurement functions of the form $\nu_m = \Sha(\cdot - t_m)$ in~\eqref{eq: opt pb}.

\begin{definition}\label{def: RKHS}
Let $\mathcal{H}\subseteq \mathcal{S}^{\prime} (\mathbb{T})$ be a Hilbert space of $1$-periodic functions and $\mathcal{H}^{\prime}$ be its dual. Then, we say that $\mathcal{H}$ is a \emph{RKHS} if the shifted Dirac comb $\Sha (\cdot - t_0) \in \mathcal{H}^{\prime}$ for any $t_0 \in \mathbb{T}$. 
\end{definition}

This implies that any element $f$ of a RKHS has a pointwise interpretation as a function $t \rightarrow f(t)$.
As is well known, for any RKHS there exists a unique function $h : \mathbb{T} \times \mathbb{T} \rightarrow \mathbb{R}$ such that $h(\cdot, t_0) \in \mathcal{H}^{\prime}$ and ${\langle f, h(\cdot, t_0)\rangle=f(t_0),}$ for every $ t_0 \in \mathbb{T}$ and $ f \in \mathcal{H}$. We call $h$ the \emph{reproducing kernel} of $\mathcal{H}$.

\begin{prop}\label{prop: RKHS}
Let $\mathrm{L}$ be a continuous LSI operator with finite-dimensional null space. The Hilbert space $\mathcal{H}_{\mathrm{L}}$ (see~\eqref{eq: search space}) is a RKHS if and only if 
\begin{equation} \label{eq:condition RKHS}
\sum\limits_{k \in \mathcal{K}_{\mathrm{L}}}\frac{1}{|\widehat{L}[k]|^2}< + \infty.
\end{equation}
Then, the reproducing kernel for the scalar product~\eqref{eq: inner product} is given by $h(t,\tau)=h_{\gamma}(t-\tau)$, where $h_{\gamma} \in \mathcal{S}'(\mathbb{T})$ is
\begin{equation}\label{eq: reproducing kernel}
h_{\gamma}(t)=\sum_{n=1}^{N_0}\frac{e_{k_n}(t)}{\gamma^2}+\sum_{k \in \mathcal{K}_{\mathrm{L}}}\frac{e_k(t)}{|\widehat{L}[k]|^2}.
\end{equation}
\end{prop}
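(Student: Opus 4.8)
The plan is to characterize when $\mathcal{H}_{\mathrm{L}}$ is a RKHS by testing the defining condition of Definition~\ref{def: RKHS}, namely whether the shifted Dirac comb $\Sha(\cdot - t_0)$ lies in the dual $\mathcal{H}_{\mathrm{L}}'$, and then to identify the kernel explicitly. Since $\mathcal{H}_{\mathrm{L}}$ is a Hilbert space (Proposition~\ref{prop: inner product}), by Riesz representation $\Sha(\cdot - t_0) \in \mathcal{H}_{\mathrm{L}}'$ if and only if the linear functional $f \mapsto \langle f, \Sha(\cdot - t_0)\rangle = f(t_0)$ is bounded on $\mathcal{H}_{\mathrm{L}}$. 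The first step is to write everything in the Fourier domain: using $\widehat{\Sha(\cdot - t_0)}[k] = e_k(t_0) = \mathrm{e}^{\mathrm{j}2\pi k t_0}$ (formally) and the decomposition of the $\mathcal{H}_{\mathrm{L}}$-inner product from~\eqref{eq: inner product}, one computes that the Riesz representative of $\Sha(\cdot-t_0)$, if it exists, must be the function $g_{t_0}$ with Fourier coefficients $\widehat{g_{t_0}}[k_n] = \overline{e_{k_n}(t_0)}/\gamma^2$ on the null-space frequencies and $\widehat{g_{t_0}}[k] = \overline{e_k(t_0)}/|\widehat{L}[k]|^2$ for $k \in \mathcal{K}_{\mathrm{L}}$. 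The point evaluation is bounded precisely when this candidate representative actually belongs to $\mathcal{H}_{\mathrm{L}}$, i.e. when $\|g_{t_0}\|_{\mathcal{H}_{\mathrm{L}}}^2 = \sum_{n}\frac{1}{\gamma^2} + \sum_{k\in\mathcal{K}_{\mathrm{L}}} \frac{|\widehat{L}[k]|^2}{|\widehat{L}[k]|^4} = \frac{N_0}{\gamma^2} + \sum_{k\in\mathcal{K}_{\mathrm{L}}}\frac{1}{|\widehat{L}[k]|^2} < +\infty$. Since the first term is always finite, this is equivalent to condition~\eqref{eq:condition RKHS}, which is also independent of $t_0$, proving that the RKHS property holds for all $t_0$ simultaneously (or none).

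For the converse direction — that~\eqref{eq:condition RKHS} is \emph{necessary} — I would argue by contradiction: if $\sum_{k\in\mathcal{K}_{\mathrm{L}}}|\widehat{L}[k]|^{-2} = +\infty$, exhibit a sequence of functions $f_N \in \mathcal{H}_{\mathrm{L}}$ (truncated versions of $g_0$, say $\widehat{f_N}[k] = \overline{e_k(0)}/|\widehat{L}[k]|^2$ for $|k|\le N$, $k\in\mathcal{K}_{\mathrm{L}}$, and zero otherwise) with $\|f_N\|_{\mathcal{H}_{\mathrm{L}}}$ bounded-away-from-controlling $|f_N(0)|$; concretely $f_N(0) = \sum_{|k|\le N}|\widehat{L}[k]|^{-2} \to \infty$ while $\|f_N\|_{\mathcal{H}_{\mathrm{L}}}^2 = \sum_{|k|\le N}|\widehat{L}[k]|^{-2} = f_N(0)$, so $|f_N(0)|/\|f_N\|_{\mathcal{H}_{\mathrm{L}}} = \|f_N\|_{\mathcal{H}_{\mathrm{L}}} \to \infty$, contradicting continuity of the evaluation map. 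Once the equivalence is established, the kernel formula follows by collecting the Fourier coefficients of $g_{t_0}$: $h(t,\tau)$ is the function whose value is $\langle f, h(\cdot,\tau)\rangle = f(\tau)$, so $h(\cdot,\tau) = g_\tau$, and since $\widehat{g_\tau}[k] = \overline{e_k(\tau)}/c_k$ with $c_k = \gamma^2$ or $|\widehat{L}[k]|^2$, we get $h(t,\tau) = \sum_k \frac{e_k(t)\overline{e_k(\tau)}}{c_k} = \sum_k \frac{e_k(t-\tau)}{c_k}$, which is exactly $h_\gamma(t-\tau)$ with $h_\gamma$ as in~\eqref{eq: reproducing kernel}; convergence of this series in $\mathcal{S}'(\mathbb{T})$ is automatic since the coefficients are of slow growth (indeed summable under~\eqref{eq:condition RKHS}).

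The main obstacle is making the formal Fourier-domain manipulations rigorous, since $\Sha(\cdot - t_0)$ is not in $L_2(\mathbb{T})$ and its "Fourier coefficients" $e_k(t_0)$ do not decay: one must justify that $\langle f, \Sha(\cdot - t_0)\rangle$ — defined a priori only as a duality pairing between $\mathcal{S}'(\mathbb{T})$ and $\mathcal{S}(\mathbb{T})$ — genuinely equals $f(t_0)$ for $f \in \mathcal{H}_{\mathrm{L}}$ and that this has a pointwise meaning. The clean way is to first show that under~\eqref{eq:condition RKHS} every $f \in \mathcal{H}_{\mathrm{L}}$ has an absolutely convergent Fourier series (split $\widehat{f}[k] = \widehat{f}[k]\widehat{L}[k]\cdot\widehat{L}[k]^{-1}$ on $\mathcal{K}_{\mathrm{L}}$ and apply Cauchy–Schwarz: $\sum_{k\in\mathcal{K}_{\mathrm{L}}}|\widehat{f}[k]| \le \big(\sum |\widehat{f}[k]\widehat{L}[k]|^2\big)^{1/2}\big(\sum|\widehat{L}[k]|^{-2}\big)^{1/2} < \infty$), hence $f$ is continuous and the pointwise evaluation is legitimate; then the Riesz computation goes through directly. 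For the necessity direction the subtlety is instead ensuring the test functions $f_N$ lie in $\mathcal{S}'(\mathbb{T})$ and in $\mathcal{H}_{\mathrm{L}}$, which is immediate since they are trigonometric polynomials. I would also remark that the kernel does not depend on $\gamma^2$ only through the $N_0$ null-space terms, so different choices of $\gamma^2$ give genuinely different (but equivalent) RKHS structures.
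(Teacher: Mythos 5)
Your proposal is correct and follows essentially the same route as the paper's proof: a Fourier-domain Cauchy--Schwarz argument for sufficiency, the same truncated trigonometric-polynomial sequence to show necessity when $\sum_{k\in\mathcal{K}_{\mathrm{L}}}|\widehat{L}[k]|^{-2}$ diverges, and identification of the kernel through the Riesz representative of $\Sha(\cdot-t_0)$ with respect to the inner product~\eqref{eq: inner product}. The only difference is organizational --- you exhibit the candidate representative $g_{t_0}$ and verify the reproducing identity via absolute convergence of the Fourier series, whereas the paper bounds $|\langle\Sha,f\rangle|$ directly and then computes the Riesz map on the basis $\{e_k\}$ --- which amounts to the same computation.
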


\smallskip
\noindent The proof is given in Appendix~\ref{app: RKHS}. Note that the reproducing kernel only depends on the difference $(t-\tau)$.

\subsection{Periodic Representer Theorem}
Now that we have defined the search space of the optimization problem~\eqref{eq: opt pb}, we derive the representer theorem that gives the explicit form of its unique periodic solution.

\begin{thm}\label{th: periodic representer thm}
We consider the optimization problem 
\begin{equation}\label{eq: periodic representer thm}
\underset{f \in \mathcal{H}_{\mathrm{L}}}{\min}\bigg( F(\mathbf{y},\boldsymbol\nu (f))+\lambda \| \mathrm{L}f\| _{L_2}^2\bigg),
\end{equation}
where
\begin{itemize}
	\item $F: \mathbb{R}^M \times \mathbb{R}^M \rightarrow \mathbb{R}^+$ is strictly convex and continuous;
	\item $\mathrm{L}$ is an LSI operator with finite-dimensional null space;
	\item $\boldsymbol \nu = (\nu_1, \dots ,\nu_M) \in (\mathcal{H}'_{\mathrm{L}})^M$ such that $\mathcal{N}_{\mathrm{L}}\cap \mathcal{N}_{\boldsymbol \nu}=\{0\}$;
	\item $\mathbf{y}=(y_1, \ldots ,y_M) \in \mathbb{R}^M$ are the observed data; and 
	\item $\lambda > 0$ is a tuning parameter.
\end{itemize}

\noindent Then,~\eqref{eq: periodic representer thm} 
 has a unique solution of the form 
\begin{equation}\label{eq:formsolutionRT}
f_{\mathrm{RT}}(t)=\sum\limits_{m=1}^M a_m \varphi_m(t)+\sum_{n=1}^{N_0}b_n e_{k_n}(t),
\end{equation}
where $a_m, b_n \in \mathbb{R}$, $\varphi_m=h_{\gamma} \ast \nu_m$, and $h_{\gamma}$ is given by~\eqref{eq: reproducing kernel}. Moreover, the vector $\mathbf{a} = (a_1, \ldots  ,a_{M})$ satisfies the relation $\mathbf{P}^{\mathsf{T}} \mathbf{a} = \mathbf{0}$, with $\mathbf{P}$ the $(M\times N_0)$ matrix with entries ${[\mathbf{P}]_{m,n} = \langle e_{k_n}, \nu_m \rangle}$.
\end{thm}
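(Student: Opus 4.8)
The plan is to follow the classical structure of representer-theorem proofs, but adapted to the Hilbert space $\mathcal{H}_{\mathrm{L}}$ equipped with the inner product \eqref{eq: inner product}. First I would establish existence and uniqueness of a minimizer. Since $\mathrm{L}$ has finite-dimensional null space and $\mathcal{N}_{\mathrm{L}}\cap\mathcal{N}_{\boldsymbol\nu}=\{0\}$, the functional $f\mapsto F(\mathbf{y},\boldsymbol\nu(f))+\lambda\|\mathrm{L}f\|_{L_2}^2$ is coercive on $\mathcal{H}_{\mathrm{L}}$: on the null-space directions the data term $F$ grows (because $\boldsymbol\nu$ is injective on $\mathcal{N}_{\mathrm{L}}$, and $F$ is strictly convex hence has bounded sublevel sets in its second argument after composing with the linear injection), while on the complement $\|\mathrm{L}f\|_{L_2}^2=\|f\|_{\mathcal{H}_{\mathrm{L}}}^2-\gamma^2\|\mathrm{Proj}_{\mathcal{N}_{\mathrm{L}}}f\|_{L_2}^2$ controls the $\mathcal{H}_{\mathrm{L}}$-norm. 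Continuity of $f\mapsto\langle\nu_m,f\rangle$ holds because $\nu_m\in\mathcal{H}'_{\mathrm{L}}$, and $F$ is continuous and convex (strictly convex after precomposition with the injective map $f\mapsto\boldsymbol\nu(f)$ on the relevant subspace), so a unique minimizer exists by the standard direct method in the reflexive (here Hilbert) space $\mathcal{H}_{\mathrm{L}}$.

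Next I would identify the form of the minimizer. Let $V=\mathrm{span}\{\varphi_1,\dots,\varphi_M\}+\mathcal{N}_{\mathrm{L}}\subseteq\mathcal{H}_{\mathrm{L}}$, where $\varphi_m=h_\gamma\ast\nu_m$. The key observation is that, by the reproducing property of $h_\gamma$ (Proposition~\ref{prop: RKHS}), $\varphi_m$ is exactly the Riesz representer of the bounded linear functional $f\mapsto\langle\nu_m,f\rangle$ in $\mathcal{H}_{\mathrm{L}}$: indeed $\langle f,\varphi_m\rangle_{\mathcal{H}_{\mathrm{L}}}=\langle f,h_\gamma\ast\nu_m\rangle_{\mathcal{H}_{\mathrm{L}}}=\langle\nu_m,f\rangle$, which I would verify by expanding both sides in the Fourier basis using \eqref{eq: inner product} and \eqref{eq: reproducing kernel}. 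Now decompose any $f\in\mathcal{H}_{\mathrm{L}}$ orthogonally as $f=f_V+f_\perp$ with $f_V\in V$ and $f_\perp\perp V$. Then $\langle\nu_m,f\rangle=\langle f,\varphi_m\rangle_{\mathcal{H}_{\mathrm{L}}}=\langle f_V,\varphi_m\rangle_{\mathcal{H}_{\mathrm{L}}}$ since $\varphi_m\in V$, so the data term depends only on $f_V$; and $\widehat{f}[k_n]=\langle f, e_{k_n}/\gamma^2\cdot\gamma^2\rangle$-type arguments (more precisely $e_{k_n}\in V$) show the null-space components of $f$ coincide with those of $f_V$. Meanwhile $\|\mathrm{L}f\|_{L_2}^2=\|f\|_{\mathcal{H}_{\mathrm{L}}}^2-\gamma^2\|\mathrm{Proj}_{\mathcal{N}_{\mathrm{L}}}f\|_{L_2}^2=\|f_V\|_{\mathcal{H}_{\mathrm{L}}}^2+\|f_\perp\|_{\mathcal{H}_{\mathrm{L}}}^2-\gamma^2\|\mathrm{Proj}_{\mathcal{N}_{\mathrm{L}}}f_V\|_{L_2}^2$, which is minimized over the $f_\perp$ direction precisely when $f_\perp=0$. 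Hence the minimizer lies in $V$, giving the form \eqref{eq:formsolutionRT}.

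Finally I would derive the constraint $\mathbf{P}^{\mathsf{T}}\mathbf{a}=\mathbf{0}$. Writing $f_{\mathrm{RT}}=\sum_m a_m\varphi_m+\sum_n b_n e_{k_n}$ and computing $\|\mathrm{L}f_{\mathrm{RT}}\|_{L_2}^2$: since $\mathrm{L}e_{k_n}=0$ and $\mathrm{L}\varphi_m=\mathrm{L}(h_\gamma\ast\nu_m)$ has Fourier coefficients $\widehat{\nu_m}[k]$ for $k\in\mathcal{K}_{\mathrm{L}}$ and $0$ for $k=k_n$ (from \eqref{eq: reproducing kernel}), one finds that the cross terms between the $\varphi$-part and the $e_{k_n}$-part vanish in $\|\mathrm{L}f_{\mathrm{RT}}\|_{L_2}^2$, and also the data term $\langle\nu_m,f_{\mathrm{RT}}\rangle$ is affine in $(\mathbf{a},\mathbf{b})$. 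The stationarity condition with respect to $b_n$ (which enters only through the data term, not the regularizer) forces a linear relation; carrying it out, the partial derivative in $b_n$ gives $\sum_m c_m\langle e_{k_n},\nu_m\rangle=0$ for the cost-gradient coefficients $c_m$, and matching these with $\mathbf{a}$ via the representer identity yields $\mathbf{P}^{\mathsf{T}}\mathbf{a}=\mathbf{0}$. I expect the main obstacle to be the bookkeeping in the orthogonality/coercivity step: carefully justifying that $\varphi_m$ is the Riesz representer (which requires the RKHS condition \eqref{eq:condition RKHS} so that $h_\gamma\ast\nu_m$ genuinely lies in $\mathcal{H}_{\mathrm{L}}$) and that $F$ composed with the injective linear map is strictly convex and coercive on the finite-dimensional effective domain, so that uniqueness — not just existence — of $(\mathbf{a},\mathbf{b})$ follows.
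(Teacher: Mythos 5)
Your argument is essentially correct, but it takes a genuinely different route from the paper. The paper proves existence and uniqueness through strict convexity and coercivity of the objective (its Lemma~\ref{lemma:convcoerc}), then obtains the form of the minimizer indirectly: it freezes the optimal measurement values $\mathbf{z}_0=\boldsymbol\nu(f_{\mathrm{RT}})$, applies an abstract minimum-norm representer theorem on the reduced space $\widetilde{\mathcal{H}}_{\mathrm{L}}=\{f\in\mathcal{H}_{\mathrm{L}}:\mathrm{Proj}_{\mathcal{N}_{\mathrm{L}}}\{f\}=0\}$ equipped with the norm $\lVert\mathrm{L}f\rVert_{L_2}$, and converts the Riesz representers of that space back through $\mathrm{R}\nu_m=\varphi_m-\gamma^2\mathrm{Proj}_{\mathcal{N}_{\mathrm{L}}}\{\nu_m\}$. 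You instead work directly in $(\mathcal{H}_{\mathrm{L}},\langle\cdot,\cdot\rangle_{\mathcal{H}_{\mathrm{L}}})$: you identify $\varphi_m=h_\gamma\ast\nu_m$ as the Riesz representer of $\nu_m$ for the inner product~\eqref{eq: inner product} and run the classical orthogonality argument against the finite-dimensional space $V=\mathrm{span}\{\varphi_m\}+\mathcal{N}_{\mathrm{L}}$, noting that the data term and the null-space components of $f$ depend only on $f_V$ while the regularizer strictly decreases when the orthogonal part is dropped. This is correct and arguably more self-contained than the paper's two-space detour; what the paper's construction buys is that the condition $\mathbf{P}^{\mathsf{T}}\mathbf{a}=\mathbf{0}$ is built in (its representers have no null-space component), whereas in your setup it must be extracted from first-order optimality.

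Three points need repair. First, ``strictly convex hence bounded sublevel sets'' is false (think of $z\mapsto \mathrm{e}^{-z}$); coercivity of $F(\mathbf{y},\cdot)$ is an additional assumption that the paper's Lemma~\ref{lemma:convcoerc} also invokes, so state it rather than derive it. Second, condition~\eqref{eq:condition RKHS} is not needed and not assumed in Theorem~\ref{th: periodic representer thm}: $\varphi_m$ is the Riesz representer as soon as $\nu_m\in\mathcal{H}'_{\mathrm{L}}$, its Fourier coefficients being $\widehat{h}_\gamma[k]\widehat{\nu}_m[k]$; requiring~\eqref{eq:condition RKHS} would restrict the theorem to the sampling-compatible case. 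Third, your derivation of $\mathbf{P}^{\mathsf{T}}\mathbf{a}=\mathbf{0}$ is only sketched: since $F$ is merely continuous and convex, replace partial derivatives by a subgradient condition, and make the ``matching'' explicit, e.g.\ test optimality against arbitrary $g\in\mathcal{H}_{\mathrm{L}}$ to get $2\lambda\langle\mathrm{L}f_{\mathrm{RT}},\mathrm{L}g\rangle+\sum_m c_m\langle\nu_m,g\rangle=0$ for some $\mathbf{c}\in\partial_{\mathbf{z}}F(\mathbf{y},\boldsymbol\nu(f_{\mathrm{RT}}))$; the choice $g=e_{k_n}$ kills the regularization term, and general $g$ identifies a representation with $\mathbf{a}=-\mathbf{c}/(2\lambda)$, which then satisfies the constraint (the claim concerns one such representation, since the expansion over $\{\varphi_m\}\cup\{e_{k_n}\}$ may be redundant).
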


\noindent The proof of Theorem~\ref{th: periodic representer thm} is given in Appendix~\ref{app: per rep th}. 
The optimal solution depends on $(M+N_0)$ coefficients, but the condition $\mathbf{P}^\mathsf{T} \mathbf{a} = \mathbf{0}$ implies that there are only $(M+N_0-N_0)= M$ degrees of freedom. 
In the case when $F$ is quadratic of the form~\eqref{eq: quadratic F}, the solution is made explicit in Proposition~\ref{prop: interpolation pb}.

\begin{prop}\label{prop: interpolation pb}
Under the conditions of Theorem~\ref{th: periodic representer thm}, if $F$ is given by~\eqref{eq: quadratic F}, then the vectors $\mathbf{a}$ and $\mathbf{b}$ satisfy the linear system
\begin{equation} \label{eq:matrixform}
\begin{pmatrix}
\mathbf{a} \\
\mathbf{b}
\end{pmatrix}=
\begin{pmatrix}
\mathbf{G}+ \lambda \mathbf{I} & \mathbf{P} \\
\mathbf{P}^{\mathsf{T}} & \mathbf{0}
\end{pmatrix}^{-1}
\begin{pmatrix}
\mathbf{y} \\
\mathbf{0}
\end{pmatrix}, 
\end{equation}
where $\mathbf{P} \in \mathbb{C}^{M\times N_0}$ is defined by $[\mathbf{P}]_{m,n}=\langle e_{k_n}, \nu_m \rangle$ and $\mathbf{G} \in \mathbb{R}^{M \times M}$ is a Gram matrix such that 
\begin{equation}
\label{eq:Chgamma}
[\mathbf{G}]_{m_1,m_2} = \int_0^1\int_0^1 \nu_{m_1}(t)h_{\gamma}(t-\tau)\nu_{m_2}(\tau)\mathrm{d}t \mathrm{d}\tau.\end{equation}
\end{prop}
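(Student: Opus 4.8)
The plan is to start from the general form of the solution established in Theorem~\ref{th: periodic representer thm}, namely $f_{\mathrm{RT}} = \sum_{m=1}^M a_m \varphi_m + \sum_{n=1}^{N_0} b_n e_{k_n}$ with $\varphi_m = h_\gamma \ast \nu_m$, and simply substitute it into the objective functional of~\eqref{eq: periodic representer thm} with the quadratic cost~\eqref{eq: quadratic F}. The entire problem then reduces to a finite-dimensional convex quadratic minimization over the coefficient vectors $\mathbf{a} \in \mathbb{R}^M$ and $\mathbf{b} \in \mathbb{R}^{N_0}$, subject to the linear constraint $\mathbf{P}^\mathsf{T} \mathbf{a} = \mathbf{0}$ that is already part of the conclusion of Theorem~\ref{th: periodic representer thm}. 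First I would compute the two quantities appearing in the objective. For the data term, I use the reproducing property: since $h_\gamma$ is the reproducing kernel of $\mathcal{H}_{\mathrm{L}}$ and $\varphi_m = h_\gamma \ast \nu_m$, one has $\langle \nu_{m_1}, \varphi_{m_2} \rangle = [\mathbf{G}]_{m_1,m_2}$ with $\mathbf{G}$ as in~\eqref{eq:Chgamma}, and $\langle \nu_m, e_{k_n} \rangle = \overline{[\mathbf{P}]_{m,n}}$ (with the appropriate conjugation convention); this gives $\boldsymbol\nu(f_{\mathrm{RT}}) = \mathbf{G}\mathbf{a} + \mathbf{P}\mathbf{b}$, so the data term is $\lVert \mathbf{y} - \mathbf{G}\mathbf{a} - \mathbf{P}\mathbf{b}\rVert^2$. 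For the regularization term, I invoke the Hilbert-space structure of Proposition~\ref{prop: inner product}: $\lVert \mathrm{L} f_{\mathrm{RT}}\rVert_{L_2}^2 = \lVert f_{\mathrm{RT}}\rVert_{\mathcal{H}_{\mathrm{L}}}^2 - \gamma^2 \lVert \mathrm{Proj}_{\mathcal{N}_{\mathrm{L}}} f_{\mathrm{RT}}\rVert_{L_2}^2$, and using $\langle \varphi_m, \varphi_{m'}\rangle_{\mathcal{H}_{\mathrm{L}}} = \langle \nu_m, \varphi_{m'}\rangle = [\mathbf{G}]_{m,m'}$ together with the orthogonality of $e_{k_n}$ to the $\varphi_m$ modulo the null-space part, this collapses (thanks to $\mathbf{P}^\mathsf{T}\mathbf{a} = \mathbf{0}$, which kills the cross terms and the $\mathbf{b}$-dependence) to $\mathbf{a}^\mathsf{T}\mathbf{G}\mathbf{a}$.

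Next I would write the Lagrangian for the constrained problem
\[
\min_{\mathbf{a},\mathbf{b}} \ \lVert \mathbf{y} - \mathbf{G}\mathbf{a} - \mathbf{P}\mathbf{b}\rVert^2 + \lambda\, \mathbf{a}^\mathsf{T}\mathbf{G}\mathbf{a} \quad \text{s.t.}\ \mathbf{P}^\mathsf{T}\mathbf{a} = \mathbf{0},
\]
and set the gradients with respect to $\mathbf{a}$ and $\mathbf{b}$ to zero. Differentiating in $\mathbf{b}$ yields $\mathbf{P}^\mathsf{T}(\mathbf{y} - \mathbf{G}\mathbf{a} - \mathbf{P}\mathbf{b}) = \mathbf{0}$; differentiating in $\mathbf{a}$ and using that $\mathbf{G}$ is symmetric gives $\mathbf{G}(\mathbf{y} - \mathbf{G}\mathbf{a} - \mathbf{P}\mathbf{b}) = \lambda \mathbf{G}\mathbf{a}$, i.e.\ $\mathbf{G}\big(\mathbf{y} - (\mathbf{G}+\lambda\mathbf{I})\mathbf{a} - \mathbf{P}\mathbf{b}\big) = \mathbf{0}$. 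The cleanest route is then to introduce the auxiliary vector $\mathbf{c} = \mathbf{y} - (\mathbf{G}+\lambda\mathbf{I})\mathbf{a} - \mathbf{P}\mathbf{b}$ and argue that the stationarity equations force $\mathbf{c} = \mathbf{0}$: on the range of $\mathbf{G}$ this is immediate from $\mathbf{G}\mathbf{c} = \mathbf{0}$ once one knows the map is injective there, and the component of the residual in $\ker \mathbf{G}$ is controlled via the $\mathbf{P}^\mathsf{T}$ equation together with the hypothesis $\mathcal{N}_{\mathrm{L}} \cap \mathcal{N}_{\boldsymbol\nu} = \{0\}$, which guarantees $\mathbf{P}$ has full column rank $N_0$ and that $\ker \mathbf{G}$ is spanned exactly by the null-space directions. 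With $\mathbf{c} = \mathbf{0}$ in hand, the two equations $(\mathbf{G}+\lambda\mathbf{I})\mathbf{a} + \mathbf{P}\mathbf{b} = \mathbf{y}$ and $\mathbf{P}^\mathsf{T}\mathbf{a} = \mathbf{0}$ are exactly the block linear system displayed in~\eqref{eq:matrixform}.

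It remains to check that the block matrix in~\eqref{eq:matrixform} is invertible, so that writing the solution as $(\mathbf{a},\mathbf{b}) = (\cdots)^{-1}(\mathbf{y},\mathbf{0})$ is legitimate; this is the step I expect to be the main technical obstacle. Since $\lambda > 0$, the matrix $\mathbf{G} + \lambda\mathbf{I}$ is positive definite (as $\mathbf{G}$ is positive semidefinite, being a Gram matrix for $\langle\cdot,\cdot\rangle_{\mathcal{H}_{\mathrm{L}}}$ on the $\varphi_m$), and $\mathbf{P}$ has full column rank because $\mathcal{N}_{\mathrm{L}} \cap \mathcal{N}_{\boldsymbol\nu} = \{0\}$ implies the measurement functionals are linearly independent on $\mathcal{N}_{\mathrm{L}} = \mathrm{span}\{e_{k_n}\}$. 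A saddle-point matrix $\left(\begin{smallmatrix} \mathbf{A} & \mathbf{P} \\ \mathbf{P}^\mathsf{T} & \mathbf{0}\end{smallmatrix}\right)$ with $\mathbf{A}$ symmetric positive definite and $\mathbf{P}$ of full column rank is a standard object and is invertible; I would either cite this fact or give the one-line block-elimination argument (the Schur complement $-\mathbf{P}^\mathsf{T}(\mathbf{G}+\lambda\mathbf{I})^{-1}\mathbf{P}$ is negative definite). Finally, since the finite-dimensional objective is convex and the constraint is affine, the stationary point found is the global minimizer, and by Theorem~\ref{th: periodic representer thm} it corresponds to the unique solution of~\eqref{eq: periodic representer thm}; this closes the proof. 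One mild bookkeeping point to handle carefully is the complex-valued nature of $\mathbf{P}$ and the $e_{k_n}$ versus the real-valued solution: because $\mathrm{L}$ is real, the $k_n$ come in $\pm$ pairs and one can pass to a real basis of $\mathcal{N}_{\mathrm{L}}$, or simply keep the Hermitian conventions consistent throughout, which is what I would do.
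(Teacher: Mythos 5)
Your proposal follows essentially the same route as the paper's own proof: substitute the representer form $f_{\mathrm{RT}}=\sum_m a_m\varphi_m+\sum_n b_n e_{k_n}$ from Theorem~\ref{th: periodic representer thm} into the quadratic functional, identify the data term with $\lVert \mathbf{y}-\mathbf{G}\mathbf{a}-\mathbf{P}\mathbf{b}\rVert^2$, show that the regularization collapses to $\mathbf{a}^{\mathsf{T}}\mathbf{G}\mathbf{a}$ thanks to $\mathbf{P}^{\mathsf{T}}\mathbf{a}=\mathbf{0}$, and read off the block linear system. The only real stylistic difference is in the middle step: you get $\lVert \mathrm{L}f_{\mathrm{RT}}\rVert_{L_2}^2=\mathbf{a}^{\mathsf{T}}\mathbf{G}\mathbf{a}$ from the Riesz-map identity $\langle\varphi_m,\varphi_{m'}\rangle_{\mathcal{H}_{\mathrm{L}}}=\langle\nu_m,\varphi_{m'}\rangle$, whereas the paper uses the operator identity $(\mathrm{L}^{*}\mathrm{L}h_\gamma)\ast f=f$ for functions with vanishing null-space Fourier coefficients; the two are equivalent. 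Your explicit verification that the saddle-point matrix is invertible ($\mathbf{G}+\lambda\mathbf{I}$ positive definite, $\mathbf{P}$ of full column rank because $\mathcal{N}_{\mathrm{L}}\cap\mathcal{N}_{\boldsymbol\nu}=\{0\}$) is a useful addition that the paper leaves implicit.

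One sub-claim in your stationarity argument is wrong as stated: $\ker\mathbf{G}$ is \emph{not} ``spanned by the null-space directions.'' Since $\mathbf{G}$ is the Gram matrix of the representers $\varphi_m=\mathrm{R}\nu_m$ in $\mathcal{H}_{\mathrm{L}}$, a vector $\mathbf{x}\in\ker\mathbf{G}$ corresponds exactly to a linear dependence $\sum_m x_m\nu_m=0$ in $\mathcal{H}_{\mathrm{L}}'$; this has nothing to do with $\mathcal{N}_{\mathrm{L}}$, and the hypothesis $\mathcal{N}_{\mathrm{L}}\cap\mathcal{N}_{\boldsymbol\nu}=\{0\}$ does not exclude such dependencies. (Also, if you genuinely impose the constraint $\mathbf{P}^{\mathsf{T}}\mathbf{a}=\mathbf{0}$ through a Lagrangian, the $\mathbf{a}$-equation acquires a multiplier term $\mathbf{P}\bm{\mu}$ that your displayed stationarity equations drop.) When the $\nu_m$ are linearly independent, $\ker\mathbf{G}=\{0\}$ and your residual argument $\mathbf{G}\mathbf{c}=\mathbf{0}\Rightarrow\mathbf{c}=\mathbf{0}$ goes through directly. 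In the degenerate case the clean patch is to reverse the logic: the block system is uniquely solvable (your invertibility argument), and one checks that its solution $(\mathbf{a},\mathbf{b})$ yields a function satisfying the first-order optimality condition of~\eqref{eq: periodic representer thm} in $\mathcal{H}_{\mathrm{L}}$, hence equal to $f_{\mathrm{RT}}$ by strict convexity. To be fair, the paper's own ``by computing the partial derivatives'' is equally terse at precisely this point, so this is a repair of a shared gap rather than a failure of your overall strategy.
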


\noindent The proof is given in Appendix~\ref{app: interpolation pb}. 
{In the case of sampling measurements, we show moreover in Proposition~\ref{prop: Spline} that the optimal solution is a periodic spline in the sense of Definition~\ref{def:Lspline}.
We recall that such measurements are valid as soon as the search space $\mathcal{H}_{\mathrm{L}}$ is a RKHS, a situation that has been fully characterized in Proposition~\ref{prop: RKHS}.}

\begin{prop}\label{prop: Spline}
{Under the conditions of Proposition~\ref{prop: interpolation pb}, if $\mathrm{L}$ satisfies~\eqref{eq:condition RKHS} and if the measurements are of the form ${\nu_m=\Sha(\cdot-t_m)}$, $t_m \in \mathbb{T}$, then the unique solution of~\eqref{eq: periodic representer thm} is a periodic $(\mathrm{L}^*\mathrm{L})$-spline with weights $a_m$ and knots $t_m$. }
%{For an interpolation problem and under the conditions of Proposition~\ref{prop: interpolation pb}, the optimal solution is a periodic $(\mathrm{L}^*\mathrm{L})$-spline with weights $a_m$ and knots $t_m$ in the sense of Definition~\ref{def:Lspline}}.
\end{prop}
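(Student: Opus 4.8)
The plan is to start from the explicit form of the solution given by Theorem~\ref{th: periodic representer thm} and Proposition~\ref{prop: interpolation pb}, and simply apply the operator $\mathrm{L}^*\mathrm{L}$ to it, checking that the result is a sum of shifted Dirac combs at the knots $t_m$. First I would write out the solution: by~\eqref{eq:formsolutionRT} with $\nu_m = \Sha(\cdot - t_m)$ we have
\begin{equation*}
f_{\mathrm{RT}}(t)=\sum_{m=1}^M a_m (h_\gamma \ast \Sha(\cdot - t_m))(t) + \sum_{n=1}^{N_0} b_n e_{k_n}(t) = \sum_{m=1}^M a_m h_\gamma(t - t_m) + \sum_{n=1}^{N_0} b_n e_{k_n}(t),
\end{equation*}
using that $h_\gamma \ast \Sha(\cdot - t_m) = h_\gamma(\cdot - t_m)$ (convolution with a shifted Dirac comb is a shift, by~\eqref{eq: periodic convolution}). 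The sinusoidal part lies in $\mathcal{N}_{\mathrm{L}} \subseteq \mathcal{N}_{\mathrm{L}^*\mathrm{L}}$, so it is killed by $\mathrm{L}^*\mathrm{L}$; hence it suffices to understand $\mathrm{L}^*\mathrm{L} h_\gamma$.

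The key computation is to show $\mathrm{L}^*\mathrm{L} h_\gamma = \Sha - \mathrm{Proj}_{\mathcal{N}_{\mathrm{L}}}\{\Sha\}$, or at any rate that $\mathrm{L}^*\mathrm{L} h_\gamma$ equals $\Sha$ up to a finite trigonometric polynomial supported on the null-space frequencies. I would do this in the Fourier domain: the frequency response of $\mathrm{L}^*\mathrm{L}$ is $|\widehat{L}[k]|^2$, and from~\eqref{eq: reproducing kernel} the Fourier coefficients of $h_\gamma$ are $1/\gamma^2$ for $k \in \{k_n\}$ and $1/|\widehat{L}[k]|^2$ for $k \in \mathcal{K}_{\mathrm{L}}$. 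Multiplying, $\widehat{(\mathrm{L}^*\mathrm{L} h_\gamma)}[k] = 0$ for $k = k_n$ (since $\widehat{L}[k_n]=0$) and $= 1$ for $k \in \mathcal{K}_{\mathrm{L}}$. Thus $\mathrm{L}^*\mathrm{L} h_\gamma = \sum_{k \in \mathcal{K}_{\mathrm{L}}} e_k = \Sha - \sum_{n=1}^{N_0} e_{k_n}$, a shifted-and-corrected Dirac comb. Consequently
\begin{equation*}
\mathrm{L}^*\mathrm{L} f_{\mathrm{RT}}(t) = \sum_{m=1}^M a_m \Big(\Sha(t - t_m) - \sum_{n=1}^{N_0} e_{k_n}(t - t_m)\Big) = \sum_{m=1}^M a_m \Sha(t - t_m) - \sum_{n=1}^{N_0}\Big(\sum_{m=1}^M a_m e_{k_n}(-t_m)\Big) e_{k_n}(t).
\end{equation*}
Here is where the orthogonality condition $\mathbf{P}^{\mathsf{T}}\mathbf{a} = \mathbf{0}$ from Theorem~\ref{th: periodic representer thm} enters: the coefficient of $e_{k_n}(t)$ is (up to conjugation/sign conventions) $\sum_m a_m \langle e_{k_n}, \Sha(\cdot - t_m)\rangle = \sum_m a_m [\mathbf{P}]_{m,n} = (\mathbf{P}^{\mathsf{T}}\mathbf{a})_n = 0$. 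Hence all these correction terms vanish and $\mathrm{L}^*\mathrm{L} f_{\mathrm{RT}} = \sum_{m=1}^M a_m \Sha(\cdot - t_m)$, which is exactly the defining property of a periodic $(\mathrm{L}^*\mathrm{L})$-spline with weights $a_m$ and knots $t_m$ (Definition~\ref{def:Lspline}). One should also note that $\mathrm{L}^*\mathrm{L}$ is itself a continuous LSI operator with finite-dimensional null space $\mathcal{N}_{\mathrm{L}}$, so the notion of $(\mathrm{L}^*\mathrm{L})$-spline is well-defined.

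The main obstacle is bookkeeping rather than conceptual: I need to verify that $h_\gamma \ast \Sha(\cdot - t_m)$ is genuinely the shift $h_\gamma(\cdot - t_m)$ in the generalized-function sense (this follows from~\eqref{eq: periodic convolution} together with the fact, noted in Section~\ref{subsec:LSIop}, that $\mathrm{L}$ — and hence $\mathrm{L}^*\mathrm{L}$ — extends continuously to $\mathcal{S}'(\mathbb{T})$, so applying $\mathrm{L}^*\mathrm{L}$ termwise to the Fourier series of $h_\gamma$ is legitimate), and that the identification $[\mathbf{P}]_{m,n} = \langle e_{k_n}, \nu_m\rangle = \overline{e_{k_n}(t_m)} = e_{-k_n}(t_m)$ is consistent with the complex-conjugation convention used for the duality product, so that $\mathbf{P}^{\mathsf{T}}\mathbf{a} = \mathbf{0}$ really does cancel the stray sinusoids. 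A minor additional point worth stating is that $\mathrm{L}^* \mathrm{L} h_\gamma$ has slow-growth Fourier coefficients (all equal to $0$ or $1$), so $\mathrm{L}^*\mathrm{L} f_{\mathrm{RT}} \in \mathcal{S}'(\mathbb{T})$ and the spline identity holds in $\mathcal{S}'(\mathbb{T})$, which is the setting of Definition~\ref{def:Lspline}.
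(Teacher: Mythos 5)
Your proposal is correct and follows essentially the same route as the paper's own proof: apply $\mathrm{L}^*\mathrm{L}$ to the representer-theorem solution, use the Fourier-domain identity $\mathrm{L}^*\mathrm{L} h_\gamma = \Sha - \mathrm{Proj}_{\mathcal{N}_{\mathrm{L}}}\{\Sha\}$, and invoke $\mathbf{P}^{\mathsf{T}}\mathbf{a}=\mathbf{0}$ (together with realness of $\mathbf{a}$, so the conjugated condition also holds) to cancel the null-space correction terms. The conjugation bookkeeping you flag is handled in the paper exactly as you suggest, via $\overline{\mathbf{P}}^{\mathsf{T}}\mathbf{a}=\overline{\mathbf{P}^{\mathsf{T}}\mathbf{a}}=\mathbf{0}$.
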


\noindent The proof is given in Appendix~\ref{app: Spline}.

% %%%%%%%%%%%%%%%%%%%%%%%%%%%%%%%%%%%%%%%%%%%%%%%%%%%%%%%%%%%%%%%%%%%%%%%%%%%%%
\section{Periodic Processes and MMSE}\label{sec:processes_MMSE}
% %%%%%%%%%%%%%%%%%%%%%%%%%%%%%%%%%%%%%%%%%%%%%%%%%%%%%%%%%%%%%%%%%%%%%%%%%%%%%%%
In this section, we change perspective and consider the following statistical problem: given noisy measurements of a zero-mean and real periodic Gaussian process, we are looking for the optimal estimator (for the mean-square error) of the complete process over $\mathbb{T}$.

\subsection{Non-Periodic Setting} \label{sec:Non-Per Setting}
In a non-periodic setting, it is usual to consider stochastic models where the random process $s$ is a solution to the stochastic differential equation~\cite{Unser2014sparse}
\begin{equation} \label{eq:Ls=w}
\mathrm{L} s = w,
\end{equation}
where $\mathrm{L}$ is a linear differential operator and $w$ a continuous domain (non-periodic) Gaussian white noise. When the null space of the operator is nontrivial, it is necessary to add boundary conditions such that the law of the process $s$ is uniquely defined.

\subsection{Gaussian Bridges} \label{sec:GGB}
In the periodic setting, the construction of periodic Gaussian processes has to be adapted. We first introduce the notion of periodic Gaussian white noise, exploiting the fact that the law of a zero-mean periodic Gaussian process $s$ is fully characterized by its covariance function $r_s(t,\tau)$ such that
\begin{equation}\label{eq: esp Gaussian process}
\mathbb{E} [\langle s , f \rangle \langle s , g \rangle ] = \int_0^1 \int_0^1 f(t) r_s(t,\tau) \overline{g(\tau)} \mathrm{d} t \mathrm{d} \tau.
\end{equation}

\begin{definition}\label{def: Gaussian white noise}
A \emph{periodic Gaussian white noise}\footnote{Without loss of generality, we only consider Gaussian white noise with zero-mean and variance $1$.} is a Gaussian random process $w$ whose covariance is ${r_w(t,\tau) = \Sha(t-\tau)}$.
\end{definition}

For any periodic real function $f$, the random variable $\langle w, f \rangle$ is therefore Gaussian with mean $0$ and variance $\lVert f \rVert_{L_2}^2$. Moreover, $\langle w, f \rangle$ and $\langle w, g \rangle$ are independent if and only if $\langle f , g \rangle = 0$. Hence, the Fourier coefficients $\widehat{w}[k]=\langle w,\mathrm{e}_k \rangle$ of the periodic Gaussian white noise satisfy the following properties:
\begin{itemize}
\item $\widehat{w}[k]=\Re(\widehat{w}[k])+ \mathrm{j} \ \Im(\widehat{w}[k])$;
\item $\overline{\widehat{w}[-k]}=\widehat{w}[k]$;
\item $\Re(\widehat{w}[k]), \ \Im(\widehat{w}[k]) \sim \mathcal{N}(0, \frac{1}{2})$, $\forall k>0$;
\item $\widehat{w}[0] \in \mathbb{R}$ and $\widehat{w}[0] \sim \mathcal{N}(0, 1)$;
\item $\Re(\widehat{w}[k]), \ \Im(\widehat{w}[k])$, and $\widehat{w}[0]$ are independent.
\end{itemize}

\smallskip
\noindent Put differently, for any nonzero frequency $k$, $\mathbb{E}[\widehat{w}[k]^2]=0$ and ${\mathbb{E}[\widehat{w}[k] \overline{\widehat{w}[k]}]=1}$. This means that $\widehat{w}[k]$, $k\neq 0$, follows a complex normal distribution with mean 0, covariance $1$, and pseudo-covariance $0$~\cite{Goodman1963}.
\begin{table*}
\caption{Gaussian bridges for several operators.}\label{Table: Gaussian Bridges}
\vspace*{-0.3cm}
\begin{center}
\begin{tabularx}{\textwidth}{M{1.2cm} | Y  | Y  | Y  | Y }
\hline
\hline
\multicolumn{1}{M{1cm}  |}{} & $\mathrm{D}+ \mathrm{I}$ & $\mathrm{D}$ & $\mathrm{D}^2+ 4 \pi^2 \mathrm{I}$ & $\mathrm{D}^2$\Tstrut\Bstrut\\
\hline
 $\widehat{L}[k]$ & $\mathrm{j}2\pi k +1$ & $\mathrm{j}2\pi k$ & $4\pi ^2(1-k^2)$  & $-4\pi^2k^2$\Tstrut\\
$\mathcal{N}_{\mathrm{L}}$ & $\mathrm{span}\{0\}$ & $\mathrm{span}\{e_0\}$ & $\mathrm{span}\{e_1, e_{-1}\}$ & $\mathrm{span}\{e_0\}$\Tstrut\Bstrut\\
Gaussian bridges \tiny{$\gamma_0^2=1$} &
	\begin{minipage}{0.5\textwidth}
 		 \includegraphics[width=0.4\columnwidth]{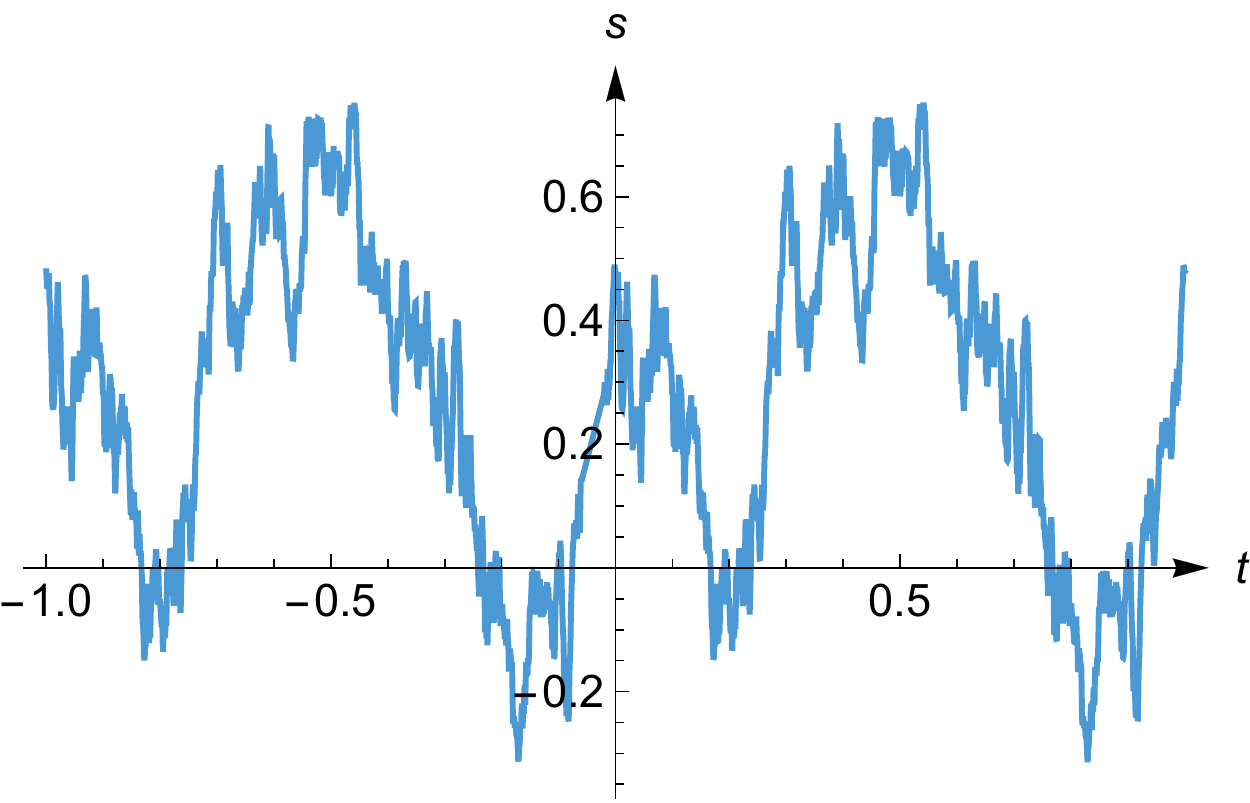} 
  	\end{minipage}& 
  	 \begin{minipage}{0.5\textwidth}
   		 \includegraphics[width=0.4\columnwidth]{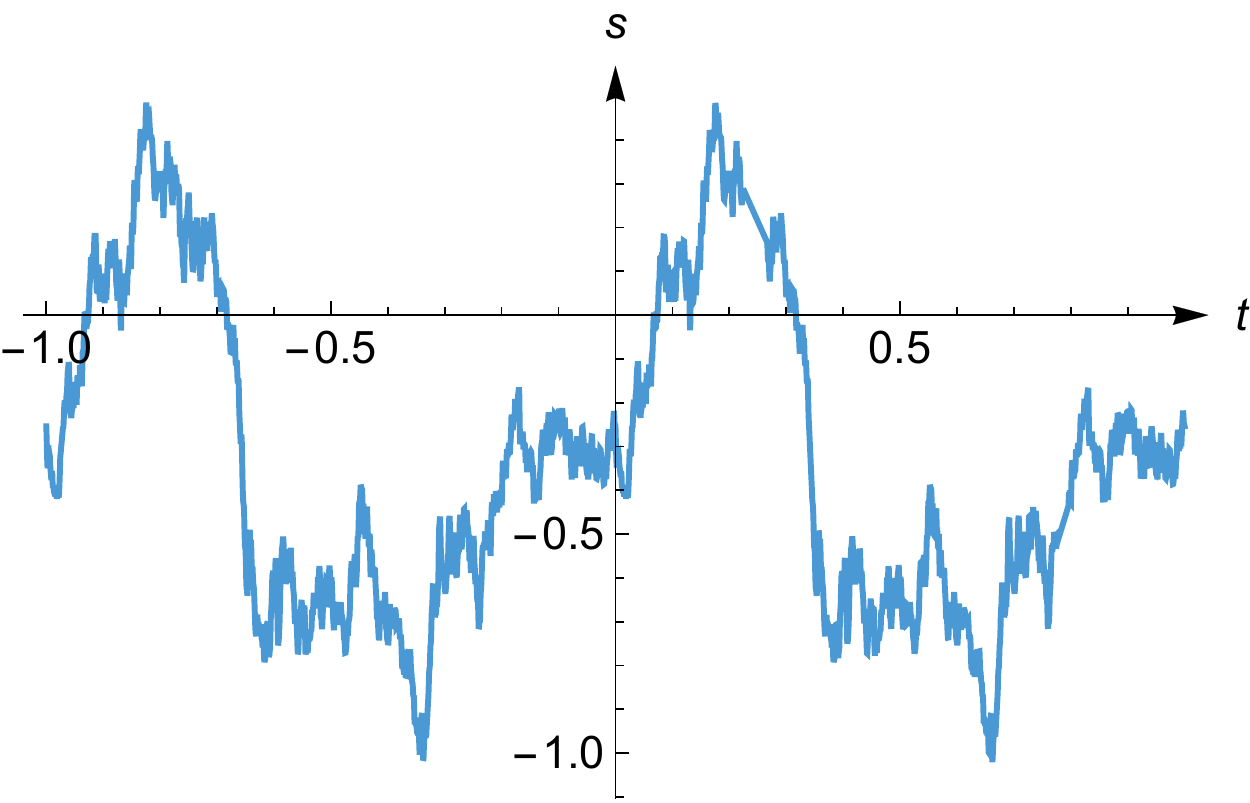}
 	\end{minipage}& 
  	 \begin{minipage}{0.5\textwidth}
   	    \includegraphics[width=0.4\columnwidth]{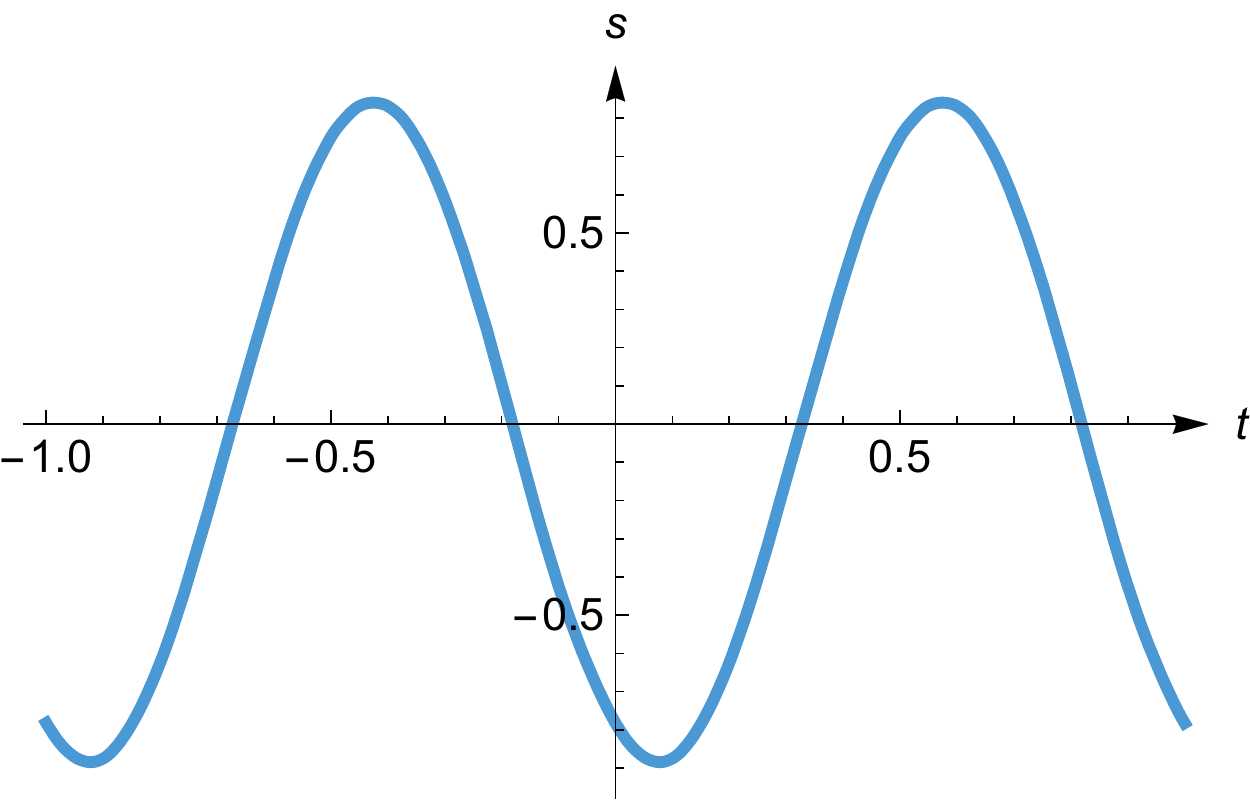}
   \end{minipage}&
   \begin{minipage}{0.5\textwidth}
   	\includegraphics[width=0.4\columnwidth]{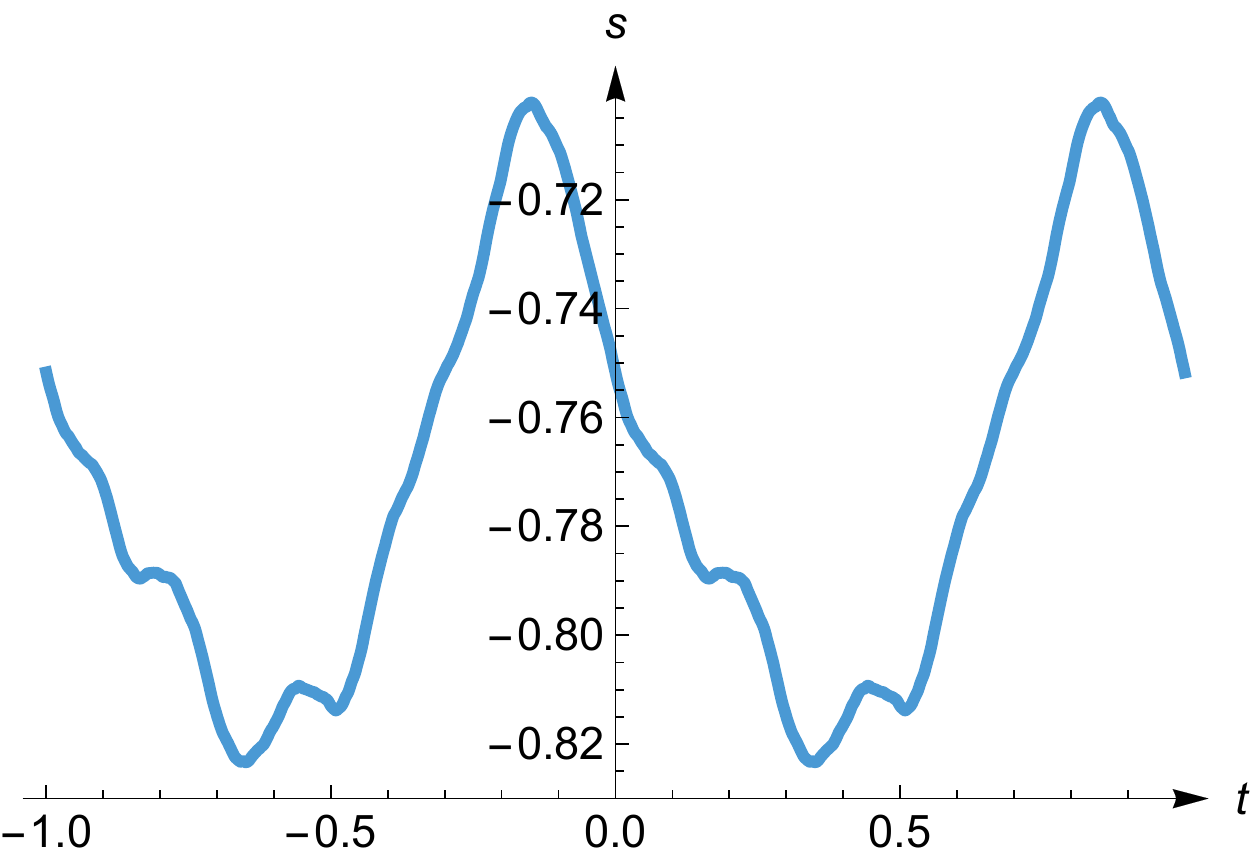}
   \end{minipage}\Tstrut\Bstrut\\
\hline
\hline
\end{tabularx}
\end{center}\vspace*{-0.2cm}
\end{table*}

When $\mathrm{L}$ has a nontrivial null space, there is no hope to construct a periodic process $s$ solution of~\eqref{eq:Ls=w} with $w$ a periodic Gaussian white noise. Indeed, the operator $\mathrm{L}$ kills the null-space frequencies, which contradicts that $\widehat{w}[k_n] \neq 0$ almost surely for $n=1\ldots N_0$. One should adapt~\eqref{eq:Ls=w} accordingly by giving special treatment to the null-space frequencies. 
We propose here to consider a new class of periodic Gaussian processes: the \emph{Gaussian bridges}. Given some operator $\mathrm{L}$ and $\gamma_0 >0$, we set
\begin{equation}\label{eq:Lgamma}
\mathrm{L}_{\gamma_0} = \mathrm{L} + \gamma_0 \mathrm{Proj}_{\mathcal{N}_{\mathrm{L}}},
\end{equation}
where $\mathrm{Proj}_{\mathcal{N}_{\mathrm{L}}}$ is given by~\eqref{eq: Proj null space}. Note that $\mathrm{L}_{\gamma_0} = \mathrm{L}$ for any $\gamma_0$ when the null space of $\mathrm{L}$ is trivial.
Moreover, we remark that
\begin{equation}
	\lVert \mathrm{L}_{\gamma_0} f \rVert_{L_2}^2 = 	\lVert \mathrm{L}  f \rVert_{L_2}^2 + 	\gamma_0^2 \lVert \mathrm{Proj}_{\mathcal{N}_{\mathrm{L}}}  \{f\} \rVert_{L_2}^2 = 	\lVert  f \rVert_{\mathcal{H}_{\mathrm{L}}}^2,
\end{equation}
where $\lVert f \rVert_{\mathcal{H}_{\mathrm{L}}}^2 = \langle f, f \rangle_{\mathcal{H}_{\mathrm{L}}}$ is given in~\eqref{eq: inner product} (with $\gamma =\gamma_0$).

\begin{definition}\label{def: model process}
A \emph{Gaussian bridge} is a periodic Gaussian process $s$, solution to the stochastic differential equation
\begin{equation} \label{eq:GB}
	\mathrm{L}_{\gamma_0} s = w,
\end{equation}
with $w$ a periodic Gaussian white noise and $\mathrm{L}_{\gamma_0}$ given by~\eqref{eq:Lgamma} for some LSI operator $L$ with finite-dimensional null space and ${\gamma_0} > 0$. We summarize this situation with the notation $s\sim \mathcal{GB}(\mathrm{L}, \gamma_0^2)$. When the null space is trivial, in which case the parameter $\gamma_0^2$ is immaterial, we write $s \sim\mathcal{GB}(\mathrm{L})$. 
\end{definition}

\noindent The Gaussian-bridge terminology is inspired by the Brownian bridge, the periodic version of the Brownian motion\footnote{Our definition differs from the classical one, in which the Brownian bridge is zero at the origin instead of being zero-mean~\cite{Revuz2013continuous}.}.
Several realizations of our Gaussian bridges for various operators are shown in Table~\ref{Table: Gaussian Bridges} for $\gamma_0^2=1$. 
The influence of the parameter $\gamma_0^2$ is illustrated in Figure~\ref{fig: influence of gamma0}. 

\begin{prop} \label{prop:covariancebridge}
	The covariance function of the Gaussian bridge $s \sim \mathcal{GB}(\mathrm{L}, \gamma_0^2)$ is
	\begin{equation}\label{eq: covariance GB}
		r_s(t,\tau) = h_{\gamma_0} (t-\tau),
	\end{equation}
	where $h_{\gamma_0}$ is defined in~\eqref{eq: reproducing kernel}. It implies that
\begin{equation}\label{eq: esperance GB}
\mathbb{E} [\langle s , f \rangle \langle s , g \rangle ] =\langle h_{\gamma_0} \ast f,g\rangle.
\end{equation}
In particular, we have that
\begin{equation} \label{eq:variancesk}
\mathbb{E} [\lvert \widehat{s} [k]\rvert^2 ] = \widehat{h}_{\gamma_0} [k].
\end{equation}
\end{prop}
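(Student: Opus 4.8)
The plan is to compute the covariance function of $s \sim \mathcal{GB}(\mathrm{L},\gamma_0^2)$ directly in the Fourier domain, using the defining equation $\mathrm{L}_{\gamma_0} s = w$ and the explicit description of the Fourier coefficients of the periodic Gaussian white noise $w$. First I would apply~\eqref{eq: LSI} and~\eqref{eq:Lgamma} to get the frequency response of $\mathrm{L}_{\gamma_0}$: for $k \in \mathcal{K}_{\mathrm{L}}$ it equals $\widehat{L}[k]$ (which is nonzero there), while for $k = k_n$ it equals $\gamma_0$ (since $\widehat{L}[k_n]=0$ and the projection contributes $\gamma_0$ on that frequency). In particular $\mathrm{L}_{\gamma_0}$ is invertible, so $s = \mathrm{L}_{\gamma_0}^{-1} w$ is well-defined, and taking Fourier coefficients gives $\widehat{s}[k] = \widehat{w}[k] / \widehat{L}_{\gamma_0}[k]$, i.e. $\widehat{s}[k_n] = \widehat{w}[k_n]/\gamma_0$ and $\widehat{s}[k] = \widehat{w}[k]/\widehat{L}[k]$ for $k \in \mathcal{K}_{\mathrm{L}}$.

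Next I would use the second-moment structure of $w$ recorded just before the statement: $\mathbb{E}[\widehat{w}[k]\,\overline{\widehat{w}[\ell]}] = \delta_{k,\ell}$ and $\mathbb{E}[\widehat{w}[k]\,\widehat{w}[\ell]] = \delta_{k,-\ell}$ (equivalently, the coefficients are uncorrelated across distinct non-conjugate frequencies, with unit covariance). Combining this with the expression for $\widehat{s}[k]$ yields
\begin{equation}
\mathbb{E}\big[\widehat{s}[k]\,\overline{\widehat{s}[\ell]}\big] = \frac{\delta_{k,\ell}}{|\widehat{L}_{\gamma_0}[k]|^2},
\end{equation}
so in particular $\mathbb{E}[|\widehat{s}[k]|^2] = 1/\gamma_0^2$ on the null-space frequencies and $1/|\widehat{L}[k]|^2$ elsewhere, which is exactly $\widehat{h}_{\gamma_0}[k]$ read off from~\eqref{eq: reproducing kernel}; this establishes~\eqref{eq:variancesk}. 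Plugging the Fourier expansion $s(t) = \sum_k \widehat{s}[k] e_k(t)$ into the defining relation~\eqref{eq: esp Gaussian process} for the covariance and using the moments above collapses the double sum to a single sum $\sum_k \widehat{h}_{\gamma_0}[k]\, e_k(t)\overline{e_k(\tau)} = \sum_k \widehat{h}_{\gamma_0}[k]\, e_k(t-\tau) = h_{\gamma_0}(t-\tau)$, which is~\eqref{eq: covariance GB}. Finally,~\eqref{eq: esperance GB} follows by substituting $r_s(t,\tau) = h_{\gamma_0}(t-\tau)$ into~\eqref{eq: esp Gaussian process} and recognizing the inner convolution via~\eqref{eq: periodic convolution}, namely $\int_0^1 h_{\gamma_0}(t-\tau) g(\tau)\,\mathrm{d}\tau$ paired against $f$.

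The only genuinely delicate point is the rigorous justification of the termwise manipulations: $s$ is a priori only a generalized random process, so one must argue that $\mathrm{L}_{\gamma_0}^{-1}$ is well-defined and continuous on the relevant space (this uses that $(\widehat{L}[k])$ is of slow growth and bounded below on $\mathcal{K}_{\mathrm{L}}$ away from the finitely many null frequencies, as already noted in Section~\ref{subsec:LSIop}), and that the Fourier series for $s$ converges in the appropriate sense so that the interchange of $\mathbb{E}$ with the sums, and with the duality pairings $\langle s,f\rangle$, $\langle s,g\rangle$, is legitimate. I would handle this by working with the pairings $\langle s,f\rangle = \sum_k \widehat{s}[k]\,\overline{\widehat{f}[k]}$ for $f \in \mathcal{S}(\mathbb{T})$ (where $\widehat{f}[k]$ decays rapidly), computing $\mathbb{E}[\langle s,f\rangle\langle s,g\rangle]$ as an absolutely convergent double sum, and only then identifying the result with~\eqref{eq: esperance GB}; everything else is bookkeeping with the white-noise moments.
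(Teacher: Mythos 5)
Your proof is correct and follows essentially the same route as the paper: both reduce to the Fourier-domain representation $\widehat{s}[k_n]=\widehat{w}[k_n]/\gamma_0$, $\widehat{s}[k]=\widehat{w}[k]/\widehat{L}[k]$ for $k\in\mathcal{K}_{\mathrm{L}}$, and use the white-noise second moments to collapse the double sum into $h_{\gamma_0}(t-\tau)$. The only differences are cosmetic (you derive this representation from $\mathrm{L}_{\gamma_0}s=w$ and establish~\eqref{eq:variancesk} before the covariance, whereas the paper starts from the representation and deduces~\eqref{eq: esperance GB} and~\eqref{eq:variancesk} afterwards), and your explicit handling of $\mathbb{E}[\widehat{w}[k]\widehat{w}[\ell]]=\delta_{k,-\ell}$ is if anything slightly more careful than the paper's.
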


\noindent The proof of Proposition~\ref{prop:covariancebridge} is given in Appendix~\ref{app:covariance}. An important consequence is that a Gaussian bridge is stationary since its covariance function only depends on the difference $(t-\tau)$. 

\subsection{Measurement Model and MMSE Estimator}

For this section, we restrict ourselves to operators $\mathrm{L}$ for which the native space $\mathcal{H}_{\mathrm{L}}$ is a RKHS.
In that case, using~\eqref{eq:variancesk} and~\eqref{eq: reproducing kernel}, the Gaussian bridge $s$ satisfies
\begin{equation}
\mathbb{E} [ \lVert s \rVert_{L_2}^2]
= \sum_{k\in  \mathbb{Z}} \mathbb{E} [\lvert \widehat{s} [k]\rvert^2 ] = \sum_{k \in \mathcal{K}_{\mathrm{L}}} \frac{1}{\lvert \widehat{L} [k] \rvert^2} + \sum_{n=1}^{N_0} \frac{1}{\gamma_0^2},
\end{equation}
which is finite according to~\eqref{eq:condition RKHS}. Therefore, the Gaussian bridge $s$ is (almost surely) square-integrable.

The observed data $\mathbf{y}$ are assumed to be generated as
\begin{equation}\label{eq: noisy measurement}
	\mathbf{y}= \langle \boldsymbol\nu , s \rangle + \boldsymbol{\epsilon},
\end{equation}
where $s \sim \mathcal{GB}(\mathrm{L}, \gamma_0^2)$ is a Gaussian bridge (see Definition~\ref{def: model process}), $\boldsymbol{\nu} = ( \nu_1, \ldots , \nu_M) $ is a vector of $M$ linear measurement functions, and $\boldsymbol{\epsilon}$ are independent random perturbations such that $\boldsymbol{\epsilon}\sim \mathcal{N}(\mathbf{0}, \sigma_0^2 \mathbf{I})$.
Given $\mathbf{y}$ in~\eqref{eq: noisy measurement}, we want to find the estimator $\tilde{s}$ of the Gaussian bridge $s$, imposing that it minimizes the quantity $\mathbb{E} [ \lVert s - \tilde{s} \rVert_2^2]$. 

\begin{thm}\label{th: MMSE}
Let $\mathbf{y}=(y_1, \ldots ,y_M)$ be the noisy measurement vector~\eqref{eq: noisy measurement} of the Gaussian bridge $s \sim \mathcal{GB}(\mathrm{L}, \gamma_0^2)$, with measurement functions $\nu_m \in \mathcal{H}_{\mathrm{L}}'$, $m=1\ldots M$. Then, 
the MMSE estimator of $s$ given the samples $\{y_m\}_{m \in [1 \ldots M]}$ is 
\begin{equation}
\tilde{s}_{\mathrm{MMSE}}(t)=\sum_{m=1}^M d_m \varphi_m(t),
\end{equation}
\noindent where $\varphi_m=h_{\gamma_0} \ast \nu_m$ with $\nu_m \in \mathcal{H}_{\mathrm{L}}'$, $\mathbf{d}=(d_1, \ldots ,d_M)=(\mathbf{G}+ \sigma_0^2 \mathbf{I})^{-1}\mathbf{y}$, and $\mathbf{G}$ is the Gram matrix defined in~\eqref{eq:matrixform}.
\end{thm}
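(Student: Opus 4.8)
\textbf{Proof plan for Theorem~\ref{th: MMSE}.}

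The plan is to exploit the fact that, for jointly Gaussian quantities, the MMSE estimator coincides with the linear least-squares (LMMSE) estimator, so that it suffices to compute a conditional expectation over a Gaussian vector. First I would observe that $s$ and the data vector $\mathbf{y}$ are jointly Gaussian: $s \sim \mathcal{GB}(\mathrm{L},\gamma_0^2)$ is Gaussian by Definition~\ref{def: model process}, the map $s \mapsto \langle \boldsymbol\nu, s\rangle$ is linear and well-defined since $\nu_m \in \mathcal{H}_{\mathrm{L}}'$ and $s \in \mathcal{H}_{\mathrm{L}}$ almost surely, and $\boldsymbol\epsilon$ is an independent Gaussian vector. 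Consequently the pointwise MMSE estimator of $s(t)$ for each fixed $t$ is $\tilde s_{\mathrm{MMSE}}(t) = \mathbb{E}[s(t)\mid \mathbf{y}] = \mathrm{Cov}(s(t),\mathbf{y})\,\mathrm{Cov}(\mathbf{y})^{-1}\mathbf{y}$, and minimizing $\mathbb{E}[\lVert s-\tilde s\rVert_{L_2}^2]$ over all (measurable) estimators $\tilde s(\cdot\mid\mathbf{y})$ is achieved by this pointwise conditional mean. One small point to justify is that taking the conditional expectation commutes with the $L_2$ structure, i.e. that $t\mapsto \mathbb{E}[s(t)\mid\mathbf{y}]$ is indeed the minimizer of the integrated mean-square error; this follows from Fubini/Tonelli together with the fact that $\mathbb{E}[\lVert s\rVert_{L_2}^2]<\infty$, which was established just before the theorem using~\eqref{eq:condition RKHS}.

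Next I would compute the two covariance quantities explicitly using Proposition~\ref{prop:covariancebridge}. For the cross-covariance, $\mathrm{Cov}(s(t),y_m) = \mathbb{E}[s(t)\,\langle\nu_m,s\rangle] = \int_0^1 r_s(t,\tau)\nu_m(\tau)\,\mathrm{d}\tau = \int_0^1 h_{\gamma_0}(t-\tau)\nu_m(\tau)\,\mathrm{d}\tau = (h_{\gamma_0}\ast\nu_m)(t) = \varphi_m(t)$, where the cross terms with $\boldsymbol\epsilon$ vanish by independence. For the data covariance, $[\mathrm{Cov}(\mathbf{y})]_{m_1,m_2} = \mathbb{E}[\langle\nu_{m_1},s\rangle\langle\nu_{m_2},s\rangle] + \sigma_0^2[\mathbf{I}]_{m_1,m_2}$; by~\eqref{eq: esperance GB} (or directly from~\eqref{eq: covariance GB}) the first term is $\int_0^1\int_0^1\nu_{m_1}(t)h_{\gamma_0}(t-\tau)\nu_{m_2}(\tau)\,\mathrm{d}t\,\mathrm{d}\tau = [\mathbf{G}]_{m_1,m_2}$, the Gram matrix of~\eqref{eq:Chgamma}. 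Hence $\mathrm{Cov}(\mathbf{y}) = \mathbf{G} + \sigma_0^2\mathbf{I}$. Substituting into the conditional-mean formula gives $\tilde s_{\mathrm{MMSE}}(t) = \sum_{m=1}^M \varphi_m(t)\,[(\mathbf{G}+\sigma_0^2\mathbf{I})^{-1}\mathbf{y}]_m = \sum_{m=1}^M d_m\varphi_m(t)$ with $\mathbf{d} = (\mathbf{G}+\sigma_0^2\mathbf{I})^{-1}\mathbf{y}$, which is the claimed form.

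The main obstacle, and the step requiring the most care, is the invertibility of $\mathbf{G}+\sigma_0^2\mathbf{I}$ — though here it is mild, since $\mathbf{G}$ is a Gram-type matrix and hence positive semidefinite (it equals $\mathbb{E}[\mathbf{v}\mathbf{v}^\mathsf{T}]$ for the Gaussian vector $\mathbf{v} = \langle\boldsymbol\nu,s\rangle$), so $\mathbf{G}+\sigma_0^2\mathbf{I}\succeq\sigma_0^2\mathbf{I}\succ 0$ is automatically invertible for $\sigma_0>0$; I would remark on this explicitly. A secondary technical point worth addressing is the rigorous meaning of the pairings $\langle\nu_m,s\rangle$ and of $\varphi_m = h_{\gamma_0}\ast\nu_m$ as genuine functions: since $\mathcal{H}_{\mathrm{L}}$ is assumed to be a RKHS, $\nu_m\in\mathcal{H}_{\mathrm{L}}'$ acts continuously on $\mathcal{H}_{\mathrm{L}}$, $h_{\gamma_0}$ is the reproducing kernel by Proposition~\ref{prop: RKHS}, and the convolution $h_{\gamma_0}\ast\nu_m$ is well-defined in $\mathcal{S}'(\mathbb{T})$ and in fact lies in $\mathcal{H}_{\mathrm{L}}$, so that the Fubini exchange of $\mathbb{E}$ with the spatial integrals in the covariance computations is justified. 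Everything else is the standard Gaussian-conditioning calculation.
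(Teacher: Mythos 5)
Your proposal is correct and follows essentially the same route as the paper: reduce to pointwise estimation of $s(t_0)$, use Gaussianity to identify the MMSE estimator with the linear one (you phrase this via the Gaussian conditional-mean formula, the paper via the orthogonality principle, which is the same computation), evaluate the cross-covariance $\varphi_m(t)$ and the data covariance $\mathbf{G}+\sigma_0^2\mathbf{I}$ from Proposition~\ref{prop:covariancebridge}, and conclude with the Fubini argument that the pointwise conditional mean minimizes the integrated mean-square error. Your explicit remark that $\mathbf{G}+\sigma_0^2\mathbf{I}\succeq \sigma_0^2\mathbf{I}$ is invertible is a small welcome addition that the paper leaves implicit.
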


\noindent The proof is given in Appendix~\ref{app: MMSE}. 
{Theorem~\ref{th: MMSE} can be seen as a generalization of the classical Wiener filtering, designed for discrete signals, to the hybrid case where the input signal is in a (periodic) continuous-domain and the (finite-dimensional) measurements are discrete.}
A leading theme of this paper is that the form of the MMSE estimator $\tilde{s}_{\mathrm{MMSE}}$ is very close to the one of the solution of the representer theorem $f_{\mathrm{RT}}$ with $\lambda = \sigma_0^2$ and for a quadratic cost function. This connection is exploited in Section~\ref{sec:discussion}. 

% %%%%%%%%%%%%%%%%%%%%%%%%%%%%%%%%%%%%%%%%%%%%%%%%%%%%%%%%%%%%%%%%%%%%%%%%%%%%%
\subsection{MMSE Estimation as a Representer Theorem}\label{sec:discussion}
% %%%%%%%%%%%%%%%%%%%%%%%%%%%%%%%%%%%%%%%%%%%%%%%%%%%%%%%%%%%%%%%%%%%%%%%%%%%%%%%
The MMSE estimator given in Theorem~\ref{th: MMSE} can be interpreted as the solution of the optimization problem described in Proposition~\ref{prop: opt pb}.

\begin{prop}\label{prop: opt pb}
Consider an LSI operator $\mathrm{L}$ with finite-dimensional null space, $\gamma >0$, and $\nu_m \in \mathcal{H}_{\mathrm{L}}'$ for ${m=1\ldots M}$. We set $\mathrm{L}_{\gamma}$ as in~\eqref{eq:Lgamma}. 
Then, the solution of the optimization problem
\begin{equation} \label{eq:L2reguWithGamma}
\underset{f \in \mathcal{H}_{\mathrm{L}}}{\min} \bigg ( \sum\limits_{m=1}^M (y_m-\langle f,\nu_m \rangle )^2 +\lambda   \| \mathrm{L}_\gamma f \|_{L_2}^2   \bigg )
\end{equation}
exists, is unique, and given by
\begin{equation} \label{eq:optigamma} 
f_\mathrm{opt}(t)= \sum\limits_{m=1}^M d_m \varphi_m(t),
\end{equation}
where $\varphi_m=h_\gamma \ast \nu_m$ and $\mathbf{d}=(d_1, \ldots ,d_M)=(\mathbf{G}+ \lambda \mathbf{I})^{-1}\mathbf{y}$.
% and $h_{\gamma}$ is given by~\eqref{eq: reproducing kernel}. 
 In particular, the unique minimizer of~\eqref{eq:L2reguWithGamma} is the MMSE estimator given in Theorem~\ref{th: MMSE} for $\lambda = \sigma^2_0$ and $\gamma = \gamma_0$.
\end{prop}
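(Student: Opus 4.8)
The idea is to reduce~\eqref{eq:L2reguWithGamma} to the quadratic instance of the representer theorem already solved in Theorem~\ref{th: periodic representer thm} and Proposition~\ref{prop: interpolation pb}, but applied to the auxiliary operator $\mathrm{L}_{\gamma}$ of~\eqref{eq:Lgamma} instead of $\mathrm{L}$. First I would record the elementary identity (obtained from~\eqref{eq:Lgamma}, the relation $\mathrm{L}_{\gamma} e_{k_n} = \gamma e_{k_n}$, and the Parseval relation~\eqref{eq: Parseval}) that $\| \mathrm{L}_{\gamma} f \|_{L_2}^2 = \| \mathrm{L} f \|_{L_2}^2 + \gamma^2 \| \mathrm{Proj}_{\mathcal{N}_{\mathrm{L}}}\{f\} \|_{L_2}^2 = \| f \|_{\mathcal{H}_{\mathrm{L}}}^2$, where the last norm is the one induced by~\eqref{eq: inner product} with balancing parameter $\gamma$. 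Consequently the regularizer in~\eqref{eq:L2reguWithGamma} is exactly $\lambda \| f \|_{\mathcal{H}_{\mathrm{L}}}^2$, so that~\eqref{eq:L2reguWithGamma} is precisely the optimization problem~\eqref{eq: periodic representer thm} written for the operator $\mathrm{L}_{\gamma}$ with the quadratic cost~\eqref{eq: quadratic F}.

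Next I would verify that $\mathrm{L}_{\gamma}$ fulfils the hypotheses of Theorem~\ref{th: periodic representer thm}. It is continuous and LSI: its frequency response equals $\widehat{L}[k]$ on $\mathcal{K}_{\mathrm{L}}$ and $\gamma$ on $\{k_n\}_{n=1}^{N_0}$, hence differs from $(\widehat{L}[k])$ in only finitely many entries and still has slow growth. Since $\widehat{L_{\gamma}}[k]$ never vanishes, the null space of $\mathrm{L}_{\gamma}$ is trivial, so the transversality condition $\mathcal{N}_{\mathrm{L}_{\gamma}} \cap \mathcal{N}_{\boldsymbol\nu} = \{0\}$ holds for free. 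The search space~\eqref{eq: search space} attached to $\mathrm{L}_{\gamma}$ coincides with $\mathcal{H}_{\mathrm{L}}$ (the two defining series differ in finitely many terms), and by the identity above the Hilbert norm $\| \mathrm{L}_{\gamma} \cdot \|_{L_2}$ that Proposition~\ref{prop: inner product} puts on it is exactly $\| \cdot \|_{\mathcal{H}_{\mathrm{L}}}$; hence the two spaces share the same continuous dual, and $\nu_m \in \mathcal{H}_{\mathrm{L}}' = \mathcal{H}_{\mathrm{L}_{\gamma}}'$ for every $m$. Finally, inserting the trivial null space of $\mathrm{L}_{\gamma}$ into the kernel formula~\eqref{eq: reproducing kernel} yields $\sum_{k \in \mathbb{Z}} e_k / |\widehat{L_{\gamma}}[k]|^2 = \sum_{n=1}^{N_0} e_{k_n}/\gamma^2 + \sum_{k \in \mathcal{K}_{\mathrm{L}}} e_k / |\widehat{L}[k]|^2 = h_{\gamma}$, so the generating functions produced by the theorem are indeed $\varphi_m = h_{\gamma} \ast \nu_m$.

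With all hypotheses in place, Theorem~\ref{th: periodic representer thm} together with Proposition~\ref{prop: interpolation pb} applied to $\mathrm{L}_{\gamma}$ give existence and uniqueness of the minimizer. Because $\mathrm{L}_{\gamma}$ has no null space, the sum $\sum_n b_n e_{k_n}$ in~\eqref{eq:formsolutionRT} is empty and $\mathbf{P}$ is the empty $M \times 0$ matrix, so the saddle-point system~\eqref{eq:matrixform} collapses to $\mathbf{d} = (\mathbf{G} + \lambda \mathbf{I})^{-1} \mathbf{y}$ with $\mathbf{G}$ the Gram matrix~\eqref{eq:Chgamma} built from $h_{\gamma}$ (which is positive semidefinite, so $\mathbf{G} + \lambda \mathbf{I}$ is invertible for $\lambda > 0$); this is exactly~\eqref{eq:optigamma}. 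For the last assertion I would set $\gamma = \gamma_0$ and $\lambda = \sigma_0^2$ and compare~\eqref{eq:optigamma} term by term with Theorem~\ref{th: MMSE}: the basis functions $h_{\gamma_0} \ast \nu_m$, the Gram matrix, and the coefficient vector $(\mathbf{G} + \sigma_0^2 \mathbf{I})^{-1}\mathbf{y}$ all agree, whence $f_{\mathrm{opt}} = \tilde{s}_{\mathrm{MMSE}}$. I expect the only delicate part to be the bookkeeping of the second paragraph --- checking that substituting $\mathrm{L}_{\gamma}$ for $\mathrm{L}$ is legitimate (admissibility of $\mathrm{L}_{\gamma}$, the equality $\mathcal{H}_{\mathrm{L}_{\gamma}} = \mathcal{H}_{\mathrm{L}}$ with matching norm and dual, and that the general machinery reproduces $h_{\gamma}$ and not a rescaled kernel); once these points are settled, the statement follows by quoting results already established.
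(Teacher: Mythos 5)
Your proposal is correct and takes essentially the same route as the paper: both hinge on the identity $\lVert \mathrm{L}_\gamma f \rVert_{L_2}^2 = \lVert f \rVert_{\mathcal{H}_{\mathrm{L}}}^2$ and then run the machinery of Theorem~\ref{th: periodic representer thm} and Proposition~\ref{prop: interpolation pb} in the simplified situation of a trivial null space, so that the $b_n$ terms and the matrix $\mathbf{P}$ disappear and the system collapses to $\mathbf{d} = (\mathbf{G}+\lambda \mathbf{I})^{-1}\mathbf{y}$, after which the identification with the MMSE estimator for $\gamma=\gamma_0$, $\lambda=\sigma_0^2$ is immediate. The only difference is organizational: the paper re-derives the steps directly on $\mathcal{H}_{\mathrm{L}}$ via its Riesz map, whereas you invoke the earlier results as a black box for the auxiliary operator $\mathrm{L}_\gamma$, which is legitimate precisely because of the verifications you list (admissibility of $\mathrm{L}_\gamma$, equality of the search spaces, norms and duals, and the identification of the kernel with $h_\gamma$).
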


\smallskip
\noindent The proof of Proposition~\ref{prop: opt pb} follows the same steps as the ones of Theorem~\ref{th: periodic representer thm} (form of the minimizer for the periodic representer theorem) and Proposition~\ref{prop: interpolation pb} (explicit formulas in terms of system matrix for the vectors $\mathbf{a}$ and $\mathbf{b}$), with significant simplifications that are detailed in Appendix~\ref{sec:proofRTgamma}. Proposition~\ref{prop: opt pb} has obvious similarities with Theorem~\ref{th: periodic representer thm}, but it also adds new elements.

\smallskip
\begin{itemize}[leftmargin=*]
\item Proposition~\ref{prop: opt pb} gives an interpretation of the MMSE estimator of a Gaussian bridge given its measurements as the solution to an optimization problem. This problem is very close to the periodic representer theorem (Theorem~\ref{th: periodic representer thm}) for a quadratic cost function. However,~\eqref{eq:L2reguWithGamma} differs from~\eqref{eq: periodic representer thm} because the regularization also penalizes null-space frequencies.

\smallskip
\item If the null space $\mathcal{N}_{\mathrm{L}}$ is trivial, then 
\begin{equation}\label{eq:equivalence}
f_{\mathrm{RT}} = \tilde{s}_{\mathrm{MMSE}}
\end{equation}
 for ${\lambda=\sigma_0^2}$. This means that Theorem~\ref{th: periodic representer thm} (smoothing approach) and~\ref{th: MMSE} (statistical approach) correspond to the same reconstruction method. This equivalence is well-known for stationary processes on $\mathbb{R}$ in the case of time-domain sampling measurements~\cite{Wahba1990spline}. 
Our results extend this to the periodic setting and to the case of generalized linear measurements.

\smallskip
\item If the null space is nontrivial, then Theorem~\ref{th: periodic representer thm} and Proposition~\ref{prop: opt pb} yield different reconstructions. In particular, this implies that one cannot interpret the optimizer $f_{\mathrm{RT}}$ in Theorem~\ref{th: periodic representer thm} as the MMSE estimator of a Gaussian bridge. Yet, the solutions get closer and closer as $\gamma_0 \rightarrow 0$. In Section~\ref{sec:simulations}, we investigate more deeply this situation.
\end{itemize}

% %%%%%%%%%%%%%%%%%%%%%%%%%%%%%%%%%%%%%%%%%%%%%%%%%%%%%%%%%%%%%%%%%%%%%%%%%%%%%
\section{Quality of the Estimators on Simulations}\label{sec:simulations}
% %%%%%%%%%%%%%%%%%%%%%%%%%%%%%%%%%%%%%%%%%%%%%%%%%%%%%%%%%%%%%%%%%%%%%%%%%%%%%%%
We consider
$\tilde{s}_{\gamma, \lambda}(t|\mathbf{y})=\sum_{m=1}^M d_m \varphi_m(t)$ as the linear estimator of $s$ given $\mathbf{y}$, where $\varphi_m=h_{\gamma} \ast \nu_m$, $\mathbf{d}=(\mathbf{G}+ \lambda \mathbf{I})^{-1}\mathbf{y}$, and $\mathbf{G}$ is defined in Proposition~\ref{prop: interpolation pb}. 
To simplify notations, we shall omit $\mathbf{y}$ when considering $\tilde{s}_{\gamma,\lambda}( \cdot | \mathbf{y}) = \tilde{s}_{\gamma,\lambda}$.
Each pair $(\lambda,\gamma)$ gives an estimator. In particular, if $s$ is a Gaussian bridge, then ${\tilde{s}_{\mathrm{MMSE}}=\tilde{s}_{\gamma_0, \sigma_0^2}}$, according to Theorem~\ref{th: MMSE}. The mean-square error (MSE) of $\tilde{s}_{\gamma, \lambda}$ over $N$ experiments is computed as $\mbox{MSE}=\frac{1}{N}\sum_{n=1}^N \| s_n -\big(\tilde{s}_{\gamma,\lambda}\big)_n \|_{L_2}^2$, where the $s_n$ are independent realizations of $s$ that yield a new noisy measurement $\mathbf{y}_n$ and $\big(\tilde{s}_{\gamma, \lambda}\big)_n = \tilde{s}_{\gamma,\lambda}(\cdot |\mathbf{y}_n)$ is the estimator based on $\mathbf{y}_n$. We define the normalized mean-square error (NMSE) by
 \begin{equation}\label{eq: error lambda}
\mbox{NMSE}=\frac{\mbox{MSE}}{\frac{1}{N}\sum_{n=1}^N \| s_n \|_{L_2}^2}\approx \frac{\mathbb{E}[\| s-\tilde{s}_{\gamma,\lambda} \|_{L_2}^2]}{\mathbb{E}[\| s \|_{L_2}^2]}.
 \end{equation}

In this section, we first detail the generation of Gaussian bridges (Section~\ref{sec: generation gaussian bridge}). We then investigate the role of the parameters $\lambda$ (Section~\ref{sec: lambda}) and $\gamma^2$ (Section~\ref{sec: gamma}) on the quality of the estimator $\tilde{s}_{\gamma, \lambda}$. We primarily focus on time-domain sampling measurements with $\langle \boldsymbol{\nu} , s \rangle =(s(t_1), \ldots, s(t_M) )^{\mathsf{T}}$, where the $t_m$ are in $\mathbb{T}$.

\subsection{Generation of Gaussian Bridges}\label{sec: generation gaussian bridge}
We first fix the operator $\mathrm{L}$ with null space $\mathcal{N}_{\mathrm{L}}$ of dimension $N_0$ and $\gamma_0>0$. Then, we generate $(2N_{\mathrm{coef}}+1)$ Fourier coefficients $\{\widehat{w}[k]\}_{k\in [-N_{\mathrm{coef}} \ldots N_{\mathrm{coef}}]}$ of a Gaussian white noise according to Definition~\ref{def: Gaussian white noise}. Finally, we compute the Gaussian bridge $s$ as
\begin{align}\label{eq: Gaussian bridge}
s(t)
% &= \sum\limits_{k \in \mathcal{K}_\mathrm{L}} \frac{\widehat{w}[k]}{\widehat{L}[k] }e_k(t) + \sum\limits_{n=1}^{N_0} \alpha_{k_n}e_{k_n}(t)\nonumber\\
&=\sum\limits_{\substack{k \in \mathcal{K}_{\mathrm{L}} \\ |k|\leq N_{\mathrm{coef}}}} \frac{\widehat{w}[k]}{\widehat{L}[k] }e_k(t) + \sum\limits_{n=1}^{N_0} \frac{\widehat{w}[k_n]}{\gamma_0}e_{k_n}(t).
\end{align}
Since $N_0 < \infty$,~\eqref{eq: Gaussian bridge} provides a mere approximation of the Gaussian bridge. However, the approximation error can be made arbitrarily small by taking $N_{\mathrm{coef}}$ large enough.
In Figure~\ref{fig: influence of gamma0}, we generate $s \sim \mathcal{GB}(\mathrm{D}^2 + 4\pi^2 \mathrm{I}, \gamma_0^2 )$ for four values of $\gamma_0^2$.
For small values of $\gamma_0^2$, the null-space component dominates, which corresponds in this case to the frequency $|k|= 1$. When $\gamma_0^2$ increases, the null-space component has a weaker influence. 
 \begin{figure}
 \center
 	\subfigure[$\gamma_0^2=10^0$.]{
 		\includegraphics[width=0.4\columnwidth]{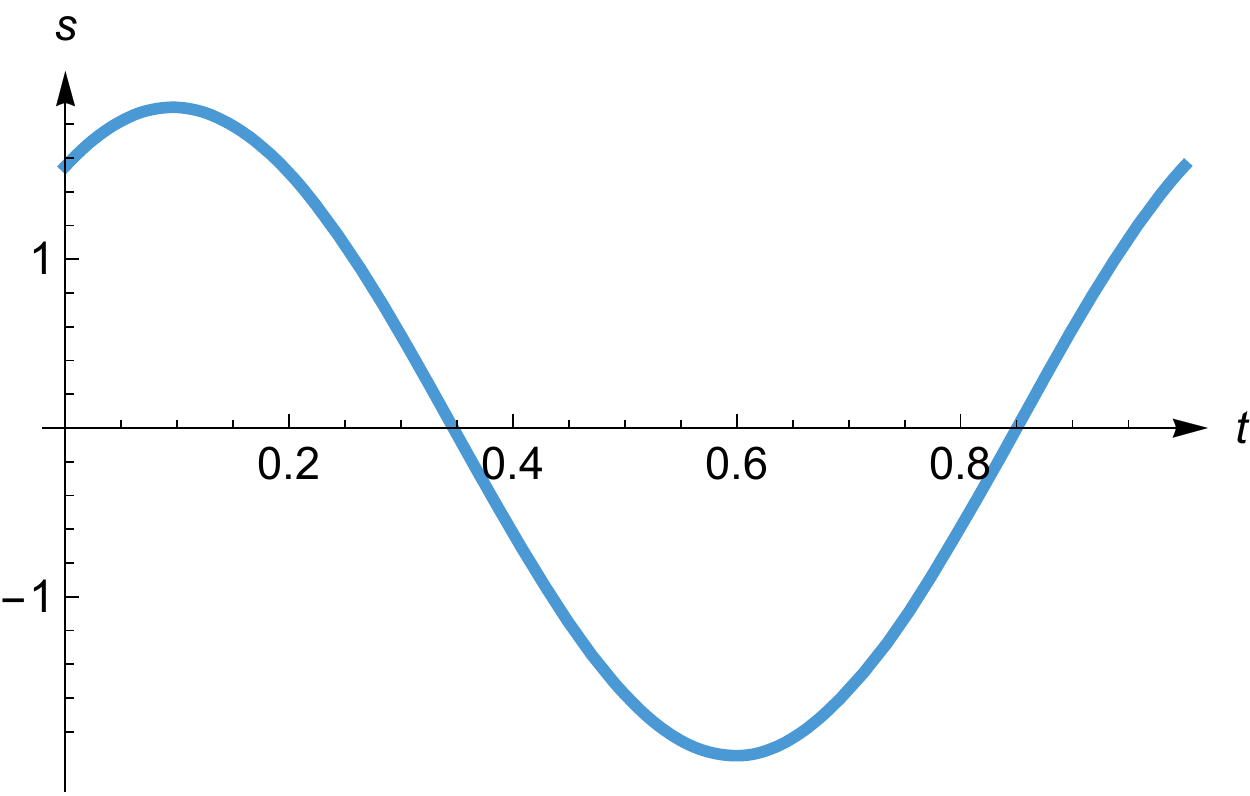}
 		}
 	\subfigure[$\gamma_0^2=10^2$.]{
 		\includegraphics[width=0.4\columnwidth]{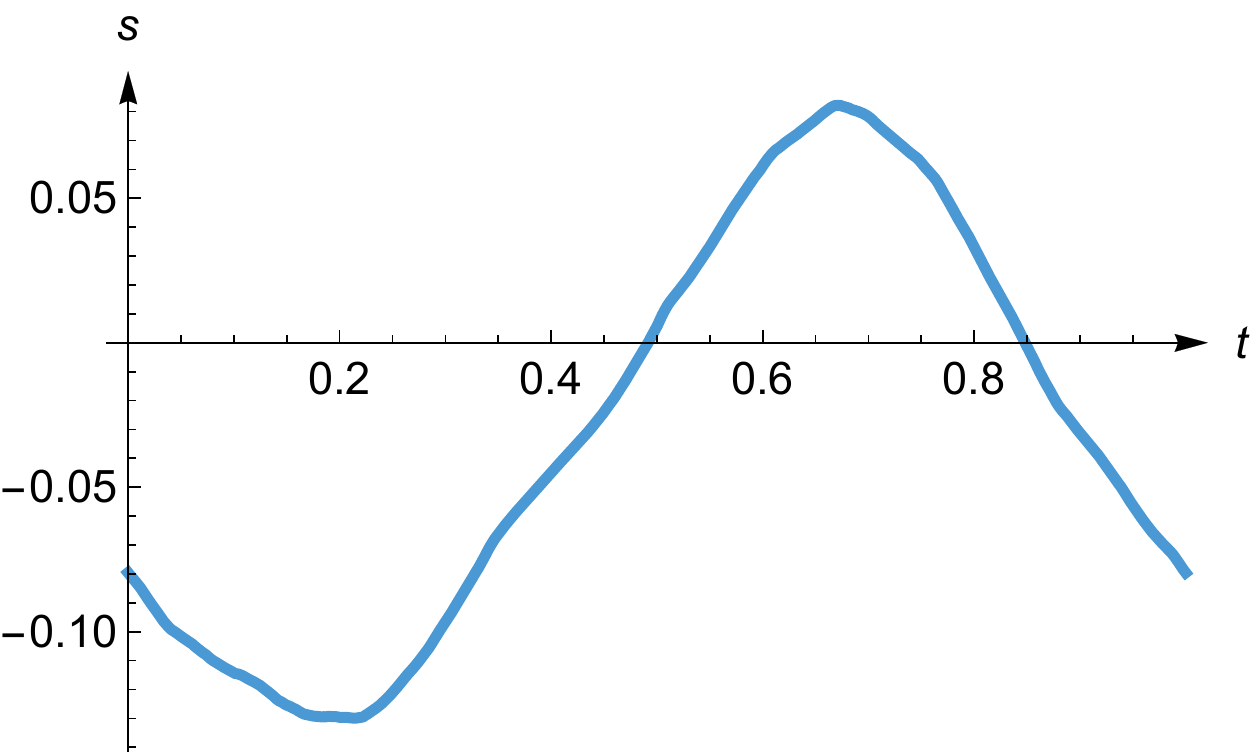}
 		}
 	\subfigure[$\gamma_0^2=10^3$.]{
 		\includegraphics[width=0.4\columnwidth]{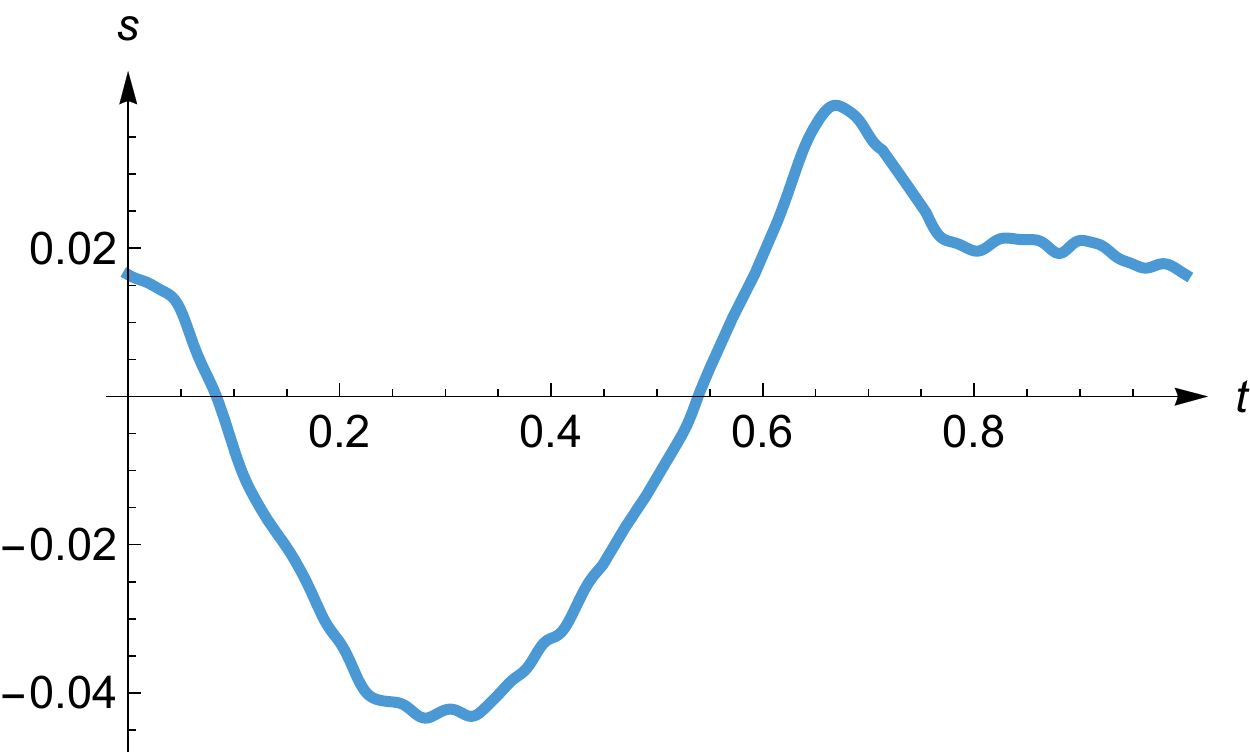}
 		}
 	\subfigure[$\gamma_0^2=10^6$.]{
 		\includegraphics[width=0.4\columnwidth]{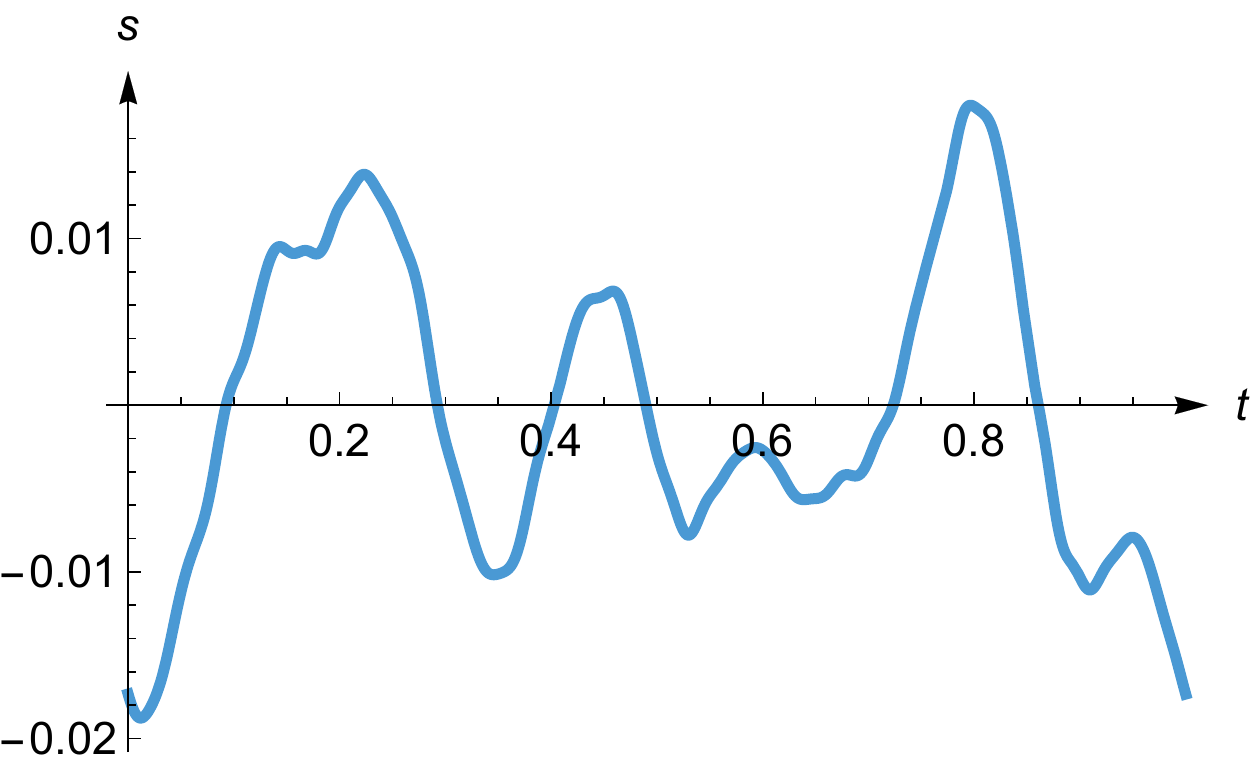}
 		}
 	\caption{Illustration of $s \sim \mathcal{GB}(\mathrm{D}^2+4\pi^2 \mathrm{I}, \gamma_0^2)$ for different values of $\gamma_0^2$.}
 	\label{fig: influence of gamma0}
 \end{figure}

 \subsection{Influence of $\lambda$}\label{sec: lambda}
 We evaluate the influence of the parameter $\lambda$ for the case of the invertible operator $\mathrm{L}= \mathrm{D} + \mathrm{I}$.
In this case we have that $\mathrm{Proj}_{\mathcal{N}_\mathrm{L}} = 0$ (since $\mathcal{N}_{\mathrm{L}} = \{0\}$), which simplifies~\eqref{eq:Lgamma}. Hence, the parameter $\gamma_0^2$ is immaterial and we denote by $\tilde{s}_\lambda$ the estimator associated to $\lambda > 0$. 
We consider $s \sim \mathcal{GB}(\mathrm{D} + \mathrm{I})$ and $\sigma_0^2 = 10^{-2}$.

\smallskip
\textit{Time-Domain Sampling Measurements.} 
We generated ${N=500}$ realizations of $s$. From each one, we extracted ${M=30}$ noisy measurements.
We then computed $30$ estimators $\{\big(\tilde{s}_{\lambda}\big)_n\}_{\lambda \in \mathcal{L}_1}$, where $\mathcal{L}_1$ is the set of values obtained by uniform sampling of the interval $[0.001, 0.03]$.
The plot of the NMSE (approximated according to~\eqref{eq: error lambda}) as a function of $\lambda$ is given in Figure~\ref{fig: plot lambda} (a). 
The minimum error is obtained for $\lambda\simeq 0.01$, which corresponds to $\sigma_0^2$. This result validates the theory presented in Theorem~\ref{th: MMSE}. 
Actually, when $\lambda$ is small, the estimator interpolates the noisy measurements while, for a large $\lambda$, the estimator tends to oversmooth the curve. The MMSE estimator makes an optimal tradeoff between fitting the data and smoothing the curve.
These observations about $\lambda$ retain their validity for other operators, including noninvertible ones.

\smallskip
\textit{Fourier-Domain Sampling Measurements.}
We consider complex exponential measurement functionals, inducing $\langle \bm{\nu}, s  \rangle = (\widehat{s}[k_1], \ldots , \widehat{s}[k_M])^{\mathsf{T}}$, where the $k_m$ are in $\mathbb{Z}$. We define $\mathcal{N}_{\bm{\nu}} = \{k_m\}_{m=1\ldots M}$, such that $(-k_m) \in \mathcal{N}_{\bm{\nu}}$ for every $k_m \in  \mathcal{N}_{\bm{\nu}}$. 
We consider the measurements $\bm{\nu} = ( e_{k_1}, \ldots , e_{k_M})$. Note that these measurement functionals are complex, which calls for a slight adaptation of the framework presented so far\footnote{One could equivalently consider cosine and sine measurements, to the cost of heavier formulas.}. The noise $\bm{\epsilon} = (\epsilon_1, \ldots, \epsilon_M)$ is then also complex and satisfies the properties:
\begin{itemize}
\item $\epsilon_m=\Re(\epsilon_m)+ \mathrm{j} \ \Im(\epsilon_m)$;
\item $\epsilon_{m_1}=\overline{\epsilon_{m_2}}$, $k_{m_1}=-k_{m_2}$;
\item $\Re(\epsilon_m), \ \Im(\epsilon_m) \sim \mathcal{N}(0,\frac{\sigma_0^2}{2})$, $\forall k_m \neq 0$;
\item $\epsilon_m \in \mathbb{R}$ and $\epsilon_m \sim \mathcal{N}(0,\sigma_0^2)$, $k_m=0$;
\item $\Re(\epsilon_m), \ \Im(\epsilon_m)$ and $\epsilon_{m_1}$, $k_{m_1}=0$, are independent.
\end{itemize}
This means that $\mathbb{E}[\lvert \epsilon_m \rvert^2]=\sigma_0^2$ for every $m$.

We repeated the experiment done with the time-domain sampling using exactly the same procedure and parameters, and $\mathcal{N}_{\bm{\nu}}=\{-2,-1,0,1,2\}$. The experimental curve of the evolution of the NMSE with $\lambda$ is given in Figure~\ref{fig: plot lambda} (b). Again, the minimum is obtained for $\lambda\simeq 0.01=\sigma_0^2$. We now want to compare this curve to the theoretical one.

For the Fourier-sampling case, we were also able to derive the corresponding closed-form formulas for the NMSE~\eqref{eq: error lambda}.

\begin{prop}\label{prop:fouriersampling}
Let $s$ be a Gaussian bridge associated with an invertible operator $\mathrm{L}$, and $y_m = \widehat{s}[k_m] + \epsilon_m$, $m=1\ldots M$, with $k_m \in \mathcal{N}_{\bm{\nu}}$ the sampled frequencies and $\epsilon$ a complex Gaussian noise with variance $\sigma_0^2$ as above.
Then, the MSE of the estimator $\tilde{s}_{\lambda} = \tilde{s}_\lambda(\cdot | \mathbf{y})$ is given by
\begin{equation}\label{eq:FouriersamplingNMSE}
	\mathbb{E}\left[ \lVert s - \tilde{s}_\lambda \rVert_{L_2}^2 \right] = \sum_{m=1}^M \frac{\widehat{h}[k_m] (\lambda^2 + \widehat{h}[k_m] \sigma_0^2)}{(\widehat{h}[k_m]+\lambda)^2} + \sum_{k\notin \mathcal{N}_{\bm{\nu}}} \widehat{h}[k],
\end{equation}
where $h$ is the reproducing kernel of $\mathcal{H}_{\mathrm{L}}$.
\end{prop}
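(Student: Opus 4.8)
The plan is to compute the mean-square error directly in the Fourier domain, exploiting the fact that for an invertible operator the reproducing kernel $h$ satisfies $\widehat{h}[k] = 1/|\widehat{L}[k]|^2 = \mathbb{E}[|\widehat{s}[k]|^2]$ (by Proposition~\ref{prop:covariancebridge}, equation~\eqref{eq:variancesk}), and that the whole setup diagonalizes over frequencies because the measurements are Fourier samples. First I would write $s = \sum_{k} \widehat{s}[k] e_k$ and observe that, by the properties of the Gaussian white noise recalled just before Table~\ref{Table: Gaussian Bridges}, the coefficients $\widehat{s}[k]$ for $k \geq 0$ are independent (with the Hermitian-symmetry constraint linking $k$ and $-k$), with $\mathbb{E}[|\widehat{s}[k]|^2] = \widehat{h}[k]$ and $\mathbb{E}[\widehat{s}[k]^2] = 0$ for $k \neq 0$. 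Likewise the noise $\epsilon_m$ attached to frequency $k_m$ satisfies $\mathbb{E}[|\epsilon_m|^2] = \sigma_0^2$, $\mathbb{E}[\epsilon_m^2] = 0$ for $k_m \neq 0$, independently of $s$.

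Next I would make the estimator explicit in the Fourier basis. For Fourier-sampling measurements $\nu_m = e_{k_m}$, one has $\varphi_m = h \ast e_{k_m} = \widehat{h}[k_m]\, e_{k_m}$, so the Gram matrix $\mathbf{G}$ from Proposition~\ref{prop: interpolation pb} is diagonal, $[\mathbf{G}]_{m,m} = \widehat{h}[k_m]$ (using orthonormality of the $e_k$ and assuming for simplicity distinct sampled frequencies), and hence $\mathbf{d} = (\mathbf{G}+\lambda\mathbf{I})^{-1}\mathbf{y}$ gives $d_m = y_m/(\widehat{h}[k_m]+\lambda)$. Therefore the estimator is
\begin{equation}
\tilde{s}_\lambda = \sum_{m=1}^M \frac{\widehat{h}[k_m]}{\widehat{h}[k_m]+\lambda}\, y_m\, e_{k_m} = \sum_{m=1}^M \frac{\widehat{h}[k_m]}{\widehat{h}[k_m]+\lambda}\bigl(\widehat{s}[k_m]+\epsilon_m\bigr)\, e_{k_m},
\end{equation}
i.e. it is a frequency-by-frequency Wiener-type shrinkage of the observed coefficients on the sampled set $\mathcal{N}_{\bm\nu}$, and it is zero on all unsampled frequencies. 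Writing $\alpha_m := \widehat{h}[k_m]/(\widehat{h}[k_m]+\lambda)$, the error $s - \tilde{s}_\lambda$ has Fourier coefficients $(\alpha_m - 1)\widehat{s}[k_m] - \alpha_m \epsilon_m$ on the sampled frequencies $k_m$, and simply $\widehat{s}[k]$ on the unsampled ones.

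Then I would apply Parseval, $\lVert s - \tilde{s}_\lambda\rVert_{L_2}^2 = \sum_k |\widehat{(s-\tilde{s}_\lambda)}[k]|^2$, take expectations, and use the independence/second-moment facts above: the cross term $\mathbb{E}[\widehat{s}[k_m]\overline{\epsilon_m}] = 0$, so each sampled frequency contributes $(1-\alpha_m)^2\widehat{h}[k_m] + \alpha_m^2 \sigma_0^2$, and each unsampled frequency contributes $\widehat{h}[k]$. Substituting $\alpha_m = \widehat{h}[k_m]/(\widehat{h}[k_m]+\lambda)$ gives $(1-\alpha_m)^2 = \lambda^2/(\widehat{h}[k_m]+\lambda)^2$ and $\alpha_m^2 = \widehat{h}[k_m]^2/(\widehat{h}[k_m]+\lambda)^2$, so the sampled contribution becomes $\bigl(\lambda^2 \widehat{h}[k_m] + \widehat{h}[k_m]^2\sigma_0^2\bigr)/(\widehat{h}[k_m]+\lambda)^2 = \widehat{h}[k_m](\lambda^2 + \widehat{h}[k_m]\sigma_0^2)/(\widehat{h}[k_m]+\lambda)^2$, which is exactly the summand in~\eqref{eq:FouriersamplingNMSE}; summing over $m$ and adding $\sum_{k\notin\mathcal{N}_{\bm\nu}}\widehat{h}[k]$ finishes the computation.

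The only real subtlety — and the step I would be most careful about — is the bookkeeping forced by working with complex measurement functionals and the Hermitian symmetry $\widehat{s}[-k] = \overline{\widehat{s}[k]}$, $\epsilon_{m_1} = \overline{\epsilon_{m_2}}$ when $k_{m_1} = -k_{m_2}$. One must check that pairing up conjugate frequencies does not introduce spurious factors of two: since $\mathcal{N}_{\bm\nu}$ is symmetric and the Parseval sum runs over all of $\mathbb{Z}$, each conjugate pair $\{k_m, -k_m\}$ is counted twice in $\sum_k$ but also appears as two indices $m$ in $\sum_{m=1}^M$, so the per-frequency formula is consistent; the $k=0$ term (if sampled) is real and contributes once with the same formula since there $\mathbb{E}[\epsilon_m^2]=\sigma_0^2$ and $\mathbb{E}[|\widehat{s}[0]|^2]=\widehat{h}[0]$ already. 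With that accounting settled, everything else is the routine Wiener-shrinkage algebra sketched above.
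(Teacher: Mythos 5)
Your proposal is correct and follows essentially the same route as the paper's own proof: make $\varphi_m$ and the (diagonal) Gram matrix explicit for Fourier sampling, write the error frequency-by-frequency as a Wiener-type shrinkage residual plus the untouched unsampled coefficients, and take expectations using $\mathbb{E}[\lvert\widehat{s}[k]\rvert^2]=\widehat{h}[k]$ and $\mathbb{E}[\lvert\epsilon_m\rvert^2]=\sigma_0^2$. Your extra remark on the Hermitian-symmetry bookkeeping for conjugate frequency pairs is a point the paper leaves implicit, and it is handled correctly.
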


\noindent The proof is given in Appendix~\ref{app:Fouriersampling}. Note that ${\widehat{h}[k]= 1 /\rvert \widehat{L}[k] \lvert^2}$ is real-valued and strictly positive for every $k$. From~\eqref{eq:FouriersamplingNMSE}, we also recover the property that the optimum is reached for $\lambda = \sigma_0^2$ since each of the $M$ terms that appear in the first sum is minimized for this value of $\lambda$.

The theoretical curve for $\mathcal{N}_{\bm{\nu}}=\{-2,-1,0,1,2\}$ is given in Figure~\ref{fig: plot lambda} (b) and is in good agreement with the experimental curve. We explain the slight variation ($0.15 \%$ for the $L_2$-norm over $\lambda\in [0.001,0.03]$) by the fact that~\eqref{eq: error lambda} is only an estimation of the theoretical NMSE.
 \begin{figure}
 \center\hspace*{-0.2cm}
  		\subfigure[{Time-domain sampling}.]{
  \includegraphics[width=0.48\columnwidth]{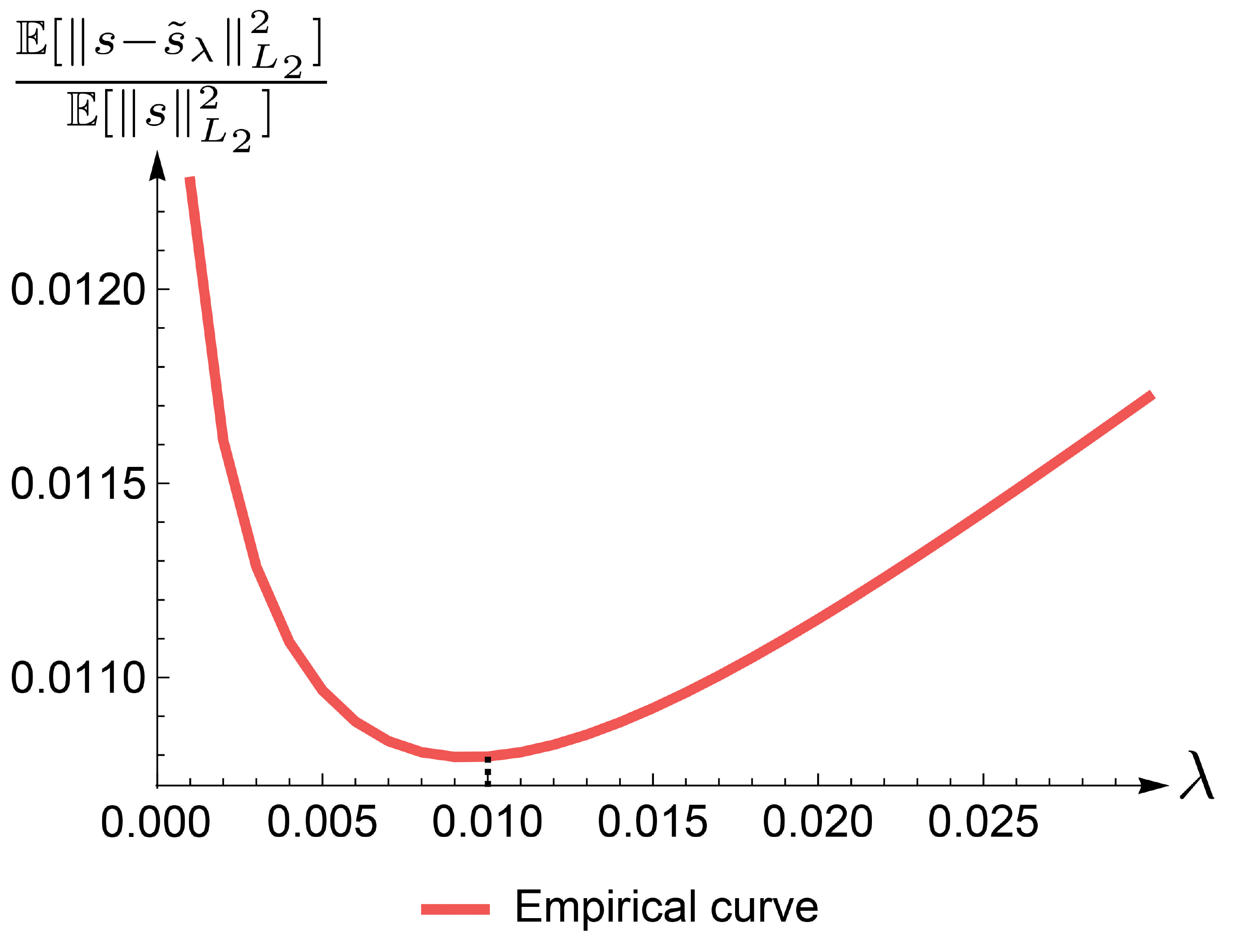}
  }\hspace*{+0.05cm}
  \subfigure[Fourier-domain sampling.]{
    \includegraphics[width=0.48\columnwidth]{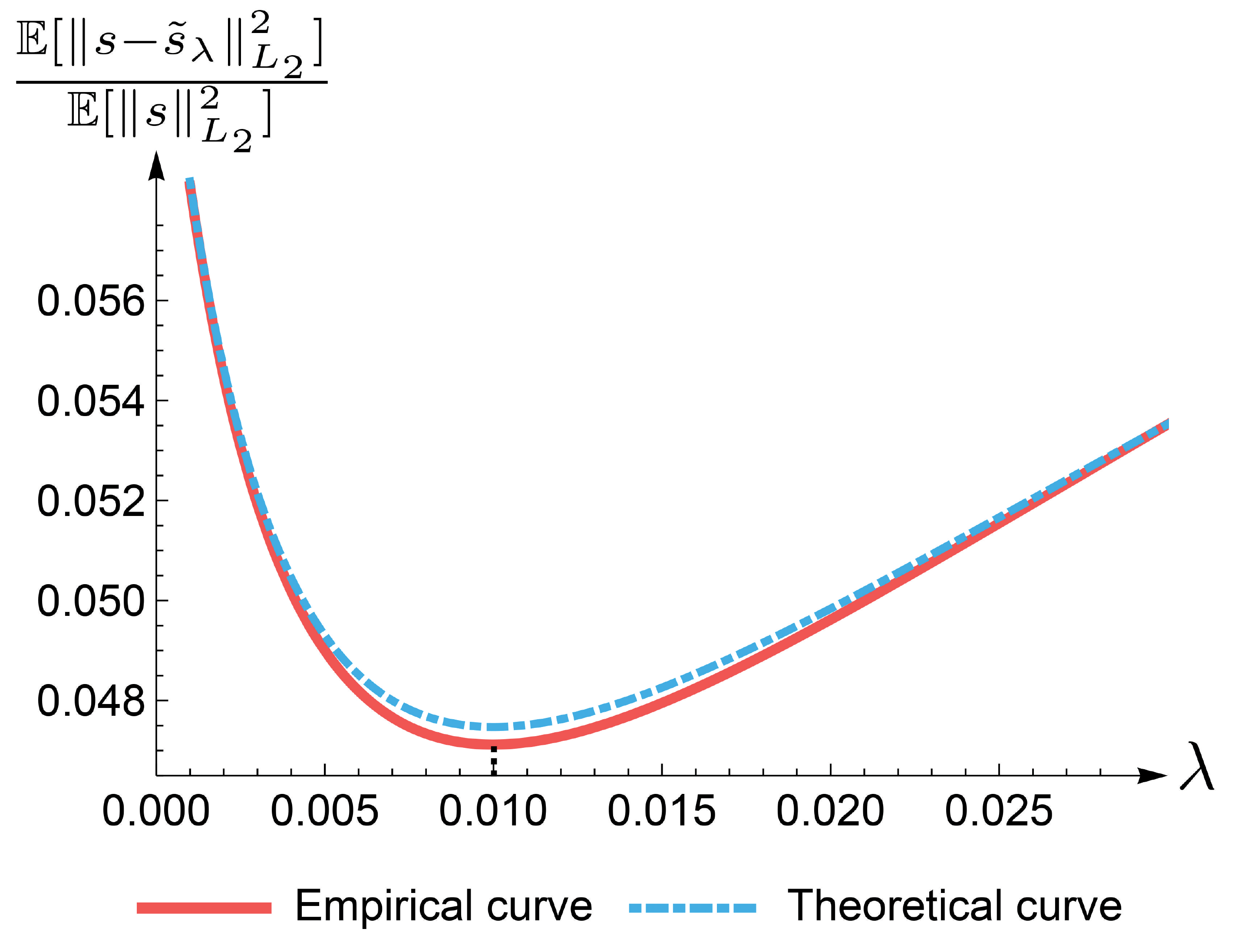}
    }
 \caption{
 Evolution of the NMSE in terms of $\lambda$ for $s \sim \mathcal{GB}(\mathrm{D+I})$ for time and Fourier-domain sampling measurements.
 }
 \label{fig: plot lambda}
 \end{figure}
 
 \subsection{Influence of $\gamma^2$}\label{sec: gamma}
In this section, we only consider noninvertible operators since invertibility has already been addressed in Section~\ref{sec:discussion} (see~\eqref{eq:equivalence}).
 In order to evaluate the specific influence of $\gamma$, we set $\lambda = \sigma_0^2$. Hence, $\tilde{s}_{\gamma, \sigma_0^2} = \tilde{s}_\gamma$. We generated $N=500$ realizations of a Gaussian bridge $s$, and from each one, we extracted $M=30$ noisy measurements. We repeated this for several operators $\mathrm{L}$ and values of $\gamma_0^2$ and $\sigma_0^2$. For each case, we compared $\tilde{s}_{\mathrm{MMSE}}$ to $\tilde{s}_{\gamma \rightarrow 0}$, $\tilde{s}_{\gamma \rightarrow \infty}$, and $f_{\mathrm{RT}}$ in~\eqref{eq:formsolutionRT}, seen here as an additional estimator. The corresponding NMSEs (see~\eqref{eq: error lambda}) are given in Table~\ref{Table: Comparison NMSE}. We make four observations.
 
 \smallskip
1) In each case, the best result is obtained with $\tilde{s}_{\mathrm{MMSE}}$, as expected.
We see, moreover, that
${\lim_{\gamma \rightarrow 0}\mathbb{E}[\| s -\tilde{s}_{\gamma} \|_{L_2}^2] \simeq \mathbb{E}[\| s -f_\mathrm{RT} \|_{L_2}^2]}$.
This is in line with the fact that the functional~\eqref{eq: periodic representer thm} to minimize in Theorem~\ref{th: periodic representer thm} corresponds to~\eqref{eq:L2reguWithGamma} with $\gamma = 0$.

 \smallskip
2) For small values of $\gamma_0^2$ (\textit{i.e.}, $10^{-3}$ or $10^0$), we see that $\mathbb{E}[\| s -f_\mathrm{RT} \|_{L_2}^2] \simeq \mathbb{E}[\| s -\tilde{s}_{\mathrm{MMSE}} \|_{L_2}^2]$. This means that the performances of $\tilde{s}_{\mathrm{MMSE}}$ and $f_{\mathrm{RT}}$ are very similar. This is illustrated in Figure~\ref{fig: plot Table} (a), where $\tilde{s}_{\mathrm{MMSE}}$ and $f_{\mathrm{RT}}$ do coincide. Meanwhile, we see that ${\lim_{\gamma \rightarrow \infty}\mathbb{E}[\| s -\tilde{s}_{\gamma} \|_{L_2}^2] \gg \mathbb{E}[\| s -\tilde{s}_{\mathrm{MMSE}} \|_{L_2}^2]}.$ This is also illustrated in Figure~\ref{fig: plot Table} (a) for $\mathrm{L=D}$. The reconstruction for $\gamma \rightarrow + \infty $ significantly fails to recover the original signal $s$, as the corresponding estimator tends to have zero-mean.

\smallskip
3) For intermediate values of $\gamma_0^2$ (\textit{i.e.}, $\gamma_0^2=10^3$ or $10^6$ according to $\sigma_0$ and the order of the operator), the minimal NMSE is obtained for $\tilde{s}_{\mathrm{MMSE}}$ only. We also observe that ${\mathbb{E}[\| s -f_\mathrm{RT} \|_{L_2}^2] < \lim_{\gamma \rightarrow \infty}\mathbb{E}[\| s -\tilde{s}_{\gamma} \|_{L_2}^2]}.$ This is illustrated in Figure~\ref{fig: plot Table} (b) for $\mathrm{L=D^2+4\pi^2 I}$, $\gamma_0^2=10^6$ and $\sigma_0^2=10^{-4}$, where we can distinguish $\tilde{s}_{\mathrm{MMSE}}$, $\tilde{s}_{\gamma \rightarrow \infty}$, and $f_{\mathrm{RT}}$.
 
\smallskip
4) For large values of $\gamma_0^2$ (\textit{i.e.}, $\gamma_0^2 =10^9$), we observe that ${\lim_{\gamma \rightarrow \infty}\mathbb{E}[\| s -\tilde{s}_{\gamma} \|_{L_2}^2] \simeq \mathbb{E}[\| s -\tilde{s}_{\mathrm{MMSE}} \|_{L_2}^2]}$ and ${\mathbb{E}[\| s -f_{\mathrm{RT}} \|_{L_2}^2] > \mathbb{E}[\| s -\tilde{s}_{\mathrm{MMSE}} \|_{L_2}^2]}.$ In fact, for large $\gamma_0^2$, the Gaussian bridge tends to have vanishing null-space frequencies (with~\eqref{eq: Gaussian bridge}, we have that $\widehat{s}[k_n] = \widehat{w}[k_n] / \gamma_0$ for $n=1 \ldots  N_0$).
Meanwhile, the reconstructed signal $f_{\mathrm{RT}}$ is not constrained to attenuate null-space frequencies. 
The null-space part in~\eqref{eq:formsolutionRT} is mainly responsible for a higher error compared to $\tilde{s}_{\mathrm{MMSE}}$. This is highlighted in Figure~\ref{fig: plot Table} (c).
 
 \smallskip
Observations 2), 3), and 4) suggest the existence of three regimes. 
For further investigation, we present in Figure~\ref{fig: plot gamma} the evolution of NMSE as a function of $\log{\gamma^2}$ for $\mathrm{L=D}$ and $\gamma_0^2=10^0, 10^3$, and $10^6$. 
%We clearly distinguish three regimes. 
The minimal error is always obtained for $\gamma^2 \simeq \gamma^2_0$, as predicted by the theory. 
%This validates the theory presented in Theorem~\ref{th: MMSE}. 
For the three cases, we observe two plateaus: one for $\gamma^2 \in (0,v_1)$ and the other for $\gamma^2 \in (v_2, \infty)$, where $v_1$, $v_2 > 0$. It means that, for each value of $\gamma_0^2$, the estimators $\tilde{s}_{\gamma}$ with $\gamma^2 \in (0,v_1)$ ($(v_2, \infty)$, respectively) are very similar and the reconstruction algorithms are practically indistinguishable. The values of $v_1$ and $v_2$ depend on $\gamma_0^2$. When $\gamma_0^2=10^0$ ($10^6$, respectively), we have that $\gamma_0^2 \in (0,v_1)$ ($(v_2, \infty)$, respectively). However, $\gamma_0^2=10^3 \in [v_1, v_2]$ belongs to none of the plateaus.

Two main conclusions can be drawn from our experiments. 
First, we have strong empirical evidence that 
\begin{equation}
\tilde{s}_\gamma \underset{\gamma\rightarrow 0}{\longrightarrow} f_{\mathrm{RT}},
\end{equation}
which we conjecture to be true for any Gaussian-bridge model.
This is remarkable because it presents the reconstruction based on the periodic representer theorem as a limit case of the statistical approach.
Second, we empirically see that, for reasonably small values of $\gamma_0^2$, the estimators corresponding to $\gamma^2 \leq \gamma_0^2$ are practically indistinguishable from the MMSE estimator. 
This is in particular valid for the representer-theorem reconstruction, for which we then have that
\begin{equation}
f_{\mathrm{RT}} \approx \tilde{s}_{\mathrm{MMSE}}.
\end{equation}

\noindent The variational method is \emph{theoretically} suboptimal to reconstruct Gaussian bridges. However, based on our experiments, it is reasonable to consider this method as \emph{practically} optimal for small values of $\gamma_0^2$ and $\lambda=\sigma_0^2$.
 \begin{table*}
\caption{Comparison of NMSE for $\tilde{s}_{\gamma \rightarrow 0}$, $f_{\mathrm{RT}}$, $\tilde{s}_{\mathrm{MMSE}}$, and $\tilde{s}_{\gamma \rightarrow \infty}$ over $N=500$ iterations. \textbf{Bold}: optimal result.}\label{Table: Comparison NMSE}
\vspace*{-0.3cm}
\begin{center}
\begin{tabularx}{\textwidth}{M{0.9cm}  | M{0.6cm} | Y | Y | Y | Y | Y | Y | Y | Y }
\hline\hline
\multicolumn{2}{Y |}{} & \multicolumn{4}{ c |}{$\sigma_0=10^{-1}$}  & \multicolumn{4}{c }{$\sigma_0=10^{-2}$}\Tstrut\Bstrut \\
\hline
$\mathrm{L}$ & $\gamma_0^2$ & $\tilde{s}_{\gamma \rightarrow 0}$ & $f_{\mathrm{RT}}$ & $\tilde{s}_{\mathrm{MMSE}}$ & $\tilde{s}_{\gamma \rightarrow \infty}$ & $\tilde{s}_{\gamma \rightarrow 0}$ & $f_{\mathrm{RT}}$ & $\tilde{s}_{\mathrm{MMSE}}$ & $\tilde{s}_{\gamma \rightarrow \infty}$\Tstrut\Bstrut\\
\hline
\multirow{5}{*}{$\mathrm{D}$}& $10^{-3}$ & $\mathbf{1.37 \times 10^{-5}}$ & $\mathbf{1.37 \times 10^{-5}}$ & $\mathbf{1.37 \times 10^{-5}}$ & $1.78$ & $\mathbf{8.40 \times 10^{-6}}$ & $\mathbf{8.40 \times 10^{-6}}$ & $\mathbf{8.40 \times 10^{-6}}$ & $2.94$\Tstrut\\
& $10^0$ & $\mathbf{1.17 \times 10^{-2}}$ & $\mathbf{1.17 \times 10^{-2}}$ & $\mathbf{1.17 \times 10^{-2}}$ & $1.66$ & $\mathbf{8.44 \times 10^{-3}}$ & $\mathbf{8.44 \times 10^{-3}}$ & $\mathbf{8.44 \times 10^{-3}}$ & $2.72$\\
& $10^3$ & $1.59 \times 10^{-1}$ & $1.56 \times 10^{-1}$ & $\mathbf{1.49 \times 10^{-1}}$ & $1.58 \times 10^{-1}$ & $1.05 \times 10^{-1}$ & $1.05 \times 10^{-1}$ & $\mathbf{9.96 \times 10^{-2}}$ & $1.21 \times 10^{-1}$\\
& $10^6$ & $1.61 \times 10^{-1}$ & $1.60 \times 10^{-1}$ & $\mathbf{1.43 \times 10^{-1}}$ & $\mathbf{1.43 \times 10^{-1}}$ & $1.07 \times 10^{-1}$ & $1.07 \times 10^{-1}$ & $\mathbf{9.11 \times 10^{-2}}$ & $\mathbf{9.11 \times 10^{-2}}$\\
& $10^9$ & $1.66 \times 10^{-1}$ & $1.66 \times 10^{-1}$ & $\mathbf{1.47 \times 10^{-1}}$ & $\mathbf{1.47 \times 10^{-1}}$ & $1.10 \times 10^{-1}$ & $1.10 \times 10^{-1}$ & $\mathbf{9.34 \times 10^{-2}}$ & $\mathbf{9.34 \times 10^{-2}}$\Bstrut\\
\hline
\multirow{5}{*}{$\mathrm{D^2}$} & $10^{-3}$ & $\mathbf{8.43 \times 10^{-7}}$ & $\mathbf{8.43 \times 10^{-7}}$ & $\mathbf{8.43 \times 10^{-7}}$ & $1.07$ & $3.12 \times 10^{-8}$ & $\mathbf{3.11 \times 10^{-8}}$ & $\mathbf{3.11 \times 10^{-8}}$ & $1.34$\Tstrut\\
& $10^0$ & $9.06 \times 10^{-4}$ & $9.06 \times 10^{-4}$ & $\mathbf{9.05 \times 10^{-4}}$ & $1.07$ & $\mathbf{3.34 \times 10^{-5}}$ & $\mathbf{3.34 \times 10^{-5}}$ & $\mathbf{3.34 \times 10^{-5}}$ & $1.33$\\
& $10^3$ & $4.04 \times 10^{-1}$ & $4.04 \times 10^{-1}$ & $\mathbf{3.61 \times 10^{-1}}$ & $7.1 \times 10^{-1}$ & $\mathbf{1.46 \times 10^{-2}}$ & $\mathbf{1.46 \times 10^{-2}}$ & $\mathbf{1.46 \times 10^{-2}}$ & $5.78 \times 10^{-1}$\\
& $10^6$ & $6.53 \times 10^{-1}$ & $6.53 \times 10^{-1}$ & $\mathbf{3.66 \times 10^{-1}}$ & $\mathbf{3.66 \times 10^{-1}}$ & $2.63 \times 10^{-2}$ & $2.63 \times 10^{-2}$ & $\mathbf{2.26 \times 10^{-2}}$ & $2.29 \times 10^{-2}$\\
& $10^9$ & $6.62 \times 10^{-1}$ & $6.62 \times 10^{-1}$ & $\mathbf{3.86 \times 10^{-1}}$ & $\mathbf{3.86 \times 10^{-1}}$ & $2.65 \times 10^{-2}$ & $2.65 \times 10^{-2}$ & $\mathbf{2.16 \times 10^{-2}}$ & $\mathbf{2.16 \times 10^{-2}}$\Bstrut\\
\hline
\multirow{5}{*}{$\mathrm{D^2+4 I}$} & $10^{-3}$ & $\mathbf{5.53 \times 10^{-7}}$ & $\mathbf{5.53 \times 10^{-7}}$ & $\mathbf{5.53 \times 10^{-7}}$ & $1.03$ & $\mathbf{1.71 \times 10^{-8}}$ & $\mathbf{1.71 \times 10^{-8}}$ & $\mathbf{1.71 \times 10^{-8}}$ & $1.22$\Tstrut\\
& $10^0$ & $5.56 \times 10^{-4}$ & $5.56 \times 10^{-4}$ & $\mathbf{5.55 \times 10^{-4}}$ & $1.04$ & $\mathbf{1.77 \times 10^{-5}}$ & $\mathbf{1.77 \times 10^{-5}}$ & $\mathbf{1.77 \times 10^{-5}}$ & $1.24$\\
& $10^3$ & $3.67 \times 10^{-1}$ & $3.67 \times 10^{-1}$ & $\mathbf{3.04 \times 10^{-1}}$  & $8.79 \times 10^{-1}$ & $1.21 \times 10^{-2}$  & $1.21 \times 10^{-2}$ & $\mathbf{1.20 \times 10^{-2}}$ & $8.71 \times 10^{-1}$\\
& $10^6$ & $1.52$ & $1.52$ & $\mathbf{4.63 \times 10^{-1}}$ & $\mathbf{4.63 \times 10^{-1}}$ & $3.94 \times 10^{-2}$ & $3.94 \times 10^{-2}$ & $\mathbf{2.98 \times 10^{-2}}$ & $3.04 \times 10^{-2}$\\
& $10^9$ & $1.47$ & $1.47$ & $\mathbf{4.87 \times 10^{-1}}$ & $\mathbf{4.87 \times 10^{-1}}$ & $4.67 \times 10^{-2}$ & $4.67 \times 10^{-2}$ & $\mathbf{3.18 \times 10^{-2}}$ & $\mathbf{3.18 \times 10^{-2}}$\Bstrut\\
\hline\hline
\end{tabularx}
\end{center}
\end{table*}
\begin{figure*}\vspace*{-0.15cm}
 	\center\hspace*{-0.1cm}
 	 			\subfigure[$\mathrm{L=D}$, $\gamma_0^2=1$, and $\sigma_0^2=10^{-2}$.]{
 		\includegraphics[width=0.62\columnwidth]{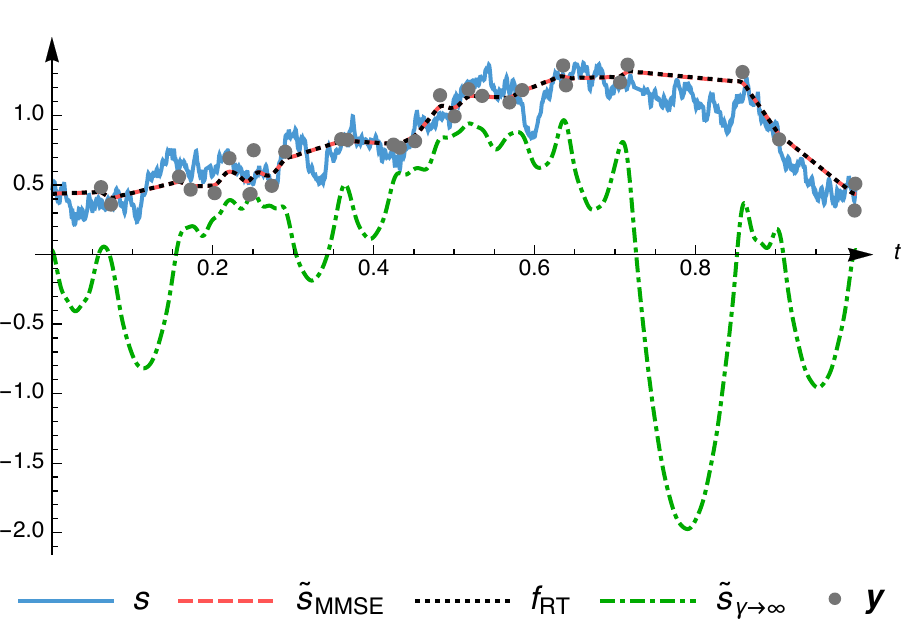}
 		}\hspace*{0.15cm}
 		\subfigure[$\mathrm{L=D^2+4 \pi^2 I}$, $\gamma_0^2=10^6$, and $\sigma_0^2=10^{-4}$.]{
 		\includegraphics[width=0.62\columnwidth]{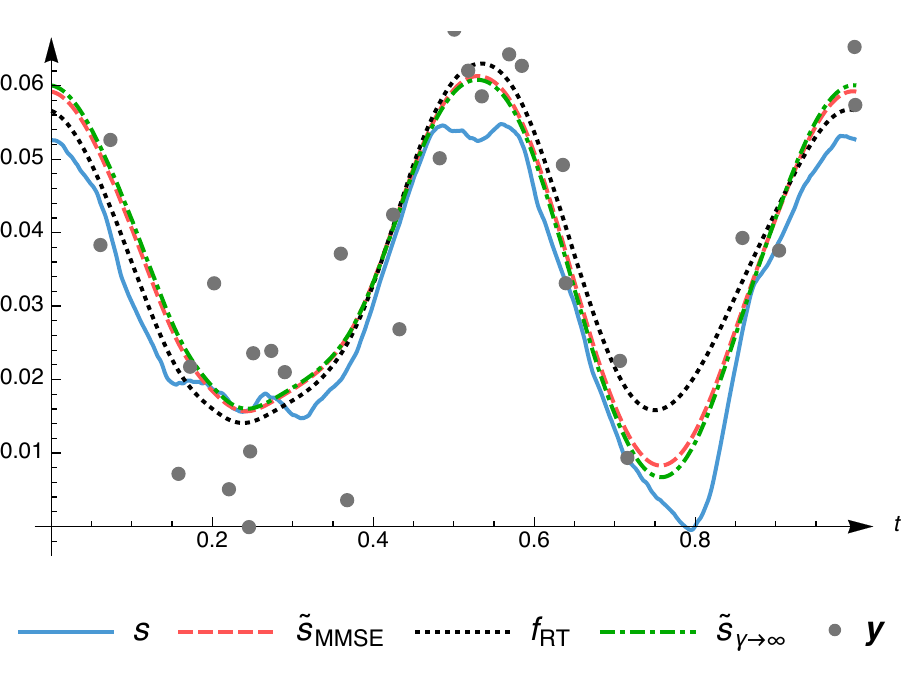}
 		}\hspace*{0.15cm}
 		 \subfigure[$\mathrm{L=D^2}$, $\gamma_0^2=10^9$, and $\sigma_0^2=10^{-4}$.]{
 		\includegraphics[width=0.62\columnwidth]{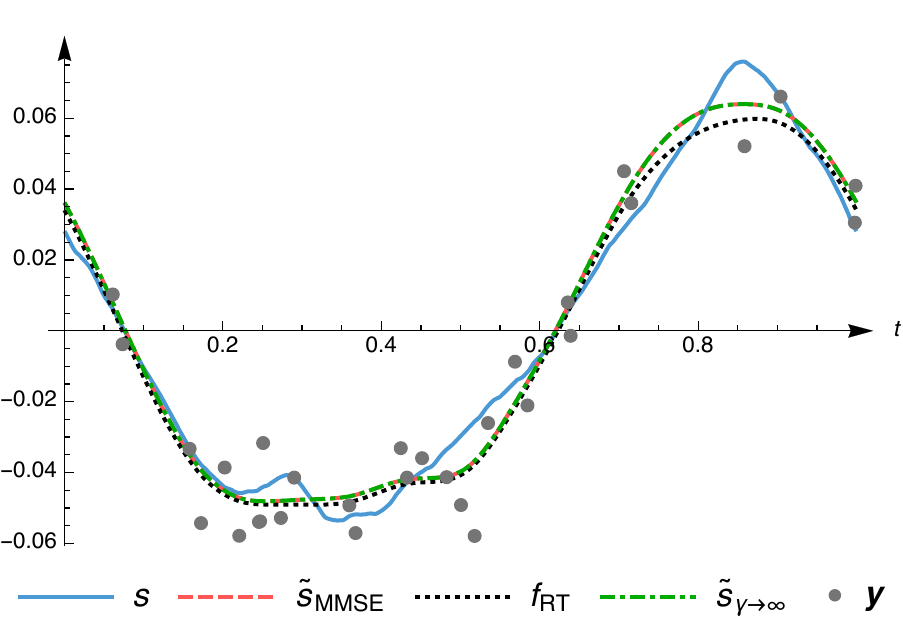}
 		}
 	\caption{Illustrations of $s \sim \mathcal{GB}(\mathrm{L}, \gamma_0^2)$, $\tilde{s}_{\mathrm{MMSE}}$,  $f_{\mathrm{RT}}$, and $\tilde{s}_{\gamma \rightarrow \infty}$ for several operators and values of $\gamma_0^2$ and $\sigma_0^2$ . We used $M=30$ noisy measurements $\mathbf{y}=(y_1, \ldots, y_M)$.}
 	\label{fig: plot Table}\vspace{-0.2cm}
 \end{figure*}
 \begin{figure*}
 \center\hspace*{+0.2cm}
  		\subfigure[$\gamma_0^2=1$.]{
 	\includegraphics[width=0.58\columnwidth]{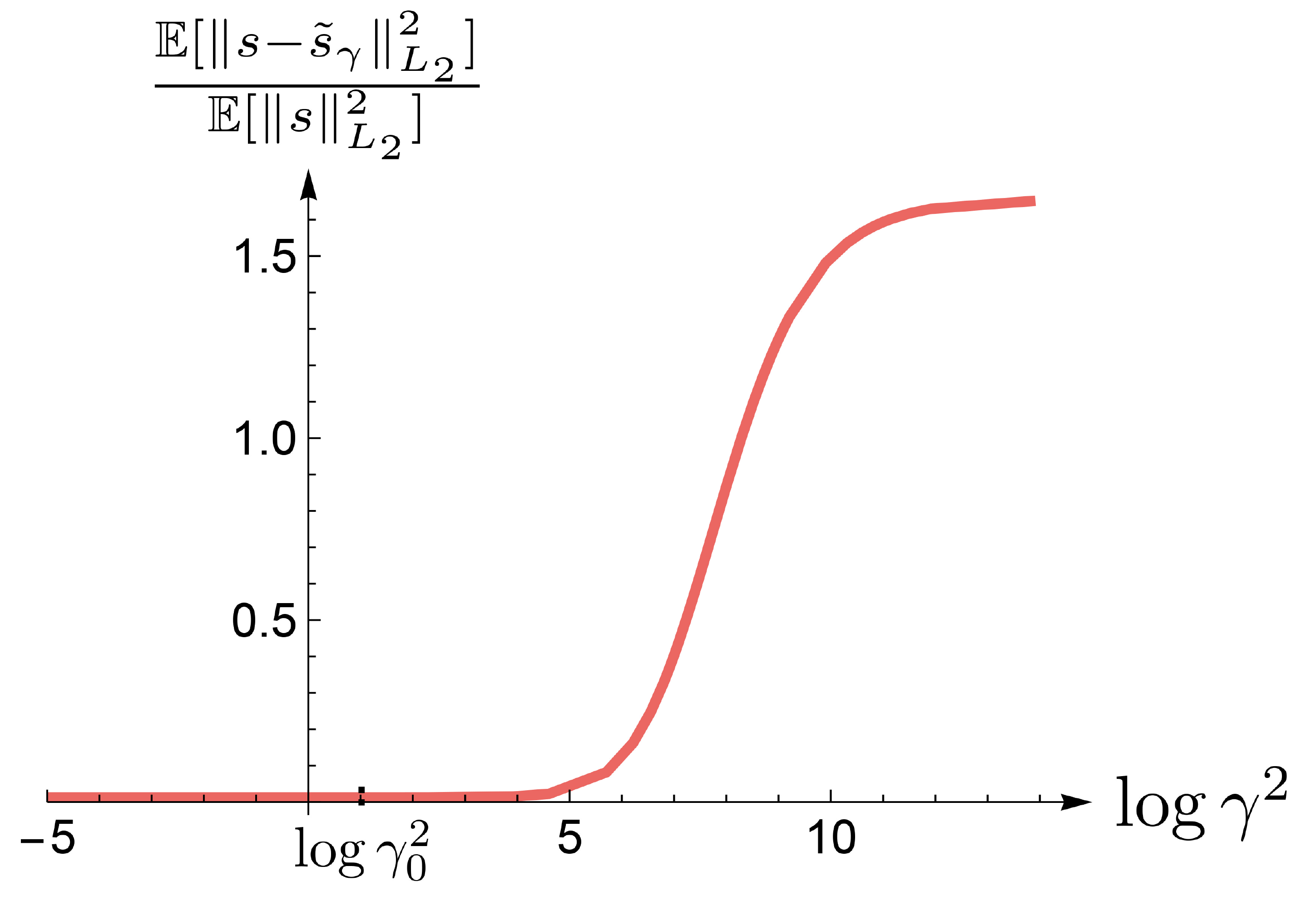}
 		}\hspace*{0.8cm}
		\subfigure[{$\gamma_0^2=10^3$.}]{
 	\includegraphics[width=0.58\columnwidth]{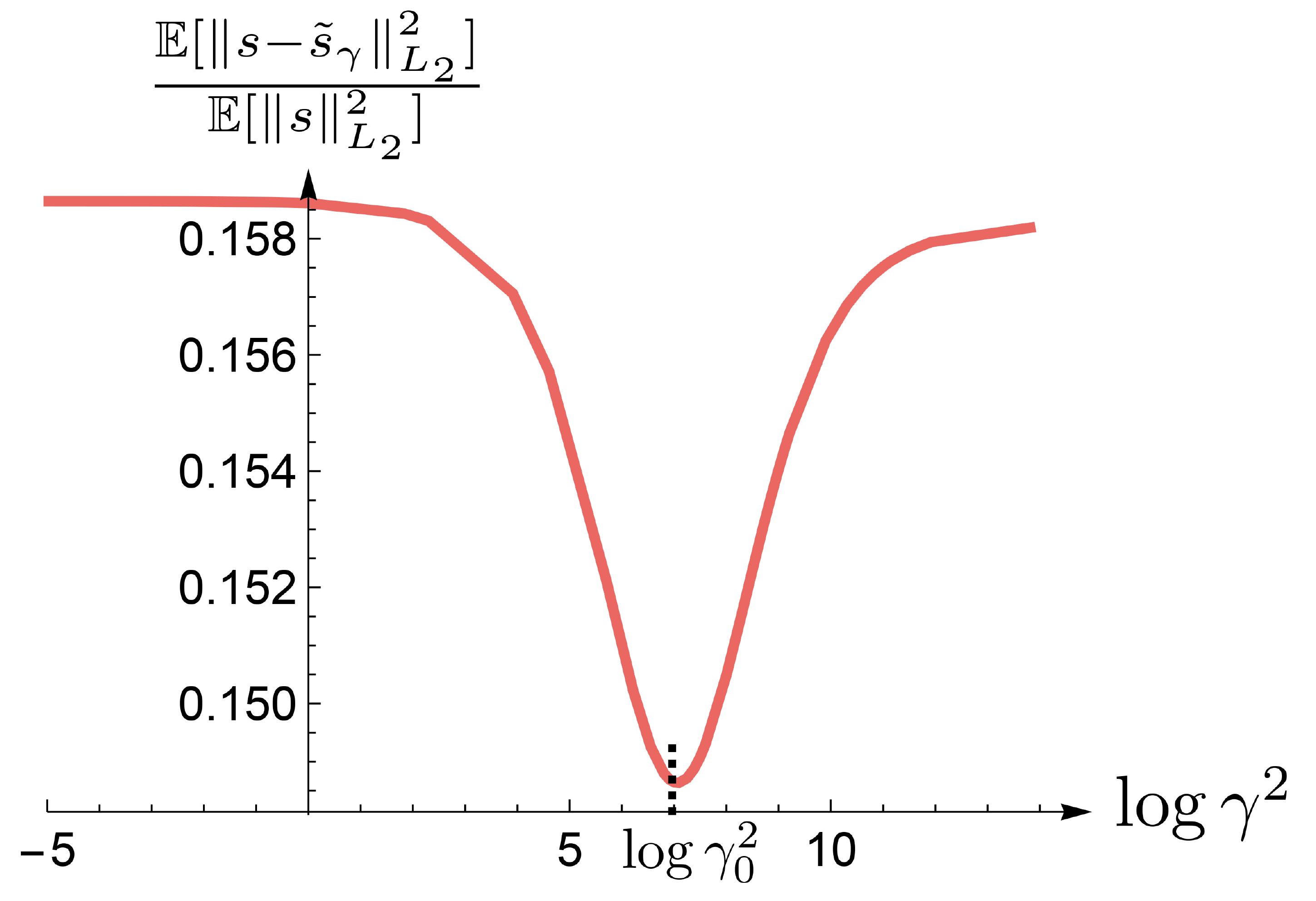}
 		}\hspace*{0.8cm}
 		 		\subfigure[$\gamma_0^2=10^6$.]{
 	\includegraphics[width=0.58\columnwidth]{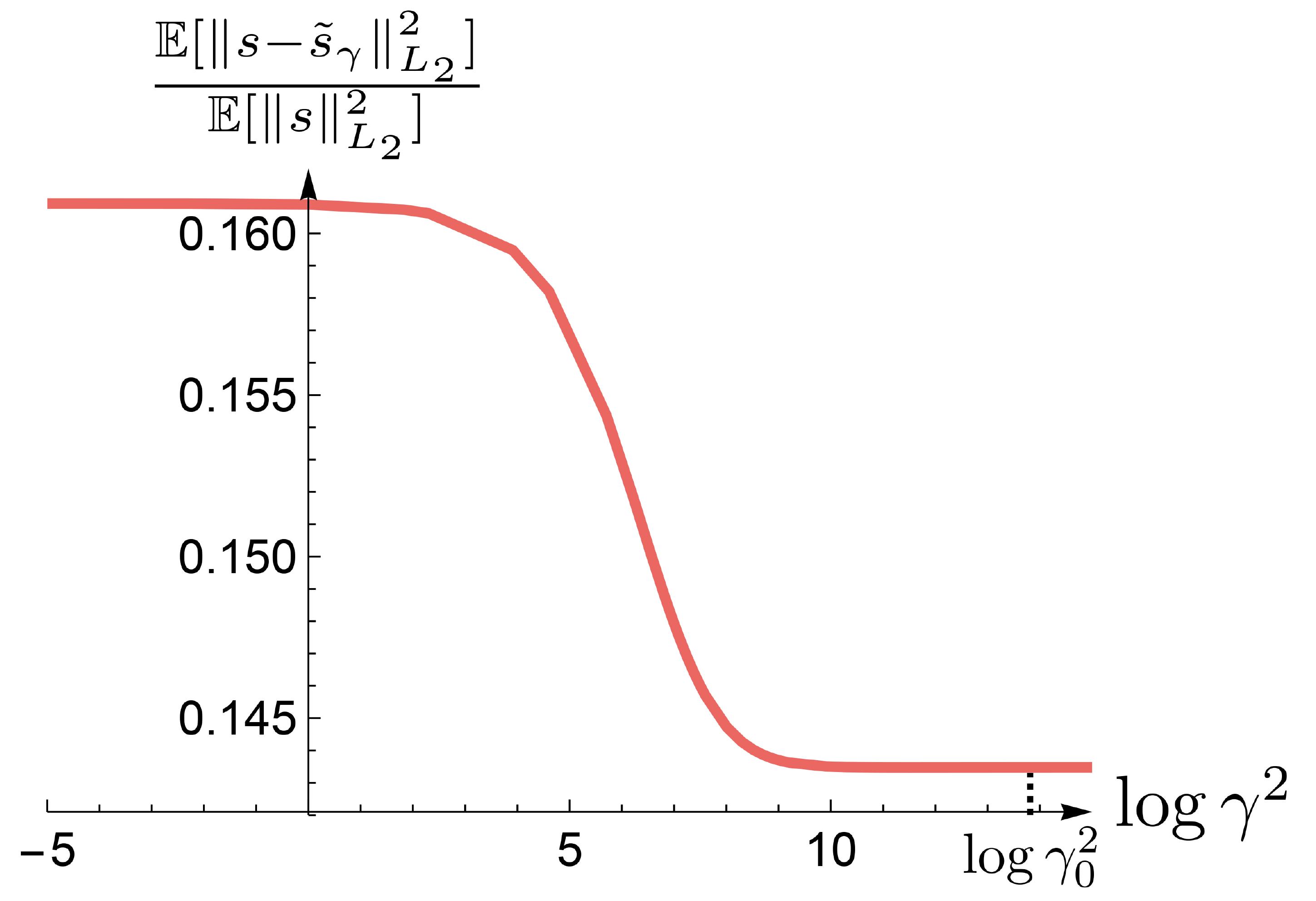}
 		}
 \caption{Evolution of NMSE according to $\gamma$ for $s \sim \mathcal{GB}(\mathrm{D}, \gamma_0^2)$.}
 \label{fig: plot gamma}\vspace{-0.5cm}
 \end{figure*}
 
 % %%%%%%%%%%%%%%%%%%%%%%%%%%%%%%%%%%%%%%%%%%%%%%%%%%%%%%%%%%%%%%%%%%%%%%%%%%%%%
\section{Discussion}\label{sec:periodicvsline}
% %%%%%%%%%%%%%%%%%%%%%%%%%%%%%%%%%%%%%%%%%%%%%%%%%%%%%%%%%%%%%%%%%%%%%%%%%%%%%%%

\subsection{Comparison with Inverse Problems on the Real Line}
It is worth noting that the periodic setting has important differences as compared to reconstruction methods over the complete real line, which motivated and played an important role in this paper. 
\smallskip
\begin{itemize}[leftmargin=*]
		\item The role of the Dirac impulse $\delta$ is played by the Dirac comb $\Sha$ in the periodic setting. It is indeed the neutral element of the periodic convolution~\eqref{eq: periodic convolution} and appears in the definition of the periodic $\mathrm{L}$-splines (Definition~\ref{def:Lspline}) and RKHS (Definition~\ref{def: RKHS}).
\smallskip
	\item In the real-line setting, in addition to smoothness properties, functions are also characterized by their property of decay at infinity~\cite{Simon2003distributions}. For periodic functions, we only consider the smoothness properties, which brings substantial simplifications. 
\smallskip
	\item In general, a continuous LSI operator does not preserve the asymptotic behavior of the input function. For instance, a test function in the space $\mathcal{S}(\mathbb{R})$ of smooth and rapidly decaying functions is not necessarily mapped to a rapidly decaying function. In contrast, any continuous LSI operator maps the space of periodic test functions $\mathcal{S}(\mathbb{T})$ onto itself (see Section~\ref{subsec:LSIop}). This greatly simplifies the study of operators that act on periodic functions.
\smallskip
	\item The null space of a continuous LSI operator can differ for the two cases. In particular, when acting on periodic functions, the null space of the $n$th derivative $\mathrm{D}^n$ is reduced to constant functions for every $n \geq 1$. This is crucial due to the role of the null space in Theorems~\ref{th: periodic representer thm} and~\ref{th: MMSE}.
\smallskip
\item In Proposition~\ref{prop: RKHS}, we give a necessary and sufficient condition for a continuous LSI operator of finite-dimensional null space to specify a RKHS in the sense of Definition~\ref{def: RKHS}. This is significantly more complicated over the real line, for which only partial results are known~\cite{Unser2016splines}.
\smallskip
\item We have seen that it is not always possible to find a periodic solution $s$ to the equation $\mathrm{L} s = w$, where $w$ is a periodic Gaussian white noise. This lead us to modify the stochastic differential equation (see~\eqref{eq:GB}) and to introduce the family of Gaussian bridges. 	
\smallskip
\item In Theorem~\ref{th: MMSE}, we give the MMSE estimator of the \emph{complete} process $s$, not only for the estimation of $s(t_0)$ at a fixed time $t_0$. In the non-periodic setting, however, solutions of stochastic differential equations are generally not square-integrable. For instance, if $s$ is a nontrivial stationary Gaussian process, then
\begin{align}
	\mathbb{E} [\lVert s \rVert_{L_2(\mathbb{R})}^2] &= \sum_{k\in \mathbb{Z}} \mathbb{E}[\lVert 1_{[k,k+1)}\cdot s \rVert_{L_2(\mathbb{R})}^2]\nonumber \\ 
	& \overset{(i)}{=} \sum_{k\in \mathbb{Z}} \mathbb{E}[\lVert 1_{[0,1)}\cdot s \rVert_{L_2(\mathbb{R})}^2] = \infty,
\end{align}
where $1_{[a,b)}$ is the indicator function on $[a,b)$ and $(i)$ exploits stationarity. Another example is the Brownian motion, whose supremum over $[0,t]$ grows faster than $t^p$ for any $p<1/2$ (almost surely) when $t$ goes to infinity~\cite{Karatzas2012brownian}, hence being of infinite energy.
As a consequence, it is irrelevant to consider the MMSE estimator of the complete process and one ought to, for instance, restrict to MMSE estimators of local values $s(t_0)$ of the process.
\end{itemize}

\subsection{Comparison with TV Regularization}\label{sec:TV}

A recent tendency in the field of signal reconstruction is to rely on sparsity-promoting regularization, motivated by the fact that many real-world signals are sparse in some adequate transform domain~\cite{Elad10,mallat2008wavelet,Unser2014sparse}.

The vast majority of works focuses on the finite-dimensional setting via $\ell_1$-type regularization. However, some authors have recently promoted the reconstruction of infinite-dimensional sparse signals~\cite{Adcock2015generalized,candes2014towards}. The adaptation of discrete $\ell_1$ methods to the continuous domain is based on the total-variation (TV) regularization norm, for which it is possible to derive representer theorems (see~\cite[Theorem 1]{Unser2016splines}). A comparison between Tikhonov and TV variational techniques is proposed in Gupta \textit{et al.}~\cite{gupta2018continuous} for non-periodic signals. In brief, at identical measurements and regularization operator $\mathrm{L}$, Tikhonov regularization favors smooth solutions restricted to a finite-dimensional space, while TV regularization allows for adaptive and more compressible solutions. In~\cite[Table I]{gupta2018continuous}, it was shown on simulations that Tikhonov methods perform better on fractal-type signals, while TV methods are better suited to sparse signals. We expect similar behaviors for the periodic setting.

At the heart of the present paper is the connection between $L_2$-regularization and the statistical formalism of MMSE estimation of Gaussian processes. A theoretical link between deterministic and stochastic frameworks is much harder to provide for sparsity-inducing priors. There is strong empirical evidence that sparse stochastic models are intimately linked to TV-based methods~\cite{Unser2014sparse}, but the extent to which such estimators approach the MMSE solution is still unknown.

% %%%%%%%%%%%%%%%%%%%%%%%%%%%%%%%%%%%%%%%%%%%%%%%%%%%%%%%%%%%%%%%%%%%%%%%%%%%%%
\section{Conclusion}\label{sec:conslusions}
% %%%%%%%%%%%%%%%%%%%%%%%%%%%%%%%%%%%%%%%%%%%%%%%%%%%%%%%%%%%%%%%%%%%%%%%%%%%%%%%
We have presented two approaches for the reconstruction of periodic continuous-domain signals from their corrupted discrete measurements.
The first approach is based on optimization theory and culminates with the specification of a periodic representer theorem (Theorem~\ref{th: periodic representer thm}).
In the second approach, a signal is modeled as a stationary periodic random process and the reconstruction problem is transformed into an estimation problem. Theorem~\ref{th: MMSE} then gives the optimal estimator (in the mean-square sense) for Gaussian bridges.

We have also provided theoretical and experimental comparisons of the two approaches and identified two main findings.
First, for invertible operators, the statistical and variational approaches are equivalent and correspond to an identical reconstruction scheme.
For noninvertible operators, however, this equivalence is not valid anymore, but the variational method corresponds to the statistical reconstruction when the parameter $\gamma$ vanishes. More importantly, for small values of $\gamma_0^2$, the variational method is practically equivalent to the optimal statistical reconstruction.
This demonstrates the efficiency of the representer theorem for reconstructing Gaussian bridges, even for noninvertible operators.

\appendix
\subsection{Proof of Proposition~\ref{prop: null space}}\label{app: null space}
The main argument is very classical in the non-periodic setting. We detail it for the sake of completeness and adapt it to the periodic case.

Let $p$ be a function of $\mathcal{N}_{\mathrm{L}}$. As $\mathrm{L}$ is shift-invariant, ${p(\cdot - t_0) \in \mathcal{N}_{\mathrm{L}}}$ for every $t_0 \in \mathbb{T}$. Moreover, $\mathcal{N}_{\mathrm{L}}$ is closed in $\mathcal{S}'(\mathbb{T})$ (as any finite-dimensional linear subspace), thus the first derivative $p'=p^{(1)}$ of $p$ is in $\mathcal{N}_\mathrm{L}$ as the limit of the function $\frac{1}{t_0}( p(\cdot - t_0) - p) \in \mathcal{N}_\mathrm{L}$ when $t_0\rightarrow 0$. We propagate this property to all the derivatives of $p$. 

We now have that $\mathcal{N}_{\mathrm{L}}$ is a finite-dimensional space of dimension $N_0$ and $p^{(k)}\in \mathcal{N}_{\mathrm{L}}$, $\forall k \in [1 \ldots N_0]$. Hence, the family of $(N_0+1)$ functions $p, p^{(1)}, \ldots, p^{(N_0)}$ satisfies an equation of the form $a_{N_0} p^{(N_0)} + \cdots + a_0 p =0$, where $a_k \in \mathbb{C}$ and $(a_0, \ldots , a_{N_0} ) \neq \mathbf{0}$. This implies that $p$, as solution of a differential equation with constant coefficients, is a sum of functions of the form $q(t) \mathrm{e}^{\mu t}$ with $q$ a polynomial and $\mu \in \mathbb{C}$. 

Finally, since we deal with $1$-periodic functions, this constrains $q$ to be a constant function and $\mu = 2 \pi \mathrm{j} k$ with $k \in \mathbb{Z}$. This concludes the proof.

\subsection{Proof of Proposition~\ref{prop: inner product}}\label{app: inner-product}
The linearity, Hermitian symmetry, and non-negativity are easily obtained. We only need to verify that ${\|f\|_{\mathcal{H}_{\mathrm{L}}}=\langle f,f \rangle_{\mathcal{H}_{\mathrm{L}}}^{\frac{1}{2}}=0 \Leftrightarrow f=0}$. For this, we observe that
\begin{align}
\langle f,f \rangle_{\mathcal{H}_{\mathrm{L}}}=0 &\Leftrightarrow \int_0^1|\mathrm{L}f(t)|^2\mathrm{d}t+\gamma^2\sum_{n=1}^{N_0}|\widehat{f}[k_n]|^2=0 \nonumber \\
&\Leftrightarrow \sum_{k \in \mathcal{K}_{\mathrm{L}}} \big | \widehat{f}[k] \big |^2 
\underbrace{\big |\widehat{L}[k] \big |^2 }_{\neq 0}
+\gamma^2\sum_{n=1}^{N_0}|\widehat{f}[k_n]|^2=0,
\end{align}
which implies that $\widehat{f}[k]=0$ for all $k \in \mathbb{Z}$. Hence, ${\langle f,f \rangle_{\mathcal{H}_{\mathrm{L}}}=0 \Leftrightarrow f=0}$.

\subsection{Proof of Proposition~\ref{prop: RKHS}}\label{app: RKHS}
For the proof, we set $A = \sum\limits_{k \in \mathcal{K}_{\mathrm{L}}}\frac{1}{|\widehat{L}[k]|^2}$. The Hilbert space $\mathcal{H}_{\mathrm{L}}$ is a RKHS if and only if $\Sha \in \mathcal{H}_{\mathrm{L}}'$ or, equivalently, if there exists $C>0$ such that
\begin{equation}\label{eq: RKHS}
\forall f \in \mathcal{S}(\mathbb{T}), \ \ | \langle \Sha, f \rangle| \leq C \| f \|_{\mathcal{H}_{\mathrm{L}}}.
\end{equation}

 Assume that $A < + \infty$. Let $c$ be the sequence such that $c[k] = 1/\widehat{L}[k]$ if $k\in \mathcal{K}_{\mathrm{L}}$ and $c[k] = 1 / \gamma$ otherwise. 
Using the Cauchy-Schwarz inequality, we have, for every $f \in \mathcal{S}(\mathbb{T})$, that
 \begin{align}
 \langle \Sha, f \rangle^2 =  \left( \sum \widehat{f}[k] \right)^2 &\leq  {\bigg(\sum  |c[k]|^2\bigg) \bigg(\sum  \bigg |\frac{\widehat{f}[k]}{c[k]}\bigg \vert^2}\bigg) \nonumber \\
&=( {N_0}/{\gamma^2 }+A) \| f \|^2_{\mathcal{H}_{\mathrm{L}}}.
\end{align}
Hence,~\eqref{eq: RKHS} is satisfied for $C=({N_0}/{\gamma^2 }+A)^{1/2}> 0$.
For the converse, we define $f_m \in \mathcal{S}(\mathbb{T})$ such that $$\widehat{f_m}[k]=\begin{cases}
0, & \mbox{if } |k|>m \mbox{ or } k=k_n, n \in [1 \ldots N_0]\\
\frac{1}{ |\widehat{L}[k]|^2}, & \mbox{otherwise.}
\end{cases}$$
Then, we readily observe that $\lim\limits_{m\rightarrow + \infty}\frac{ \ | \langle \Sha, f_m \rangle|}{\| f_m\|_{\mathcal{H}_{\mathrm{L}}}}= \sqrt{A}.$
Therefore, as soon as $A=+\infty$, $\langle \Sha,f \rangle / \lVert f \rVert_{\mathcal{H}_{\mathrm{L}}}$ is not bounded in $\mathcal{S}(\mathbb{T})$ and $\mathcal{H}_{\mathrm{L}}$ is not a RKHS. 

The reproducing kernel is characterized by the relation
$f(\tau) = \langle h(\cdot ,\tau ), f \rangle_{\mathcal{H}_{\mathrm{L}}}$ for every $f \in \mathcal{H}_{\mathrm{L}}$. Let $\mathrm{R}$ be the operator, often called the Riesz map, such that $\langle \mathrm{R} g , f \rangle_{\mathcal{H}_{\mathrm{L}}} = \langle g, f \rangle$ for any $f \in \mathcal{H}_{\mathrm{L}}$ and $g \in \mathcal{H}'_{\mathrm{L}}$. Then, $h(\cdot ,\tau) = \mathrm{R} \{ \Sha( \cdot - \tau )\}$. 
Moreover, we have that $\langle \mathrm{R} e_k ,e_m \rangle_{\mathcal{H}_{\mathrm{L}}} = \delta[k-m]$. In addition, 
\begin{small}
\begin{align}
\langle \mathrm{R}e_k, e_m\rangle _{\mathcal{H}_{\mathrm{L}}}&=\langle \mathrm{L}\mathrm{R}e_k, \mathrm{L} e_m\rangle  + \gamma^2 \sum_{n=1}^{N_0}\widehat{\mathrm{R}e_k}[k_n]\overline{\widehat{e_m}[k_n]} \nonumber \\
&=\langle\mathrm{R}e_k,  \mathrm{L}^{\ast}\mathrm{L} e_m\rangle  + \gamma^2 \sum_{n=1}^{N_0}\widehat{\mathrm{R}e_k}[k_n]\delta[m-k_n] \nonumber \\
&=|\widehat{L}[m]|^2\widehat{\mathrm{R}e_k}[m]+ \gamma^2 \sum_{n=1}^{N_0}\widehat{\mathrm{R}e_k}[k_n]\delta[m-k_n].
\end{align}
\end{small}
Hence, $\mathrm{R}$ is characterized for $k,m\in \mathbb{Z}$ by the relation
\begin{equation}\label{eq: 1}
|\widehat{L}[m]|^2\widehat{\mathrm{R}e_k}[m]+\gamma^2 \sum_{n=1}^{N_0}\widehat{\mathrm{R}e_k}[k_n]\delta[m-k_n]=\delta[k-m].
\end{equation}
For $k \in \mathcal{K}_{\mathrm{L}}$, we deduce from~\eqref{eq: 1} that $\widehat{\mathrm{R}e_k} [m] = 1 /\lvert \widehat{L}[k]\rvert^2$ if $m=k$ and $0$ otherwise. We also deduce that, for $k=k_n$, $\widehat{\mathrm{R}e_{k_n}} [m]= 1/\gamma^2$ if $m=k_n$ and $0$ otherwise.
Thus, $\mathrm{R}$ is shift-invariant ($\widehat{\mathrm{R}e_k}[m] = 0$ for every $m\neq k$), meaning that $h(t,\tau)$ depends only on $(t-\tau)$. 
Moreover, the Fourier multiplier of $\mathrm{R}$, which is also the discrete Fourier transform of $h_\gamma (t) = h(t,0)$, is $\widehat{R}[k]= 1 /\lvert \widehat{L}[k]\rvert^2$ if $k\in \mathcal{K}_{\mathrm{L}}$ and $1/\gamma^2$ if $k = k_n$. This is equivalent to~\eqref{eq: reproducing kernel} and concludes the proof.

\subsection{Proof of Theorem~\ref{th: periodic representer thm}}\label{app: per rep th}
To prove Theorem~\ref{th: periodic representer thm}, we first show that the optimization problem~\eqref{eq: periodic representer thm} has a unique solution by convex-optimization arguments. Then, we connect this solution to the abstract representer theorem (see for instance~\cite[Theorem 16.1]{Wendland2004scattered}) to deduce the form of the solution. We start with some preliminary results for the first part. 

\begin{lemma}\label{lemma:convcoerc}
Under the condition of Theorem~\ref{th: periodic representer thm}, the functional $\phi: \ {\mathcal{H}_{\mathrm{L}}} \rightarrow \mathbb{R}^+ $ defined by
$\phi(f)=F(\mathbf{y}, \langle \boldsymbol\nu , f \rangle)+\lambda \| \mathrm{L}f\| _{L_2}^2$
 is strictly convex and coercive, meaning that $\phi(f) \rightarrow \infty$ when $\lVert f \rVert_{\mathcal{H}_{\mathrm{L}}} \rightarrow \infty$. 
\end{lemma}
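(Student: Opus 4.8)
The plan is to establish the two properties separately, each by a short case analysis.

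For strict convexity, I would fix $f\neq g$ in $\mathcal{H}_{\mathrm{L}}$ and $t\in(0,1)$ and show that the defining inequality is strict. Convexity of $\phi$ is immediate ($f\mapsto\langle\boldsymbol\nu,f\rangle$ is linear and $\mathbf{u}\mapsto\|\mathbf{u}\|_{L_2}^2$ is convex), so only strictness needs work, and it splits into two cases. If $\langle\boldsymbol\nu,f\rangle\neq\langle\boldsymbol\nu,g\rangle$, I use that $\mathbf{z}\mapsto F(\mathbf{y},\mathbf{z})$ is strictly convex (being the restriction of the strictly convex $F$ to the affine slice $\{\mathbf{y}\}\times\mathbb{R}^M$), which makes the data term strictly convex along the segment while the regularizer stays $\le$ the convex combination. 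If $\langle\boldsymbol\nu,f\rangle=\langle\boldsymbol\nu,g\rangle$, then $h:=f-g$ is a nonzero element of $\mathcal{N}_{\boldsymbol\nu}$; the hypothesis $\mathcal{N}_{\mathrm{L}}\cap\mathcal{N}_{\boldsymbol\nu}=\{0\}$ forces $h\notin\mathcal{N}_{\mathrm{L}}$, hence $\mathrm{L}f\neq\mathrm{L}g$ in $L_2(\mathbb{T})$, and strict convexity of the squared Hilbert-space norm makes $\lambda\|\mathrm{L}\cdot\|_{L_2}^2$ strictly convex along the segment, while the data term is now constant there. Either way the strict inequality follows.

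For coercivity, I would argue by contradiction: assume $\|f_j\|_{\mathcal{H}_{\mathrm{L}}}\to\infty$ but $\phi(f_j)\le C$ along a subsequence. From $\lambda\|\mathrm{L}f_j\|_{L_2}^2\le\phi(f_j)$ together with the identity $\|f\|_{\mathcal{H}_{\mathrm{L}}}^2=\|\mathrm{L}f\|_{L_2}^2+\gamma^2\|\mathrm{Proj}_{\mathcal{N}_{\mathrm{L}}}\{f\}\|_{L_2}^2$ (recorded after Proposition~\ref{prop: inner product}), the null-space part must blow up: $\|p_j\|_{L_2}\to\infty$ where $p_j=\mathrm{Proj}_{\mathcal{N}_{\mathrm{L}}}\{f_j\}$. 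Writing $f_j=p_j+q_j$, one checks $\|q_j\|_{\mathcal{H}_{\mathrm{L}}}^2=\|\mathrm{L}f_j\|_{L_2}^2\le C/\lambda$ is bounded, so the continuity of each $\nu_m\in\mathcal{H}_{\mathrm{L}}'$ keeps $\langle\boldsymbol\nu,q_j\rangle$ bounded in $\mathbb{R}^M$. On the finite-dimensional space $\mathcal{N}_{\mathrm{L}}$ the linear map $p\mapsto\langle\boldsymbol\nu,p\rangle$ is injective (again by $\mathcal{N}_{\mathrm{L}}\cap\mathcal{N}_{\boldsymbol\nu}=\{0\}$), hence bounded below by $c\|p\|_{L_2}$ for some $c>0$; therefore $\|\langle\boldsymbol\nu,f_j\rangle\|\ge c\|p_j\|_{L_2}-\|\langle\boldsymbol\nu,q_j\rangle\|\to\infty$, and coercivity of $\mathbf{z}\mapsto F(\mathbf{y},\mathbf{z})$ gives $\phi(f_j)\ge F(\mathbf{y},\langle\boldsymbol\nu,f_j\rangle)\to\infty$, the desired contradiction.

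The main obstacle is the coercivity step: the idea is to split $f$ into its finite-dimensional null-space component and the remainder, observe that $\mathcal{H}_{\mathrm{L}}$-boundedness of the remainder already controls its measurements (because the $\nu_m$ are continuous on $\mathcal{H}_{\mathrm{L}}$), and then upgrade the injectivity hypothesis to a genuine coercive estimate on the finite-dimensional piece. A secondary subtlety worth flagging is that this uses coercivity of $F(\mathbf{y},\cdot)$ in its second argument; this is automatic for the quadratic cost~\eqref{eq: quadratic F} but does not follow from strict convexity and continuity alone, so it should be read as part of what is meant by a cost function.
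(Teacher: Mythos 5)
Your argument is correct and takes essentially the same route as the paper: the same ingredients for strict convexity (strictness of $F(\mathbf{y},\cdot)$ and of $\|\cdot\|_{L_2}^2$ combined with $\mathcal{N}_{\mathrm{L}}\cap\mathcal{N}_{\boldsymbol\nu}=\{0\}$, organized as a case split rather than the paper's ``equality forces $f=g$'' phrasing), and for coercivity the same decomposition of $f$ into null-space part and remainder, continuity of $\boldsymbol\nu$, the finite-dimensional injectivity bound on $\mathcal{N}_{\mathrm{L}}$, and coercivity of $F(\mathbf{y},\cdot)$, with your contradiction argument being just the contrapositive of the paper's direct estimate. Your closing remark is apt: the paper's proof likewise invokes coercivity of $F$ even though it is not literally implied by strict convexity and continuity, so it is indeed an implicit part of what the cost function is assumed to satisfy.
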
 

\begin{proof}
\textit{Strict convexity:} $\phi$ is convex as a sum of two convex functions. For the strict convexity, we fix $\mu \in  (0,1)$ and ${f,g \in \mathcal{H}_{\mathrm{L}}}$. It is then sufficient to show that the equality 
\begin{equation} \label{eq:equalityconvexity}
 \phi( \mu f + (1-\mu) g ) = \mu \phi(f) + (1-\mu) \phi(g)
\end{equation}
implies that $f=g$. The functions $F( \mathbf{y} , \boldsymbol{\nu} \{ \cdot \} )$ and $\lVert \mathrm{L} \cdot \rVert_{L_2}$ are convex, therefore~\eqref{eq:equalityconvexity} together with the linearity of both $\boldsymbol{\nu}$ and $\mathrm{L}$ implies the two relations
\small
\begin{align}
	F( \mathbf{y} , \mu \nu(f) + (1-\mu) \nu(g) ) &=  \mu 	F( \mathbf{y} ,\nu(f)  )  + (1 - \mu) 	F( \mathbf{y} ,  \nu(g) ) \nonumber \\
	\lVert \mu \mathrm{L} f + (1-\mu) \mathrm{L} g \rVert_{L_2}^2 &= \mu \lVert \mathrm{L} f \rVert_{L_2}^2 + (1 - \mu ) \lVert \mathrm{L} g \rVert_{L_2}^2.
\end{align}
\normalsize
Now, taking advantage of the strict convexity of $F(\mathbf{y}, \cdot)$ and $\lVert \cdot \rVert_{L_2}^2$, we deduce that $\boldsymbol{\nu}(f) = \boldsymbol{\nu}(g)$ and $\mathrm{L} f = \mathrm{L} g$. This means, in particular, that $(f-g)$ is in the intersection of the null spaces of $\boldsymbol{\nu}$ and $\mathrm{L}$, assumed to be trivial. Finally, $f=g$ as expected.
\smallskip

\textit{Coercivity:}
The measurement functional $\boldsymbol{\nu}$ is linear and continuous, hence there exists $A>0$ such that ${\lVert \langle \nu , f \rangle \rvert^2 \leq A \lVert f \rVert_{\mathcal{H}_{\mathrm{L}}}^2}$ for any $f \in \mathcal{H}_{\mathrm{L}}$. Moreover, since $\boldsymbol{\nu}$ is injective and linear when restricted to the finite-dimensional null space $\mathcal{N}_{\mathrm{L}}$, there exists $B>0$ such that $\lVert \langle \nu ,p \rangle \rVert^2 \geq B \lVert p \rVert_{\mathcal{H}_{\mathrm{L}}}^2$ for any $p\in\mathcal{N}_{\mathrm{L}}$. Any $f\in \mathcal{H}_{\mathrm{L}}$ can be decomposed uniquely as 
\begin{equation}
f = \sum_{k\in\mathcal{K}_{\mathrm{L}}} \widehat{f}[k] e_k + \sum_{n=1}^{N_0} \widehat{f}[k_n] e_{k_n}= g + p.
\end{equation}
In that case, we easily see that $\lVert g \rVert_{\mathcal{H}_{\mathrm{L}}} = \lVert \mathrm{L} f \rVert_{L_2}$. In particular, we deduce that
\begin{align} \label{eq:fwithLnu}
	\lVert f \rVert_{\mathcal{H}_{\mathrm{L}}}^2  & = \lVert g \rVert_{\mathcal{H}_{\mathrm{L}}}^2  + \lVert p\rVert_{\mathcal{H}_{\mathrm{L}}}^2  \leq \lVert \mathrm{L} f \rVert_{L_2}^2 + \frac{1}{B} \lVert \langle \boldsymbol{\nu}, p \rangle \rVert^2 \nonumber\\
	&\leq  \lVert \mathrm{L} f \rVert_{L_2}^2 + \frac{1}{B} \left( \lVert \langle \boldsymbol{\nu}, f \rangle \rVert + \lVert \langle \boldsymbol{\nu}, g \rangle \rVert \right)^2 \nonumber\\
	&\leq  \lVert \mathrm{L} f \rVert_{L_2}^2 + \frac{1}{B} \left( \lVert \langle \boldsymbol{\nu}, f \rangle \rVert + A^{1/2} \lVert \mathrm{L} f \rVert_{L_2} \right)^2 \nonumber \\
	& \leq C \left( \lVert \mathrm{L} f \rVert_{L_2}^2 +  \lVert \langle \boldsymbol{\nu}, f \rangle \rVert^2\right)
\end{align}
for $C>0$ large enough.
Now, consider a sequence of functions $f_m \in \mathcal{H}_{\mathrm{L}}$ such that $	\lVert f \rVert_{\mathcal{H}_{\mathrm{L}}} \rightarrow \infty$. We want to show that, for $m$ large enough, $\phi(f_m)$ is arbitrarily large. Due to~\eqref{eq:fwithLnu}, for $m$ large enough, $\lVert \mathrm{L} f_m \rVert_{L_2}$ or $\lVert \langle \boldsymbol{\nu}, f_m \rangle \rVert$ are arbitrarily large. The former implies obviously that $\phi(f_m)$ can be made as large as we want. It is also true for the latter because ${\phi(f_m) \geq F(\mathbf{y}, \langle \boldsymbol{\nu} , f_m\rangle)}$ and $F$ is coercive. 
This means that $\phi(f_m)$ goes to infinity when $m\rightarrow \infty$, hence $\phi$ is coercive. 
\end{proof}

\noindent As $\phi$ is a strictly convex and coercive functional (Lemma~\ref{lemma:convcoerc}), the optimization problem~\eqref{eq: periodic representer thm} has the unique solution $f_{\mathrm{RT}}$. We denote $z_0=\langle \boldsymbol\nu , f_{\mathrm{RT}}\rangle$. The function $f_{\mathrm{RT}}$ can be uniquely decomposed as 
\begin{equation}
f_{\mathrm{RT}} = \sum_{k\in \mathcal{K}_{\mathrm{L}}} \widehat{f}_{\mathrm{RT}}[k] e_k + \sum_{n=1}^{N_0} \widehat{f}_{\mathrm{RT}}[k_n] e_{k_n} = g_{\mathrm{RT}} + p_{\mathrm{RT}}.
\end{equation}

We recall the abstract representer theorem. This result can be found in~\cite[Theorem 8]{gupta2018continuous} with a formulation close to ours.
\begin{prop} \label{prop:ART}
Let $\mathcal{H}$ be a Hilbert space, $\bm{\nu}=  (\nu_1, \ldots , \nu_M)$ be a vector of $M$ linear and continuous measurement functionals over $\mathcal{H}$, and $\mathbf{y}_0 \in \mathbb{R}^M$. There exists a unique minimizer of the optimization problem
\begin{equation}
\min_{f \in \mathcal{H}} \lVert f \rVert_\mathcal{H} \ \text{s.t.} \ \bm{\nu} = \mathbf{y
}_0,
\end{equation}
which is of the form $f_{\mathrm{opt}} = \sum\limits_{m=1}^M a_m \mathrm{R} \nu_m$, where $a_m \in \mathbb{R}$ and $\mathrm{R} : \mathcal{H}' \rightarrow \mathcal{H}$ is the Riesz map of $\mathcal{H}$.
\end{prop}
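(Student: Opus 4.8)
The plan is to recast the constrained problem as a metric-projection problem in $\mathcal{H}$ and to exploit the defining property $\langle \mathrm{R} g, f \rangle_{\mathcal{H}} = \langle g, f \rangle$ of the Riesz map. First I would introduce the finite-dimensional (hence closed) subspace $V = \mathrm{span}\{\mathrm{R}\nu_1, \ldots, \mathrm{R}\nu_M\} \subseteq \mathcal{H}$ and the null space $\mathcal{N}_{\bm{\nu}} = \{f \in \mathcal{H} \ | \ \bm{\nu}(f) = \mathbf{0}\}$. The crucial observation is the orthogonality relation $\mathcal{N}_{\bm{\nu}} = V^{\perp}$: for each $m$ one has $\langle \nu_m, f \rangle = \langle \mathrm{R}\nu_m, f \rangle_{\mathcal{H}}$, so $\bm{\nu}(f) = \mathbf{0}$ is equivalent to $f$ being orthogonal to every $\mathrm{R}\nu_m$, that is, to $f \in V^{\perp}$. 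Taking orthogonal complements and using that $V$ is closed, this also gives $V = \mathcal{N}_{\bm{\nu}}^{\perp}$.

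Next I would handle existence and uniqueness. The feasible set $\mathcal{C} = \{f \in \mathcal{H} \ | \ \bm{\nu}(f) = \mathbf{y}_0\}$ is the preimage of a point under the continuous linear map $\bm{\nu}$, hence a closed affine subset of $\mathcal{H}$; it is nonempty exactly when $\mathbf{y}_0$ belongs to the range of $\bm{\nu}$, which I would assume (this holds in particular whenever the $\nu_m$ are linearly independent, and is implicit in the statement). A nonempty closed convex subset of a Hilbert space contains a unique element of minimal norm — the metric projection of $0$ onto $\mathcal{C}$ — so the minimizer $f_{\mathrm{opt}}$ exists and is unique.

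For the representer form, I would decompose an arbitrary feasible $f$ as $f = f_V + f_{\perp}$ with $f_V \in V$ and $f_{\perp} \in V^{\perp} = \mathcal{N}_{\bm{\nu}}$. Then $\bm{\nu}(f) = \bm{\nu}(f_V)$, so $f_V$ is again feasible, while the Pythagorean identity gives $\lVert f \rVert_{\mathcal{H}}^2 = \lVert f_V \rVert_{\mathcal{H}}^2 + \lVert f_{\perp} \rVert_{\mathcal{H}}^2 \geq \lVert f_V \rVert_{\mathcal{H}}^2$, with equality if and only if $f_{\perp} = 0$. Hence the minimizer lies in $V$, i.e. $f_{\mathrm{opt}} = \sum_{m=1}^M a_m \mathrm{R}\nu_m$ for some $a_m \in \mathbb{R}$; when the $\nu_m$ are linearly independent these coefficients are themselves unique and solve the Gram system $\mathbf{G}\mathbf{a} = \mathbf{y}_0$ with $[\mathbf{G}]_{m,m'} = \langle \nu_m, \mathrm{R}\nu_{m'} \rangle$. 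The argument is essentially a textbook application of the projection theorem; the only points requiring care are the feasibility assumption on $\mathbf{y}_0$ and, if one wants the coefficients $a_m$ to be unique, the linear independence of the measurement functionals — the identification $\mathcal{N}_{\bm{\nu}} = V^{\perp}$ does the rest.
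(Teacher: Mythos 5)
Your proof is correct. Note that the paper itself does not prove Proposition~\ref{prop:ART}: it simply recalls the abstract representer theorem and cites~\cite[Theorem 8]{gupta2018continuous}, so there is no internal proof to compare against. Your argument --- identify $\mathcal{N}_{\bm{\nu}}$ with $V^{\perp}$ for $V=\mathrm{span}\{\mathrm{R}\nu_1,\ldots,\mathrm{R}\nu_M\}$ via the defining property of the Riesz map, get existence and uniqueness from the projection theorem on the closed affine feasible set, and then use the orthogonal decomposition $f=f_V+f_{\perp}$ with the Pythagorean identity to force the minimizer into $V$ --- is the standard textbook proof of exactly the result being invoked, so it correctly fills in what the paper leaves to the reference. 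You were also right to flag the implicit feasibility assumption (that $\mathbf{y}_0$ lies in the range of $\bm{\nu}$), which the statement glosses over; in the paper's application it is automatic, since $g_{\mathrm{RT}}$ itself satisfies the constraint $\bm{\nu}(h)=\mathbf{z}_0-\bm{\nu}(p_{\mathrm{RT}})$. The remark on uniqueness of the coefficients $a_m$ under linear independence (via the Gram system) is extra but harmless, as the proposition only claims uniqueness of the minimizer, not of its representation.
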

We consider the Hilbert space ${\widetilde{\mathcal{H}}_{\mathrm{L}} = \{ f \in \mathcal{H}_{\mathrm{L}}, \ \mathrm{Proj}_{\mathcal{N}_{\mathrm{L}}} \{f\} = 0}\}$, on which $\rVert \mathrm{L} f \rVert_{L_2}$ is a Hilbertian norm. The linear measurements $\nu_m$ are in the dual space $\widetilde{\mathcal{H}}_{\mathrm{L}}'$, once restricted as linear functionals on $\widetilde{\mathcal{H}}_{\mathrm{L}}$. The interpolation constraint is chosen as $\mathbf{y}_0 = \mathbf{z}_0 - \bm{\nu}(p_{\mathrm{RT}})$. 
Applying Proposition~\ref{prop:ART} to this case, we deduce that there exists a unique minimizer
\begin{equation}
h_{\mathrm{opt}} = \underset{h \in \widetilde{\mathcal{H}}_{\mathrm{L}}, \boldsymbol{\nu} (h) = \mathbf{y}_0}{\arg \min} \lVert  \mathrm{L} h \rVert_{L_2}
\end{equation}
which is of the form $h_{\mathrm{opt}} = \sum_{m=1}^M a_m \mathrm{R} \nu_m$, 
$\mathrm{R}$ being the Riesz map between $\widetilde{\mathcal{H}}_{\mathrm{L}}'$ and $\widetilde{\mathcal{H}}_{\mathrm{L}}$. In our case, the function $\mathrm{R}\nu_m$ is given by $\mathrm{R}\nu_m = \sum_{k \in \mathcal{K}_{\mathrm{L}}} \frac{\widehat{\nu}_m[k]}{\lvert \widehat{L}[k] \rvert^2} e_k$. In particular, one easily sees from the expression of $\varphi_m$ that it satisfies
\begin{equation}
\mathrm{R}\nu_m = \varphi_m - \gamma^2 \mathrm{Proj}_{\mathcal{N}_{\mathrm{L}}} \{\nu_m\}.
\end{equation}

Moreover, we have that $h_{\mathrm{opt}} = g_{\mathrm{RT}}$. Indeed, $g_{\mathrm{RT}}$ is clearly among the functions $h$ over which one minimizes and one cannot have that $\lVert  \mathrm{L} h_{\mathrm{opt}} \rVert_{L_2} < \lVert  \mathrm{L} g_{\mathrm{RT}} \rVert_{L_2}$ (otherwise, the function ${f  = h_{\mathrm{opt}} + p_{\mathrm{RT}}}$ would be a minimizer of~\eqref{eq: periodic representer thm} different from $f_{\mathrm{RT}}$, which is impossible).
Putting things together, we get that
\begin{align}
	f_{\mathrm{RT}} &= g_{\mathrm{RT}} + p_{\mathrm{RT}}  = \sum_{m=1}^M a_m \mathrm{R}\nu_m + p_{\mathrm{RT}} \nonumber  \\
		&= \sum_{m=1}^M a_m \varphi_m - \gamma^2  \sum_{m=1}^M a_m \mathrm{Proj}_{\mathcal{N}_{\mathrm{L}}} \{\nu_m\}+ p_{\mathrm{RT}}.
\end{align}
Since $(- \gamma^2  \sum_{m=1}^M a_m \mathrm{Proj}_{\mathcal{N}_{\mathrm{L}}} \{\nu_m\}+ p_{\mathrm{RT}})$ is in the null space of $\mathrm{L}$, it can be developed as $\sum_{n=1}^{N_0} b_n e_{k_n}$, giving~\eqref{eq:formsolutionRT}.

The last ingredient is to remark that $a_m$ satisfies $\mathbf{P}^{\mathsf{T}} \mathbf{a} = \mathbf{0}$. This comes from the fact that, by construction, ${\sum a_m \mathrm{R}\nu_m  \in \widetilde{\mathcal{H}}_{\mathrm{L}}'}$ and, by applying the Riesz map, ${\sum a_m \nu_m \in \widetilde{\mathcal{H}}_{\mathrm{L}}}$, meaning that the projection of this element into the null space is zero. This is precisely equivalent with the expected condition.

\subsection{Proof of Proposition~\ref{prop: interpolation pb}}\label{app: interpolation pb}
We compute~\eqref{eq: periodic representer thm} for $F$ the quadratic cost function. We have that $f_{\mathrm{RT}} = \sum_{m=1}^{M} a_m\varphi_m + \sum_{n=1}^{N_0} b_n e_{k_n}$, as given by~\eqref{eq:formsolutionRT}.
It then suffices to find the optimal vectors $\mathbf{a}$ and $\mathbf{b}$. We therefore rewrite~\eqref{eq: periodic representer thm} in terms of these two vectors.

From simple computations, we have, with the notations of Proposition~\ref{prop: interpolation pb}, that $\langle \boldsymbol{\nu} ,  \sum_{n=1}^{N_0} b_n e_{k_n} \rangle = \mathbf{P} \mathbf{b} $ and 
$\langle \boldsymbol{\nu} , \sum_{m=1}^M a_m \varphi_m \rangle = \mathbf{G} \mathbf{a},$
where we used for the latter that $\mathbf{G}_{m_1,m_2} = \langle \nu_{m_1}, h_\gamma * \nu_{m_2} \rangle = \langle \nu_{m_1} , \varphi_{m_2} \rangle$.
Hence, 
\begin{equation}
\lVert \mathbf{y} - \langle \boldsymbol{\nu} , f \rangle \rVert ^2= \lVert \mathbf{y} - \mathbf{G} \mathbf{a} - \mathbf{P} \mathbf{b} \rVert^2.
\end{equation}

From the definition of $h_{\gamma}$ in~\eqref{eq: reproducing kernel}, we see that ${(\mathrm{L}^* \mathrm{L} h_{\gamma}) * f = f}$ for every $f$ whose Fourier coefficients $\widehat{f}[k_n]$ do vanish for every $n=1\ldots N_0$.
Now, the relation $\overline{\mathbf{P}}^{\mathsf{T}} \mathbf{a} = \mathbf{0}$ in Theorem~\ref{th: periodic representer thm} shows precisely that $\sum_{n=1}^M a_m \nu_m$ satisfies this property. 
In particular, we deduce that
\begin{equation}
\mathrm{L}^* \mathrm{L} \left\{\sum_{m=1}^M a_m \varphi_m\right\} = (\mathrm{L}^* \mathrm{L} h_{\gamma}) * \sum_{m=1}^{M} a_m \nu_m = \sum_{m=1}^{M}a_m \nu_m.
\end{equation}
As a consequence, we have that
\begin{align}
\lVert \mathrm{L} f_{\mathrm{RT}} \rVert^2_{L_2} &=   \langle \mathrm{L}^* \mathrm{L} \sum_{m_1=1}^M a_{m_1} \varphi_{m_1}, \sum_{m_2=1}^M a_{m_2} \varphi_{m_2} \rangle \nonumber \\ &=\sum_{m_1=1}^M \sum_{m_2=1}^M a_{m_1} \mathbf{G}_{m_1,m_2} a_{m_2} = ( \mathbf{G} \mathbf{a})^{\mathsf{T}} \mathbf{a}.
\end{align}
Finally, one has that 
\begin{equation}
\lVert \mathbf{y} - \langle \boldsymbol{\nu}, f_{\mathrm{RT}} \rangle \rVert^2 + \lambda \lVert \mathrm{L}f_{\mathrm{RT}} \rVert_{L_2}^2 = \lVert \mathbf{y} - \mathbf{G} \mathbf{a} - \mathbf{P} \mathbf{b} \rVert^2  + \lambda  ( \mathbf{G} \mathbf{a})^{\mathsf{T}} \mathbf{a}.
\end{equation}

\noindent By computing the partial derivatives, we find that the vectors $\mathbf{a}$ and $\mathbf{b}$ are given by~\eqref{eq:matrixform}.

\subsection{Proof of Proposition~\ref{prop: Spline}}\label{app: Spline}
Since $\nu_m = \Sha(\cdot - t_m)$, the form of the solution~\eqref{eq:formsolutionRT} is 
$f_{\mathrm{RT}}(t)=\sum\limits_{m=1}^M a_m h_{\gamma}(t-t_m)+\sum\limits_{n=1}^{N_0}b_n e_{k_n}(t)$. We have moreover that
 $\mathbf{P}^{\mathsf{T}} \mathbf{a} = \mathbf{0}$, where ${[\mathbf{P}]_{m,n} =\mathrm{e}^{\mathrm{j} 2\pi k_n t_m}}$. 
From~\eqref{eq: reproducing kernel}, we then deduce that 
$\mathrm{L}^{\ast}\mathrm{L} \{h_{\gamma}\}(t)  = \sum_{k \in \mathcal{K}_{\mathrm{L}}} \lvert \widehat{L}[k] \rvert^2 \frac{e_k(t)}{\lvert \widehat{L}[k] \rvert^2}  = \bigg (\Sha(t) - \mathrm{Proj}_{\mathcal{N}_{\mathrm{L}}} \{\Sha\} (t)\bigg ).$
By linearity, we get that
\begin{small}
\begin{align}
&\mathrm{L}^{\ast}\mathrm{L} \{f_{\mathrm{RT}}\}(t)
=
\sum_{m=1}^M a_m \mathrm{L}^{\ast}\mathrm{L} \{h_{\gamma}\}(t - t_m)  \nonumber
 \\
&= 
\sum_{m=1}^M a_m \Sha(t - t_m)    - 
\sum_{m=1}^M a_m \mathrm{Proj}_{\mathcal{N}_{\mathrm{L}}} \{\Sha( \cdot - t_m) \} (t) \nonumber \\
&= 
\sum_{m=1}^M a_m \Sha(t - t_m)    -  \sum_{n=1}^{N_0} \sum_{m=1}^M a_m \mathrm{e}^{-\mathrm{j} 2\pi k t_m}  e_{k_n} \nonumber \\
&= \sum_{m=1}^M a_m \Sha(t - t_m) - \sum_{n=1}^{N_0} [\overline{\mathbf{P}}^\mathsf{T}  \mathbf{a}]_n e_{k_n}   \label{eq:plouf1} \\
&= \sum_{m=1}^M a_m \Sha(t - t_m), \label{eq:plouf2}
\end{align}
\end{small}
\noindent where we used that $[\overline{\mathbf{P}}]_{m,n} = \mathrm{e}^{-\mathrm{j} 2\pi k t_m}$ in~\eqref{eq:plouf1} and that $\overline{\mathbf{P}}^\mathsf{T}  \mathbf{a} = \overline{\mathbf{P}^\mathsf{T} \mathbf{a}} = \mathbf{0}$ in~\eqref{eq:plouf2}.
Finally, $f_{\mathrm{RT}}$ is a periodic $(\mathrm{L}^*\mathrm{L})$-spline with weights $a_m$ and knots $t_m$. 

\subsection{Proof of Proposition~\ref{prop:covariancebridge}} \label{app:covariance}
We start from 
\begin{equation}\label{eq: formula of s}
s =\sum\limits_{k \in \mathcal{K}_{\mathrm{L}}} \frac{\widehat{w}[k]}{\widehat{L}[k] }e_k  + \sum\limits_{n=1}^{N_0} \frac{\widehat{w}[k_n]}{\gamma_0}e_{k_n}.
\end{equation}
Our goal is to compute $r_s(t,\tau) = \mathbb{E}[s(t)s(\tau)]$. We do so by replacing $s(t)$ and $s(\tau)$ with~\eqref{eq: formula of s}. We develop the product and use the relations 
$\mathbb{E} [\widehat{w}[k] \widehat{w}[\ell] ] = \mathbb{E}[ \widehat{w}[k]^2 ] = 0$, $\mathbb{E}[\lvert\widehat{w} [k] \rvert^2] = 1$ for every $k,  \ell \in \mathbb{Z}$, $k\neq \ell$ to deduce that
\begin{equation}
r_s(t,\tau) =\left( \sum_{k\in\mathcal{K}_{\mathrm{L}}} \frac{e_k(t) e_{-k}(\tau)}{\lvert \widehat{L}[k]\rvert^2}  + \frac{1}{\gamma_0^2}\sum_{n=1}^{N_0} e_{k_n}(t) e_{-k_n} (\tau) \right).
\end{equation}
Since $e_k(t) e_{-k}(\tau)=e_k(t-\tau)$, we have shown that ${r_s(t,\tau) = h_{\gamma}(t-\tau)}$, as expected. Then, we obtain~\eqref{eq: esperance GB} by injecting~\eqref{eq: covariance GB} into~\eqref{eq: esp Gaussian process}. Finally, we obtain~\eqref{eq:variancesk} by particularizing~\eqref{eq: esperance GB} with $\nu_m = e_k$. 

\subsection{Proof of Theorem~\ref{th: MMSE}}\label{app: MMSE}
We fix a time $t_0\in \mathbb{T}$. We first obtain the MMSE estimator for $s(t_0)$ (estimation of $s$ at time $t_0$). (Note that ${s(t_0) = \langle s , \Sha(\cdot - t_0)\rangle}$ is well defined because $\Sha (\cdot - t_0) \in \mathcal{H}_{\mathrm{L}}$ by assumption).

The linear MMSE estimator of $s(t_0)$ based on $\mathbf{y}$ is of the form $\tilde{s}_{t_0}=\sum\limits_{m=1}^M u_m y_m$. Because s and $\epsilon$ are Gaussian, the linear MMSE estimator coincides with the MMSE estimator~\cite{Moon2000mathematical}. The orthogonality principle [Section 3.2]~\cite{Moon2000mathematical} then implies that 
\begin{equation}\label{eq: 1 proof MMSE}
\mathbb{E}[y_m(s(t_0)-\tilde{s}_{t_0})]=0, \ \forall m=1 \ldots M.
\end{equation}
We know from Proposition~\ref{prop:covariancebridge} that $\mathbb{E} [\langle s , f \rangle \langle s , g \rangle ] = \langle  h_{\gamma_0} \ast f,g\rangle$. We use this relation to develop the different terms of~\eqref{eq: 1 proof MMSE}. First, we have that
\begin{small}
\begin{align}\label{eq: 2 proof MMSE}
\mathbb{E}[y_m s(t_0)]&=\mathbb{E}[\langle \nu_m , s \rangle s(t_0)] + \mathbb{E}[\epsilon_m s(t_0)] \nonumber \\
%&=C_s(\nu_m, \Sha(\cdot -t_0)) + \underbrace{\mathbb{E}[\epsilon_m]}_{=0}\mathbb{E}[ s(t_0)] \nonumber \\
&=\mathbb{E} [\langle  \nu_m , s \rangle \langle s , \Sha(\cdot-t_0) \rangle ]  + \underbrace{\mathbb{E}[\epsilon_m]}_{0}\mathbb{E}[ s(t_0)] \nonumber \\
&=(h_{\gamma_0} \ast \nu_m) (t_0) .
\end{align}
\end{small}
As the estimator is of the form $\tilde{s}_{t_0} = \sum\limits_{m=1}^{M}u_m y_m$ and exploiting that $\epsilon$ and $s$ are independent, we have that
\begin{small}
\begin{align}
	 \mathbb{E}[\langle \nu_m , s \rangle y_k]  &=  \mathbb{E}[\langle \nu_m , s \rangle \langle \nu_k, s \rangle]  +  \mathbb{E}[\langle \nu_m , s \rangle \epsilon_k]  = \langle h_{\gamma_0} \ast \nu_m, \nu_k \rangle \nonumber\\
	  \mathbb{E}[\epsilon_m y_k] &=  \mathbb{E}[\epsilon_m \langle \nu_k, s \rangle] +  \mathbb{E}[\epsilon_m \epsilon_k] = \sigma^2 \delta[m-k]
\end{align}
\end{small}
We have therefore that
\begin{small}
\begin{align}\label{eq: 3 proof MMSE}
 \mathbb{E}[y_m \tilde{s}_{t_0}]&= \mathbb{E}[\langle \nu_m , s \rangle \tilde{s}_{t_0}]+ \mathbb{E}[\epsilon_m \tilde{s}_{t_0}]\nonumber \\
 &=\sum_{k=1}^{M} u_k  \mathbb{E}[\langle \nu_m , s \rangle y_k] + \sum_{k=1}^{M} u_k  \mathbb{E}[\epsilon_m y_k]\nonumber \\
 %&=\sum_{k=1}^{M} u_k  \mathbb{E}[\langle \nu_m , s \rangle \langle \nu_k , s \rangle] + \sum_{k=1}^{M} u_k  \underbrace{\mathbb{E}[\langle \nu_m , s \rangle \epsilon_k] }_{0}\nonumber \\ 
% & \quad +\sum_{k=1}^{M} u_k  \underbrace{\mathbb{E}[\epsilon_m \langle \nu_k , s \rangle]}_{0}+ \sum_{k=1}^{M} u_k  \underbrace{\mathbb{E}[\epsilon_m \epsilon_k]}_{\sigma_0^2 \delta[m-k]}\nonumber \\
% &=\sum_{k=1}^{M} u_k  C_s(\nu_m, \nu_k) + u_k \mathbb{E}[\epsilon_m \epsilon_m]\nonumber \\
 %&=\sum_{k=1}^{M} u_k  C_s(\nu_m, \nu_k) + u_m \sigma_0^2.
  &=\sum_{k=1}^{M} u_k  \langle  h_{\gamma_0} \ast \nu_m, \nu_k \rangle + u_m \sigma_0^2.
\end{align}
\end{small}
\noindent We remark that $\langle  h_{\gamma_0} \ast \nu_m, \nu_k \rangle=[\mathbf{G}]_{m_1,m_2}$ given in~\eqref{eq:Chgamma}. Injecting~\eqref{eq: 2 proof MMSE} and~\eqref{eq: 3 proof MMSE} into~\eqref{eq: 1 proof MMSE}, we have for $ m=1 \ldots M$ that
$(h_{\gamma_0} \ast \nu_m) (t_0)=\sum_{k=1}^{M} u_k  [\mathbf{G}]_{m_1,m_2} + u_m \sigma_0^2.$ Hence,
$\mathbf{u}
%=(u_1, \ldots ,u_M)
=(\mathbf{G}+ \sigma_0^2 \mathbf{I})^{-1} \mathbf{c},$ where 
$\mathbf{c}
%((h_{\gamma_0} \ast \nu_1) (t_0), \cdots, (h_{\gamma_0} \ast \nu_M) (t_0) )
= (h_{\gamma_0} * \bm{\nu}) (t_0)$. As $\tilde{s}_{t_0}=\mathbf{u}^{\mathsf{T}}\mathbf{y}$, we finally have that ${\tilde{s}_{t_0}=\sum\limits_{m=1}^M d_m(h_{\gamma_0}\ast \nu_m)}(t_0)$, where ${\mathbf{d}=(d_1, \dots ,d_M)=(\mathbf{G}+ \sigma_0^2 \mathbf{I})^{-1}\mathbf{y}}$.

We have now obtained the form of the MMSE estimator $\tilde{s}_{t_0}$ for $s(t_0)$ at a fixed time $t_0$. We then deduce the MMSE estimator of the complete continuous random process $s: \mathbb{T} \rightarrow \mathbb{R}$ that minimizes $\mathbb{E}[\| s-\tilde{s}\|_{L_2}^2]$ among all the estimators $\tilde{s}$ based on $\mathbf{y}$. We fix an estimator $\tilde{s}$. We have that
\begin{small}
\begin{align}
\mathbb{E}[\| s - \tilde{s}\|_{L_2}^2]&=\mathbb{E}[\int_0^1 (s(t)- \tilde{s}(t))^2 \mathrm{d}t] =\int_0^1 \mathbb{E}[(s(t)-\tilde{s}(t))^2]\mathrm{d}t \nonumber\\
&\geq \int_0^1 \mathbb{E}[(s(t)-\tilde{s}_{t})^2 ]\mathrm{d}t =\mathbb{E}[\| s-\tilde{s}_{\mathrm{MMSE}}\|_{L_2}^2].
\end{align}
\end{small}
\noindent Hence, the function $\tilde{s}_{\mathrm{MMSE}}: t \rightarrow \tilde{s}_t$ is the MMSE estimator of the complete process $s(t)$.

\subsection{Proof of Proposition~\ref{prop: opt pb}} \label{sec:proofRTgamma}
The proof is obtained by following the arguments of Theorem~\ref{th: periodic representer thm} (for existence, unicity, and form of the solution) and Proposition~\ref{prop: interpolation pb} (for the explicit formula of the coefficients ${d}_m$ in~\eqref{eq:optigamma}) with the following simplifications:

\noindent First, the existence and unicity of a solution is now direct. Indeed, the functional to minimize is ${\lVert \mathbf{y} - \mathbf{\nu}(f) \rVert_2^2 + \lambda \lVert f \rVert_{\mathcal{H}_{\mathrm{L}}}^2}$. It is clearly coercive and strictly convex because $\lVert \cdot \rVert_{\mathcal{H}_{\mathrm{L}}}$ is.
Second, the abstract representer theorem can now be applied directly to the Hilbert space $\mathcal{H}_{\mathrm{L}}$. The form of the solution is then directly deduced.
Third, the coefficients $d_m$ are found with the arguments of Appendix~\ref{app: interpolation pb}, except that there is no term for the null-space component (coefficients $b_n$) in that case, hence the system matrix is simpler. 

\subsection{Proof of Proposition~\ref{prop:fouriersampling}} \label{app:Fouriersampling}
We know the expression of $\tilde{s}_{\lambda}$ from Proposition~\ref{prop: opt pb}.
For Fourier sampling, the $\varphi_m$ are complex exponential themselves, given by ${\varphi_m = h* e_{k_m} = \widehat{h}[k_m] e_{k_m}}$, 
while the Gram matrix $\mathbf{G}$ is diagonal since ${\mathbf{G}_{m_1,m_2} = \langle h * e_{k_{m_1}}, e_{k_{m_2}} \rangle =  \widehat{h}[k_{m_1}]  \delta[k_{m_1} - k_{m_2}]}$. Hence, ~\eqref{eq:optigamma} gives that
\begin{equation}
	\tilde{s}_{\lambda} = \sum_{m=1}^M \frac{(\widehat{s}[k_m] + \epsilon_m ) \widehat{h}[k_m]}{\widehat{h}[k_m]+ \lambda} e_{k_m}.
\end{equation}
After simplification, we have that
\begin{equation}
	s - \tilde{s}_\lambda =  \sum_{m=1}^M \left(\frac{\lambda \widehat{s}[k_m]  }{\widehat{h}[k_m]+ \lambda} - \frac{\widehat{h}[k_m] \epsilon_m}{\widehat{h}[k_m]+ \lambda}\right)  e_{k_m} + \sum_{k\notin \mathcal{N}_{\bm{\nu}}} \widehat{s}[k] e_k.
\end{equation}
Exploiting the Fourier-domain independence, we deduce that
\begin{align}
	\mathbb{E}\left[ \lVert s - \tilde{s}_\lambda \rVert_{L_2}^2 \right] 
	=
	&\sum_{m=1}^M \frac{\lambda^2}{( \widehat{h}[k_m] + \lambda)^2} \mathbb{E} \left[ \lvert \widehat{s}[k_m] \rvert^2 \right] \nonumber \\
&	\quad \ + \frac{\widehat{h}[k_m]^2}{( \widehat{h}[k_m] + \lambda)^2} \mathbb{E} \left[ \lvert \epsilon_m \rvert^2 \right]  \nonumber \\
	& +  \sum_{k\notin \mathcal{N}_{\bm{\nu}}} \mathbb{E} \left[ \lvert \widehat{s}[k] \rvert^2 \right].
\end{align} 
From the relations $\mathbb{E} \left[ \lvert \widehat{s}[k] \rvert^2 \right] = \widehat{h}[k]$ (see~\eqref{eq:variancesk}) and $\mathbb{E} \left[ \lvert \epsilon_m \rvert^2 \right] = \sigma_0^2$, we finally obtain~\eqref{eq:FouriersamplingNMSE}.

%%%%%%%%%%%%%%%%%%%%%%%%%%%%%%%%%%%%%%%%%%%%%%%%%%%%%%%%%%%%%%%%%%%%%%%%%%%%%%%%%

\bibliographystyle{IEEEtran}
\bibliography{references}

\end{document}